\pdfoutput=1                      
% to make the paper available to visually impaired people

\documentclass[10 pt,a4paper]{article}

\usepackage[accsupp]{axessibility}    
% to make the paper available to visually impaired people

\usepackage{latexsym,amsfonts,amsmath,amsthm,amssymb, graphicx, mathrsfs, fancyhdr, makeidx, amscd,  color, mathdesign, fontenc, stix, hyperref, enumitem, soul}
\usepackage{tocloft}
\setlength{\cftbeforesecskip}{1pt}
%\makeindex
%
% inserire \index{espo@$\exp$} subito prima di un simbolo matematico (esempio, $\exp$) per identificare la pagina dove appare.
%\printindex dove voglio che compaia l'indice
%
%
\usepackage[all]{xy}
\newtheorem{theorem}{Theorem}[section]
\newtheorem{definition}[theorem]{Definition}
\newtheorem{proposition}[theorem]{Proposition}
\newtheorem{lemma}[theorem]{Lemma}
\newtheorem{corollary}[theorem]{Corollary}
\newtheorem{conjecture}[theorem]{Conjecture}
\newtheorem{question}[theorem]{Question}

\numberwithin{equation}{section}
\numberwithin{theorem}{section}

\theoremstyle{definition}
\newtheorem{remark}[theorem]{Remark}
\newtheorem{example}[theorem]{Example}

\addtolength{\textwidth}{1cm}

% -- KX commands --

\DeclareMathOperator{\Lip}{Lip}
\newcommand{\D}{\nabla}
\newcommand{\p}{\partial}
\renewcommand{\H}{\mathcal{H}}
\renewcommand{\P}[1]{{P\big(#1\big)}}
\newcommand{\Ps}[1]{{P(#1)}}
\newcommand{\RR}{\mathbb{R}}
\newcommand{\uu}{\underline{u}}
\newcommand{\ou}{\overline{u}}
\renewcommand{\bar}{\overline}

% -----------------

\begin{document}

%\pagestyle{fancy}
%% i comandi seguenti impediscono la scrittura in maiuscolo
%% dei nomi dei capitoli e dei paragrafi nelle intestazioni
%\renewcommand{\chaptermark}[1]{\markboth{#1}{}}
%\renewcommand{\sectionmark}[1]{\markright{\thesection\ #1}}
%\fancyhf{} % rimuove l'attuale contenuto dell'intestazione
%% e del pi\`e di pagina
%\fancyhead[LE,RO]{\bfseries\thepage}
%\fancyhead[LO]{\bfseries\rightmark}
%\fancyhead[RE]{\bfseries\leftmark}
%\renewcommand{\headrulewidth}{0.5pt}
%\renewcommand{\footrulewidth}{0pt}
%\addtolength{\headheight}{0.5pt} % riserva spazio per la linea
%\fancypagestyle{plain}{%
%\fancyhead{} % ignora, nello stile plain, le intestazioni
%\renewcommand{\headrulewidth}{0pt} % e la linea
%}

\newcommand{\riem}{(M^m, \langle \, , \, \rangle)}
\newcommand{\Hess}{\mathrm{Hess}\, }
\newcommand{\hess}{\mathrm{hess}\, }
\newcommand{\cut}{\mathrm{cut}}
\newcommand{\ind}{\mathrm{ind}}
\newcommand{\ess}{\mathrm{ess}}
\newcommand{\longra}{\longrightarrow}
\newcommand{\eps}{\varepsilon}
\newcommand{\ra}{\rightarrow}
\newcommand{\vol}{\mathrm{vol}}
\newcommand{\di}{\mathrm{d}}
\newcommand{\R}{\mathbb R}
\newcommand{\C}{\mathbb C}
\newcommand{\Z}{\mathbb Z}
\newcommand{\N}{\mathbb N}
\newcommand{\HH}{\mathbb H}
\newcommand{\esse}{\mathbb S}
\newcommand{\bull}{\rule{2.5mm}{2.5mm}\vskip 0.5 truecm}
\newcommand{\binomio}[2]{\genfrac{}{}{0pt}{}{#1}{#2}} %identico ad \atop
\newcommand{\metric}{\langle \, , \, \rangle}
\newcommand{\lip}{\mathrm{Lip}}
\newcommand{\loc}{\mathrm{loc}}
\newcommand{\diver}{\mathrm{div}}
\newcommand{\disp}{\displaystyle}
\newcommand{\rad}{\mathrm{rad}}
\newcommand{\mmetric}{\langle\langle \, , \, \rangle\rangle}
\newcommand{\sn}{\mathrm{sn}}
\newcommand{\cn}{\mathrm{cn}}
\newcommand{\ink}{\mathrm{in}}
\newcommand{\hol}{\mathrm{H\ddot{o}l}}
\newcommand{\capac}{\mathrm{cap}}
\newcommand{\bmo}{\{b <0\}}
\newcommand{\bmuo}{\{b \le 0\}}
\newcommand{\Fk}{\mathcal{F}_k}
\newcommand{\dist}{\mathrm{dist}}
\newcommand{\gr}{\mathcal{G}}
\newcommand{\grh}{\mathcal{G}^{h}}
\newcommand{\sgrh}{\mathscr{G}^h}
\newcommand{\sgr}{\mathscr{G}}
\newcommand{\Ricc}{\mathrm{Ric}}
\newcommand{\Sect}{\mathrm{Sec}}
\newcommand{\roy}{\mathsf{Ro}}
\newcommand{\roypsi}{\mathsf{Ro}_\psi}
\newcommand{\LL}{\mathcal{L}}
\newcommand{\Do}{\mathscr{D}_o}
\newcommand{\Dom}{\mathscr{D}_\Omega}
\newcommand{\BBB}{\mathscr{B}}
\newcommand{\Ric}{\mathrm{Ric}}

\newcommand{\vp}{\varphi}
\renewcommand{\div}[1]{{\mathop{\mathrm div}}\left(#1\right)}
\newcommand{\divphi}[1]{{\mathop{\mathrm div}}\bigl(\vert \nabla #1
\vert^{-1} \varphi(\vert \nabla #1 \vert)\nabla #1   \bigr)}
\newcommand{\nablaphi}[1]{\vert \nabla #1\vert^{-1}
\varphi(\vert \nabla #1 \vert)\nabla #1}
\newcommand{\modnabla}[1]{\vert \nabla #1\vert }
\newcommand{\modnablaphi}[1]{\varphi\bigl(\vert \nabla #1 \vert\bigr)
\vert \nabla #1\vert }
\newcommand{\ds}{\displaystyle}
\newcommand{\cL}{\mathcal{L}}
\newcommand{\essem}{\mathds{S}^m}
\newcommand{\erre}{\mathds{R}}
\newcommand{\errem}{\mathds{R}^m}
\newcommand{\enne}{\mathds{N}}
\newcommand{\acca}{\mathds{H}}

\newcommand{\cvett}{\Gamma(TM)}
\newcommand{\cinf}{C^{\infty}(M)}
\newcommand{\sptg}[1]{T_{#1}M}
\newcommand{\partder}[1]{\frac{\partial}{\partial {#1}}}
\newcommand{\partderf}[2]{\frac{\partial {#1}}{\partial {#2}}}
\newcommand{\ctloc}{(\mathcal{U}, \varphi)}
\newcommand{\fcoord}{x^1, \ldots, x^n}
\newcommand{\ddk}[2]{\delta_{#2}^{#1}}
\newcommand{\christ}{\Gamma_{ij}^k}
\newcommand{\ricc}{\operatorname{Ricc}}
\newcommand{\supp}{\operatorname{supp}}
\newcommand{\sgn}{\operatorname{sgn}}
\newcommand{\rg}{\operatorname{rg}}
\newcommand{\inv}[1]{{#1}^{-1}}
\newcommand{\id}{\operatorname{id}}
\newcommand{\jacobi}[3]{\sq{\sq{#1,#2},#3}+\sq{\sq{#2,#3},#1}+\sq{\sq{#3,#1},#2}=0}
\newcommand{\lie}{\mathfrak{g}}
\newcommand{\rp}{\erre\mathds{P}}
\newcommand{\II}{\operatorname{II}}
\newcommand{\gradh}[1]{\nabla_{H^m}{#1}}
\newcommand{\absh}[1]{{\left|#1\right|_{H^m}}}
\newcommand{\mob}{\mathrm{M\ddot{o}b}}
\newcommand{\mab}{\mathfrak{m\ddot{o}b}}
\newcommand{\foc}{\mathrm{foc}}
\newcommand{\F}{\mathcal{F}}
\newcommand{\Cf}{\mathcal{C}_f}
\newcommand{\cutf}{\mathrm{cut}_{f}}
\newcommand{\Cn}{\mathcal{C}_n}
\newcommand{\cutn}{\mathrm{cut}_{n}}
\newcommand{\Ca}{\mathcal{C}_a}
\newcommand{\cuta}{\mathrm{cut}_{a}}
\newcommand{\cutc}{\mathrm{cut}_c}
\newcommand{\cutcf}{\mathrm{cut}_{cf}}
\newcommand{\rk}{\mathrm{rk}}
\newcommand{\crit}{\mathrm{crit}}
\newcommand{\diam}{\mathrm{diam}}
\newcommand{\haus}{\mathscr{H}}
\newcommand{\po}{\mathrm{po}}
\newcommand{\grg}{\mathcal{G}^{(g)}}
\newcommand{\grj}{\mathcal{G}^{(j)}}
\newcommand{\Sph}{\mathbb{S}}
\newcommand{\BB}{\mathbb{B}}
\newcommand{\EE}{\mathbb{E}}
\newcommand{\Gia}{G^{(a)}}
\newcommand{\Gib}{G^{(b)}}
\newcommand{\tr}{\mathrm{Tr}}
\newcommand{\dou}{\mathrm{(VD)}}
\newcommand{\dous}{\mathrm{(VD)}}
\newcommand{\neuqp}{(\mathrm{NP}_{q,p})}
\newcommand{\neup}{(\mathrm{NP}_{p,p})}
\newcommand{\neuq}{(\mathrm{NP}_{q,q})}
\newcommand{\neuuno}{(\mathrm{NP}_{1,1})}
\newcommand{\neuunop}{(\mathrm{NP}_{1,p})}

\newcommand{\uniho}{\mathrm{(UO)}}
\newcommand{\vcom}{\mathrm{(VC)}}
\newcommand{\pvcom}{\mathrm{(PVC)}}
\newcommand{\tcr}{\textcolor{red}}
\newcommand{\tcb}{\textcolor{blue}}
\newcommand{\tcg}{\textcolor{green}}
\newcommand{\ol}{\overline}
\renewcommand{\ul}{\underline}
\newcommand{\Hp}{\mathscr{H}_p}
\newcommand{\vut}{\{\varrho=t\}}
\newcommand{\vmt}{\{\varrho < t\}}
\newcommand{\vutp}{\{\varrho_p=t\}}
\newcommand{\vmtp}{\{\varrho_p \le t\}}
\newcommand{\vus}{\{\varrho=s\}}
\newcommand{\vms}{\{\varrho \le s\}}
\newcommand{\Ar}{\mathscr{A}}
\newcommand{\VV}{\mathscr{V}}
\newcommand{\metricN}{( \, , \, )}
\newcommand{\Po}{\mathscr{P}}
\newcommand{\Ha}{\mathscr{H}}
\newcommand{\So}{\mathscr{S}}
\newcommand{\Dou}{\mathscr{D}}
\newcommand{\Vo}{\mathscr{V}}
\newcommand{\RD}{\mathscr{R}}
\newcommand{\DM}{\mathcal{DM}}

\newcommand{\blue}[1]{\textcolor{blue}{#1}}
\newcommand{\w}{\mathscr{w}}

\newcommand{\Hm}[1]{\leavevmode{\marginpar{\tiny%
			$\hbox to 0mm{\hspace*{-0.5mm}$\leftarrow$\hss}%
			\vcenter{\vrule depth 0.1mm height 0.1mm width \the\marginparwidth}%
			\hbox to 0mm{\hss$\rightarrow$\hspace*{-0.5mm}}$\\\relax\raggedright
			#1}}}

\author{Luca Benatti \and Luciano Mari \and Marco Rigoli \and Alberto G. Setti \and Kai Xu}

\title{\textbf{Proper solutions of the $1/H$-flow and the Green kernel of the $p$-Laplacian}}

% \title{\textbf{Decay estimates for the Green kernel of the $p$-Laplacian and the $1/H$-flow}}
\date{}
\maketitle

% \scriptsize \begin{center} Dipartimento di Matematica, Universit\`a degl Studi di Torino,\\
% Via Carlo Alberto 10, 10123 Torino (Italy)\\
% E-mail: luciano.mari@unito.it
% \end{center}

% \scriptsize \begin{center} Dipartimento di Matematica,
% Universit\`a
% degli Studi di Milano,\\
% Via Saldini 50, I-20133 Milano (Italy)\\
% E-mail: marco.rigoli55@gmail.com
% \end{center}

% \scriptsize \begin{center} Dipartimento di Scienza e Alta Tecnologia,
% Universit\`a degli Studi dell'Insubria,\\
% Via Valleggio 11, I-22100 Como (Italy)\\
% E-mail: alberto.setti@uninsubria.it
% \end{center}

\normalsize

\begin{abstract}
    We show existence and optimal growth estimate for the weak inverse mean curvature flow issuing from a point, on manifolds with certain curvature and isoperimetric conditions. These theorems imply analogous ones for the flow issuing from relatively compact sets. Some of the results are obtained by proving new decay estimates for the Green kernel of the $p$-Laplacian which fix a gap in the literature. Additionally, we address the convergence of renormalized $p$-capacitary potentials to the inverse mean curvature flow with outer obstacle.
\end{abstract}

\tableofcontents

\section{Introduction}

Since its introduction to prove the Penrose inequality in General Relativity \cite{huiskenilmanen}, the weak formulation of the Inverse Mean Curvature Flow (IMCF) has shown to be a powerful tool in investigating the geometry of complete, noncompact Riemannian manifolds, see for instance \cite{bfm,bray_neves,cho_lai_xu,fogamazzie,huiskenkorber,mccormick,Shi_2016}. Therefore, constructing the flow and estimating its growth in a sharp way under mild geometric requests on the underlying manifold is desirable. In \cite{huiskenilmanen}, G. Huisken and T. Ilmanen were able to produce a weak IMCF in any complete manifold $M$ whenever the latter supports a proper subsolution. While finding such subsolution is easy in the setting of asymptotically flat manifolds, considered in \cite{huiskenilmanen}, it is highly nontrivial in manifolds whose geometry at infinity is controlled in a milder way. The issue of existence of a proper flow was tackled in some works appearing in the past 15 years, \cite{kotschwarni,ajm,Xu_2023_proper}, which will be commented later. In this respect, the purpose of the present paper is to produce general existence results with sharp growth estimates, improving on the aforementioned papers and correcting a gap in the literature.

\subsection{Main theorems}

% The purpose of this paper is to produce weak solutions to the Inverse Mean Curvature Flow (IMCF) with sharp growth estimates, on complete manifolds whose geometry at infinity is mildly controlled.

Henceforth, all manifolds $M$ are assumed to be smooth, complete, connected, noncompact and of dimension $n \ge 2$. We fix a basepoint $o\in M$, and denote $r(x)=d(x,o)$. We adopt the following notations: for a function $u$ we set $E_t(u)=\{u<t\}$,  for short $E_t$ when there is no ambiguity. We denote by $\Ps{E;K}$ the perimeter of a set $E$ in $K$, and write $\Ps{E}=\Ps{E;M}$.

According to \cite{huiskenilmanen}, we say that $u\in\Lip_{\loc}(\Omega)$ is a weak IMCF in a domain $\Omega$ if for each $t\in\RR$ and $K\Subset\Omega$ we have
\[
J_u^K(E_t)\leq J_u^K(F),\qquad\forall\,F\text{ such that }E\Delta F\Subset K,
\]
where the energy $J_u^K$ is defined as
\[
J_u^K(E):=\Ps{E;K}-\int_{E\cap K}|\D u|.
\]
For detailed introductions of the weak IMCF, see \cite{huiskenilmanen} and also \cite[Section 2]{Xu_2023_proper}. An important role in our paper will be played by inverse mean curvature flows issuing from a given point $o \in M$, that is, an IMCF $u$ defined on $M\backslash\{o\}$, such that
    \[
    u(x)-(n-1)\log r(x)\to0\qquad\text{as}\ \ x\to o.
    \] 
Following Huisken, we refer to them as \textit{IMCF cores}. An IMCF core $u$ is called \textit{proper} if $\{u<t\}\Subset M$ for all $t\in\RR$. By the maximum principle in \cite[Theorem 2.2]{huiskenilmanen}, there exists at most one proper IMCF core with pole $o$. 

The following are the main theorems of this work:

\begin{theorem}\label{thm-intro:main_ric_rd}
    Suppose $M$ satisfies $\Ric\geq0$ and a reverse volume doubling condition
    \begin{equation}\label{eq-intro:rd}
        \frac{|B_t(o)|}{|B_s(o)|} \ge C_{\RD} \left( \frac{t}{s}\right)^b,\qquad\forall\,t\geq s>0. \tag{$\text{RD}_b$}
    \end{equation}
    for some $b\in(1,n]$ and $C_{\RD}>0$. Then there exists a unique proper IMCF core $u$ on $M$ with pole $o$, satisfying the growth estimate
    \begin{align}
        & u(x)\geq(n-1)\log r(x)-C\big(n,b,C_{\RD},|B_1(o)|\big),\qquad \forall\,x\in B_1(o), \label{eq-intro:growth_ric_rd}\\
        & u(x)\geq(b-1)\log r(x)-C\big(n,b,C_{\RD},|B_1(o)|\big),\qquad \forall\,x\in M\backslash B_1(o), \label{eq-intro:growth_ric_rd_2}
    \end{align}
    and the global gradient estimate
    \begin{equation}\label{eq_gradu_ricge0}
        |\D u(x)|\leq(n-1)e^{-\frac{u}{n-1}},\qquad\forall\,x\in M\backslash\{o\}.
    \end{equation}
\end{theorem}

\begin{theorem}\label{thm-intro:main_euc_isop}
    Suppose that the Euclidean isoperimetric inequality 
    \begin{equation}\label{eq-intro:euc_isop}
        \Ps{E}\geq c_I|E|^{\frac{n-1}n},\qquad\forall\ E\Subset M
    \end{equation}
holds in $M$ for some constant $c_I>0$. 
Then there exists a unique proper IMCF core $u$ on $M$ with pole $o$, satisfying  the growth estimate
    \begin{equation}\label{eq-intro:growth_euc_isop}
        u(x)\geq(n-1)\log r(x)-C(n,c_I),\qquad \forall\,x\in M\backslash\{o\}.
    \end{equation}
\end{theorem}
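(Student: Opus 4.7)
The plan is to realize the proper IMCF core as the limit, as $p\to 1^+$, of a suitably rescaled logarithm of the Green kernel of the $p$-Laplacian with pole at $o$, in line with the framework on $p$-capacitary potentials and IMCF announced in the abstract. The Euclidean isoperimetric inequality \eqref{eq-intro:euc_isop} implies, via the coarea formula, a Euclidean-type Sobolev inequality on $M$; in particular $(M,g)$ is $p$-nonparabolic for every $p\in(1,n)$, so that the minimal positive Green kernel $G_p$, satisfying $-\Delta_p G_p = \delta_o$ in the distributional sense, exists.

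The technical heart of the proof is a sharp decay estimate
\[
G_p(x) \le C(n,c_I)\, r(x)^{-\frac{n-p}{p-1}} \qquad \forall\, x\in M\setminus\{o\},
\]
with $C(n,c_I)$ uniform in $p$ as $p\to 1^+$. This is the new decay estimate for the $p$-Green kernel flagged in the abstract; I would derive it by a Moser/De Giorgi iteration on the superlevel sets $\{G_p > t\}$, driven by the Euclidean Sobolev inequality, and calibrate the absolute constant against the standard asymptotic $G_p(x)\sim c_n\, r(x)^{-(n-p)/(p-1)}$ at the pole.

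Set
\[
u_p(x) := -\frac{(n-1)(p-1)}{n-p}\,\log\bigl(\alpha_{n,p}\,G_p(x)\bigr),
\]
with $\alpha_{n,p}$ normalized so that $u_p(x) - (n-1)\log r(x) \to 0$ as $x\to o$. The decay estimate then translates into
\[
u_p(x) \ge (n-1)\log r(x) - C(n,c_I) \qquad \forall\, x\in M\setminus\{o\},
\]
with the constant independent of $p$. Letting $p\to 1^+$ and applying the convergence theorem of $p$-capacitary potentials to the weak IMCF (established elsewhere in the paper), together with local Lipschitz compactness and the gradient control coming from the $p$-Laplacian side, one extracts a limit $u$ which is a weak IMCF on $M\setminus\{o\}$, realizes the IMCF-core asymptotic at $o$, and inherits the global lower bound \eqref{eq-intro:growth_euc_isop}. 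That bound itself forces $\{u<t\}\subset B_{\exp((t+C(n,c_I))/(n-1))}(o)$, so $u$ is proper, and uniqueness follows from the maximum principle \cite[Theorem 2.2]{huiskenilmanen}.

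The hard part is the decay estimate for $G_p$ with a constant depending only on $n$ and $c_I$ and stable as $p\to 1^+$: standard Moser iterations typically yield constants that blow up in this limit, and engineering a uniform bound without sacrificing the sharp exponent $(n-p)/(p-1)$ is precisely the ``gap in the literature'' the abstract refers to. Once this estimate is in hand, the rest reduces to the general $p\to 1$ convergence machinery already set up in the paper.
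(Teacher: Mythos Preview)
Your approach has a genuine gap at its technical heart. The uniform-in-$p$ decay estimate
\[
G_p(x) \le C(n,c_I)\, r(x)^{-\frac{n-p}{p-1}}
\]
under only the Euclidean isoperimetric inequality is precisely the open problem stated as Question~\ref{qs-intro:p_growth} in the paper. The Moser iteration you propose is exactly the argument that breaks: the paper proves in Section~\ref{subsec:the_gap} (see the Proposition there) that \emph{no} iteration of the relevant recursive inequality \eqref{sobolev_iteriamola_00} can yield a constant bounded as $p\to 1$. The paper's new decay estimate (Theorem~\ref{theorem-decayGreen2}) circumvents this by a capacity bound plus a Harnack chaining argument, but that route requires volume doubling and a Poincar\'e inequality, neither of which follows from \eqref{eq-intro:euc_isop} alone. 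Indeed, whether $p$-IMCF cores converge to the proper IMCF core under only \eqref{eq-intro:euc_isop} is left open as Question~\ref{qs-intro:p_convergence}.

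The paper's actual proof of Theorem~\ref{thm-intro:main_euc_isop} (given in Section~\ref{sec:growth}) abandons the $p$-Laplace route entirely. Instead, for each small $\rho>0$ it invokes \cite{Xu_2023_proper} to obtain the proper weak IMCF $\tilde u_\rho$ starting from $B_\rho(o)$, sets $u_\rho=\tilde u_\rho+(n-1)\log\rho$, and proves a sharp diameter bound on the level sets $E_t(u_\rho)$ directly via the isoperimetric inequality and an excess inequality for the weak IMCF (Claim~3 in the proof). This IMCF-theoretic diameter estimate, not a Green kernel decay, is what supplies the uniform lower bound $u_\rho\ge(n-1)\log r-C(n,c_I)$; the limit $\rho\to 0$ then gives the core. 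Your proposal would work for Theorem~\ref{thm-intro:main_ric_rd}, where the paper does use $p$-approximation, but not here.
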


\begin{remark}
Theorem \ref{thm-intro:main_ric_rd} also holds if condition $\Ric \ge 0$ is replaced by a more general Ricci lower bound, provided that a global volume doubling and weak $(1,1)$-Neumann Poincar\'e inequality hold on $M$. In this case, the gradient estimate \ref{eq_gradu_ricge0} changes according to the lower bound on $\Ric$.
%(with the appropriate dependencies on the %constant $C$). 
See the comments on \cite[Theorem 4.6]{ajm} in Subsection \ref{subsec:ajm} for more details.
\end{remark}

As was observed in \cite{ajm}, IMCF cores are useful in solving the usual initial value problem of IMCF. Let $E_0\Subset M$ be a $C^{1,1}$ domain. We say that
\begin{itemize}[topsep=1pt, itemsep=-0.5ex]
    \item $u\in\Lip_{\loc}(M)$ is a \textit{weak IMCF in $M$ with initial condition $E_0$}, if $E_0=\{u<0\}$ and $u$ is a weak solution of the IMCF in $M\backslash\overline{E_0}$;
    \item such a $u$ is called \textit{proper}, if $\{u<t\}\Subset M$ for all $t\geq0$.
\end{itemize}
Combining Theorem \ref{thm-intro:main_ric_rd} or \ref{thm-intro:main_euc_isop} and the standard maximum principle, we have:

\begin{corollary}\label{cor-intro:ivp}
    Suppose the assumptions   of Theorem \ref{thm-intro:main_ric_rd} or \ref{thm-intro:main_euc_isop} hold. Let $E_0\Subset M$ be a $C^{1,1}$ domain. Then there exists a unique proper IMCF $u$ in $M$ with initial condition $E_0$. Moreover:
    \begin{enumerate}[label={(\roman*)}, topsep=1pt, itemsep=-0.5ex]
        \item Under the hypotheses of Theorem \ref{thm-intro:main_ric_rd}, we have
        \begin{equation}\label{eq-intro:lb_doubling}
            u(x)\geq(b-1)\log d(x,x_0)-C,\qquad\forall\,x\in M\backslash E_0.
        \end{equation}
        \item Under the hypotheses of Theorem \ref{thm-intro:main_euc_isop}, we have
        \begin{equation}\label{eq-intro:lb_euc_isop}
            u(x)\geq(n-1)\log d(x,x_0)-C,\qquad\forall\,x\in M\backslash E_0,
        \end{equation}
    \end{enumerate}
    Here, $x_0$ is any point in $E_0$ and $C<\infty$ is a constant independent of $x$.
\end{corollary}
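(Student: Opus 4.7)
The plan is to derive the corollary from the existence of proper IMCF cores supplied by Theorems \ref{thm-intro:main_ric_rd} and \ref{thm-intro:main_euc_isop}, using a translated core based at an interior point of $E_0$ as a lower barrier for the initial value problem, and then invoking the Huisken--Ilmanen maximum principle both for uniqueness and for propagating the growth estimate.

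First I would fix $x_0 \in E_0$ and let $\tilde u$ denote the proper IMCF core at $x_0$ provided by the appropriate main theorem (applied with basepoint $x_0$ in place of $o$). Because $\tilde u$ is continuous on $M \setminus \{x_0\}$ and $\partial E_0$ is compact, the constant $C_0 := \sup_{\partial E_0} \tilde u$ is finite. Since the weak IMCF equation is invariant under additive constants, the shifted function $\tilde u - C_0$ remains a proper weak IMCF on $M \setminus \{x_0\}$, satisfies $\tilde u - C_0 \le 0$ on $\partial E_0$, and tends to $-\infty$ at $x_0$; it is therefore a proper weak subsolution of the initial value problem and, in particular, a candidate lower barrier.

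Next I would construct $u$ by the standard exhaustion procedure of Huisken--Ilmanen (or equivalently as a $p \to 1$ limit of the $p$-capacitary potentials developed elsewhere in the paper), solving on a sequence $\Omega_k \uparrow M$ of smooth relatively compact domains containing $E_0$ and comparing the approximate solution $u_k$ with $\tilde u - C_0$ on $\Omega_k \setminus \overline{E_0}$. The matching boundary data on $\partial E_0$ together with the maximum principle give $u_k \ge \tilde u - C_0$ uniformly in $k$, which provides compactness and rules out escape to infinity of the sublevel sets in the limit. Passing to the limit produces a proper weak IMCF $u$ on $M$ with $\{u<0\}=E_0$ and $u \ge \tilde u - C_0$ on $M \setminus \overline{E_0}$. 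The growth estimates \eqref{eq-intro:lb_doubling} and \eqref{eq-intro:lb_euc_isop} then follow at once from \eqref{eq-intro:growth_ric_rd_2} and \eqref{eq-intro:growth_euc_isop} applied to $\tilde u$, absorbing $C_0$ and the discrepancy between $d(\cdot,o)$ and $d(\cdot,x_0)$ into the constant $C$.

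For uniqueness, let $u_1, u_2$ be two proper weak IMCFs with initial condition $E_0$. For every $t \ge 0$ the sublevel sets $\{u_i < t\}$ are compact, so applying the Huisken--Ilmanen maximum principle \cite[Theorem 2.2]{huiskenilmanen} on the bounded region where the two disagree forces $u_1 \equiv u_2$. The main obstacle in the whole argument is precisely securing properness of the limit: without a proper subsolution on all of $M$, the approximating flows could in principle push their sublevel sets to infinity in finite time. This is exactly where the hypotheses of Theorems \ref{thm-intro:main_ric_rd} and \ref{thm-intro:main_euc_isop} do the genuine work, and once the barrier is in place the remaining steps reduce to routine applications of the comparison principle.
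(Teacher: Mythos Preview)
Your proposal is correct and follows essentially the same route as the paper: take the proper IMCF core $v$ with pole at $x_0\in E_0$ furnished by Theorem~\ref{thm-intro:main_ric_rd} or~\ref{thm-intro:main_euc_isop}, shift it so that it is nonpositive on $\partial E_0$, use it as the proper subsolution required by Huisken--Ilmanen's existence theorem~\cite[Theorem~3.1]{huiskenilmanen}, and then invoke the maximum principle~\cite[Theorem~2.2]{huiskenilmanen} to obtain both $u\ge v-\sup_{E_0}v$ on $M\setminus E_0$ and uniqueness. The paper simply cites~\cite[Theorem~3.1]{huiskenilmanen} rather than spelling out the exhaustion $\Omega_k\uparrow M$ as you do, but the content is the same; one small remark is that the core is already constructed with pole at $x_0$, so its growth estimate is directly in terms of $d(\cdot,x_0)$ and no adjustment between $d(\cdot,o)$ and $d(\cdot,x_0)$ is needed.
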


\begin{proof}
    Let $v$ be the proper IMCF core with pole $x_0$, as given by Theorem \ref{thm-intro:main_ric_rd} or \ref{thm-intro:main_euc_isop}. By Huisken-Ilmanen's existence theorem \cite[Theorem 3.1]{huiskenilmanen} and the maximum principle \cite[Theorem 2.2]{huiskenilmanen}, there is a weak IMCF $u$ on $M$ with initial value $E_0$, such that $u\geq v-\sup_{E_0}(v)$ on $M\backslash E_0$. The corollary then follows from \eqref{eq-intro:growth_ric_rd} or \eqref{eq-intro:growth_euc_isop}.
\end{proof}

We now place the two results above into context.  An efficient way to produce IMCF is by approximation via renormalized $p$-harmonic functions, as first discovered by R. Moser \cite{moser}. For $1<p<\infty$, a function $v\in W^{1,p}_{\loc}$ is called \textit{$p$-harmonic} if 
\[
\Delta_pv:=\operatorname{div}\big(|\D v|^{p-2}\D v\big)=0
\]
weakly. In this case, $w := (1-p)\log v$ is then called a \textit{$p$-IMCF}. Note that $w$ solves
\[
\Delta_pw=|\D w|^p \qquad \text{weakly.}
\]
One of the main insights of R. Moser's discovery in \cite{moser}  is the following: if $\{v_p\}_{p>1}$ is a family of positive $p$-harmonic functions in a domain $\Omega$, such that
\[\lim_{p\to1}(1-p)\log v_p=u\quad\text{in}\quad C^0_{\loc}(\Omega)\]
for some $u\in\Lip_{\loc}(\Omega)$, then $u$ is a weak IMCF in $\Omega$. This idea was exploited by Moser himself \cite{moser} and later by B. Kotschwar and L. Ni \cite{kotschwarni} and L. Mari, M. Rigoli, and A. Setti \cite{ajm} in order to solve the initial value problem of the weak IMCF (see also \cite{cabe,gavi} for the anisotropic setting). The general strategy is as follows. Assume that $M$ is $p$-nonparabolic for each $p>1$ close enough to 1. Given $E_0$, we consider the minimal positive solution of the equation
\[\left\{\begin{aligned}
    & \Delta_pv_p=0\qquad\text{in}\ \ M\backslash E_0, \\
    & v_p=1\qquad\text{on}\ \ \p E_0.
\end{aligned}\right.\]
Since $M$ is $p$-nonparabolic, it follows that $0<v_p<1$ on $M\backslash\bar{E_0}$. Then we perform the transformation
\begin{equation}\label{eq-intro:moser_trans}
    u_p:=(1-p)\log v_p
\end{equation}
(so that $u_p=0$ on $\p E_0$, and $u_p>0$ on $M\backslash\bar{E_0}$), and take the limit
\begin{equation}\label{eq-intro:limiting}
    u=\lim_{p\to1}u_p,
\end{equation}
and finally argue that $u$ is a proper weak IMCF with initial condition $E_0$. Three main steps are involved in this program:
\begin{enumerate}[label={(\arabic*)}, topsep=1pt, itemsep=-0.5ex]
    \item One needs a gradient estimate that is stable as $p\to1$, so that the limit in \eqref{eq-intro:limiting} exists. For this step, see \cite[Theorems 1.1 and 3.1]{kotschwarni}, \cite[Lemma 2.17]{ajm} and \cite{sungwang}.
    \item Then by Moser's observation, the limit $u$ in \eqref{eq-intro:limiting} is a weak IMCF with initial value $E_0$.
    \item One needs a decay estimate for $v_p$ that is stable as $p\to1$, so that $u$ is proper.
\end{enumerate}
The last step is particularly subtle. This requires certain geometric conditions on $M$ at infinity, which often take the form of uniform Sobolev inequalities or volume growth and curvature bounds. 

Since we are also interested in approximating IMCF cores, we consider the following objects:
\begin{itemize}[topsep=1pt, itemsep=-0.5ex]
    \item The \textit{$p$-Green's kernel with pole $o$}, denoted by $\gr_p$, is the unique minimal positive solution of
    \[\Delta_p\gr_p=-\delta_o\]
    on $M$. For $p<n$, we have that $\gr_p$ is asymptotic to
    \[\omega_{n-1}^{\frac1{1-p}}\Big(\frac{p-1}{n-p}\Big)\,r(x)^{\frac{n-p}{1-p}}\]
    near $o$, where $\omega_{n-1}$ is the volume of the unit sphere $\mathbb{S}^{n-1}$; see Appendix \ref{sec:green}. Note that the existence of $\gr_p$ requires $M$ to be $p$-nonparabolic. When there is no risk of ambiguity, we shall often write $\gr$. We refer to Section \ref{sec:prelim} as well as to \cite[Section 2]{ajm} for more background on $p$-Green's kernels.
    \item The \textit{$p$-IMCF core with pole $o$}, denoted by $w_p$, is defined as
    \[w_p:=(1-p)\log\gr_p.\]
    Notice that $w_p$ is a $p$-IMCF in $M\backslash\{o\}$ as in the above definition.
\end{itemize}
% We will always assume $1<p<2$ unless otherwise specified.
%
In the previous work \cite{ajm}, the second to the fourth authors obtained two decay estimates for the Green's kernel $\gr_p$ assuming respectively the conditions of Theorem \ref{thm-intro:main_euc_isop} and \ref{thm-intro:main_ric_rd} (cf. respectively (3.44) and Theorem 3.23 in \cite{ajm}). The corresponding growth estimates for the IMCF core were obtained by taking these estimates as $p\to1$, as stated in \cite[Theorems 1.3 and 1.4]{ajm}. Then, following the proof of Corollary \ref{cor-intro:ivp} above, the usual initial value problem for the IMCF is then resolved (cf. \cite[Theorem 1.7]{ajm}).

These decay estimates for $\gr_p$ essentially boil down to \cite[Theorem 3.6]{ajm}, which states that a weighted Sobolev inequality of the form
\begin{equation}\label{eq-intro:pnu_solobev}
    \Big(\int_M\eta(r)^{-\frac p{\nu-p}}|\varphi|^{\frac{\nu p}{\nu-p}}\Big)^{\frac{\nu-p}\nu}\leq \So_{p,\nu}\int|\D\varphi|^p\qquad\forall\varphi\in\Lip_c(M)
\end{equation}
implies the corresponding growth estimate for the $p$-IMCF core
\begin{equation}\label{eq-intro:wp_growth}
    w_p(x)\geq (\nu-p)\log r(x)-C\big(\So_{p,\nu},p,\nu\big)-\log\eta(2r(x)),\quad\forall\,x\in M\backslash\{o\},
\end{equation}
for all $1<p<\nu$ and all positive non-decreasing weight functions $\eta$.

However, a gap was recently identified in the proof of \eqref{eq-intro:wp_growth}. As a result, one loses the uniform control of the constant $C$ in \eqref{eq-intro:wp_growth} as $p\to1$, and so the properness claim in Step (3) above breaks down. See Subsection \ref{subsec:the_gap} for a detailed account of this issue. One of the outcomes of our main theorems is to recover the properness results in \cite{ajm}. In particular, Theorem \ref{thm-intro:main_ric_rd} fully recovers \cite[Theorem 1.4 and 4.6]{ajm}, Corollary \ref{cor-intro:ivp} fully recovers \cite[Theorem 1.7]{ajm}, and Theorem \ref{thm-intro:main_euc_isop} supersedes \cite[Theorem 1.3]{ajm}. See Subsection \ref{subsec:ajm} for a detailed account. We underline that no curvature lower bound is required in Theorem \ref{thm-intro:main_euc_isop} (although at the cost of losing uniform gradient estimates for $u$). The validity of the decay estimate stated in \cite[Theorem 3.6]{ajm} itself remains open.

%Regarding \cite[Theorem 1.3]{ajm}, 
We note that Theorem \ref{thm-intro:main_euc_isop} proves the optimal growth estimate for the IMCF core, as can be seen by considering the case of Euclidean space. Previously, the existence of a unique proper flow under the assumptions in Theorem \ref{thm-intro:main_euc_isop} was obtained in \cite{Xu_2023_proper}, but the growth estimate was non-optimal.

On the other hand, given the setups in Theorem \ref{thm-intro:main_euc_isop}, it remains open \textit{whether the IMCF core $u$ is the limit of $p$-IMCF cores}. Namely, the following is currently unknown:

\begin{question}\label{qs-intro:p_convergence}
    Suppose $M$ satisfies \eqref{eq-intro:euc_isop}, and for some sequence $p_i\to1$ and $w\in\Lip_{\loc}(M\backslash\{o\})$ we have the convergence
    \[w_{p_i}\to w\quad\text{in}\quad C^0_{\loc}(M\backslash\{o\}),\]
    where $w_{p_i}$ is the $p_i$-IMCF core with pole $o$. Is $w$ equal to the proper IMCF core with pole $o$ (up to additive constants)?
\end{question}

A positive answer of Question \ref{qs-intro:p_convergence} would allow us to fully recover \cite[Theorem 1.3]{ajm}. It is also natural to conjecture the following uniform growth estimate, which would answer Question \ref{qs-intro:p_convergence} affirmatively:

\begin{question}\label{qs-intro:p_growth}
    Suppose $M$ satisfies \eqref{eq-intro:euc_isop}. Is it true that for all $p\in(1,2)$, we have
    \[w_p(x)\geq(n-p)\log r(x)-C(n,c_I)\ \text{?}\]
\end{question}

Note that Question \ref{qs-intro:p_growth} is a special case of \cite[Theorem 3.6]{ajm}.

\subsection{General \texorpdfstring{$p$-harmonic}{p-harmonic} approximation of IMCF}

We include here some discussions on the global and interior approximation of the weak IMCF, which may be of independent interest.

In Theorem \ref{thm-intro:main_euc_isop}\,--\,\ref{cor-intro:ivp} above, we see that weak IMCFs can be approximated by $p$-IMCFs, assuming certain conditions on $M$. This need not be the case, however, if no additional assumption is made (for instance, $M$ could admit a proper IMCF but be $p$-parabolic for all $p>1$). A way to partially get around this issue is to try approximating a weak IMCF in bounded domains. A recent idea of L. Benatti, A. Pluda and M. Pozzetta \cite{BPP24} led to the following approximation theorem:

\begin{theorem}[{\cite[Theorem 2.8]{BPP24}}]\label{thm-intro:BPP}
    Suppose that $M$ is complete, noncompact, and that there exists a proper IMCF $u$ on $M$ with initial value $E_0\Subset M$. Then for all domain $D$ with $E_0\Subset D\Subset M$, there is a family of $p$-IMCFs $u_p$ on $\bar D\backslash E_0$, such that
    \begin{itemize}[topsep=1pt, itemsep=-0.6ex]
        \item $u_p\geq0$ and $u_p|_{\p E_0}=0$,
        \item $\lim_{p\to1}u_p=u$ in $C^0(\overline{D\backslash E_0})$.
    \end{itemize}
\end{theorem}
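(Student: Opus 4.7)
The plan is to solve, for each $p>1$ close to $1$, a Dirichlet problem for the $p$-harmonic equation on $D\setminus\overline{E_0}$ whose boundary data are chosen so that, after Moser's renormalization, the resulting $p$-IMCF agrees with $u$ on $\partial D$, and then to extract a subsequential limit and identify it with $u$ via the comparison principle for weak IMCFs.

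Concretely, I let $v_p$ be the unique $p$-harmonic solution of $\Delta_p v_p=0$ in $D\setminus\overline{E_0}$ with $v_p=1$ on $\partial E_0$ and $v_p=e^{-u/(p-1)}$ on $\partial D$; existence and uniqueness follow by standard variational methods, since these boundary data are continuous and strictly positive. The comparison principle gives $0<v_p\leq 1$, so $u_p:=(1-p)\log v_p$ is a well-defined nonnegative $p$-IMCF on $D\setminus\overline{E_0}$ with $u_p=0$ on $\partial E_0$ and $u_p=u$ on $\partial D$. Taking the constant function $\max_{\partial D}u$ as a supersolution of $\Delta_p w=|\D w|^p$ yields also the uniform $L^\infty$ bound $0\leq u_p\leq \max_{\partial D}u$.

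The heart of the argument is a Lipschitz estimate $|\D u_p|\leq C$ on $\overline{D\setminus E_0}$ that is uniform as $p\to 1$. Interior bounds come from the Kotschwar-Ni/Sung-Wang gradient estimate, which is stable as $p\to 1$ on compact subsets of $D\setminus\overline{E_0}$. Near $\partial E_0$, explicit barriers modeled on the $p$-Green asymptotics, combined with the $C^{1,1}$-regularity of $\p E_0$, give the boundary gradient bound. Near $\partial D$, the idea is to use $u$ itself as a barrier: since $D\Subset M$ and $u$ is a weak IMCF in a neighborhood of $\partial D$, it is locally Lipschitz there, and one-sided comparisons between $u_p$ and perturbations $u\pm\eps$ yield matching boundary gradient control uniformly in $p$. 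Once the equi-Lipschitz estimate is in place, Arzel\`a-Ascoli extracts a subsequence $u_{p_i}\to u_\infty$ in $C^0(\overline{D\setminus E_0})$; Moser's observation identifies $u_\infty$ as a weak IMCF in $D\setminus\overline{E_0}$, with $u_\infty=0$ on $\partial E_0$ and $u_\infty=u$ on $\partial D$; and the comparison principle \cite[Theorem~2.2]{huiskenilmanen} applied to both $(u,u_\infty)$ and $(u_\infty,u)$ forces $u_\infty\equiv u$ on $\overline{D\setminus E_0}$. Uniqueness of the limit then upgrades subsequential convergence to convergence of the entire family.

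The principal obstacle is the uniform gradient estimate up to $\partial D$: no intrinsic curvature or isoperimetric hypothesis on $M$ is assumed, so one cannot invoke global gradient estimates, and the reference solution $u$ is only locally Lipschitz, not $C^1$. One must therefore construct boundary barriers directly from $u$ and glue them to the interior Kotschwar-Ni bounds in a way that is robust as $p\to 1$. This barrier construction is, in essence, the content of the Benatti-Pluda-Pozzetta argument.
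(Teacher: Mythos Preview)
Your local Dirichlet strategy has a genuine gap at the gradient estimate near $\partial D$. The proposed barriers $u\pm\eps$ are weak solutions of the $1$-IMCF, not of the $p$-IMCF equation, and for fixed $p>1$ there is no comparison principle between the two. Even when $u$ is smooth with $H=|\nabla u|$ on its level sets, one computes
\[
\Delta_p u-|\nabla u|^p=(p-1)\,|\nabla u|^{p-2}\,\nabla^2 u(\nu,\nu),\qquad \nu=\nabla u/|\nabla u|,
\]
which has no sign; so $u$ is neither a $p$-sub- nor a $p$-supersolution, and the barrier step fails. Without a uniform gradient bound at $\partial D$, the Dirichlet datum $u_p|_{\partial D}=u$ need not survive the limit, and then the final Huisken--Ilmanen comparison cannot be invoked. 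The paper flags precisely this obstruction in the paragraph preceding Theorem~\ref{thm-intro:obstacle}: for the Dirichlet problem $\Delta_p u_p=|\nabla u_p|^p$ with $u_p=u$ on $\partial\Omega$, ``there is currently no result guaranteeing that the Dirichlet condition is preserved as $p\to1$''. A structural red flag: your argument never uses that $u$ is \emph{proper} on all of $M$, so if it worked it would also settle Question~\ref{qs-intro:loc_approx_ivp}, which the paper explicitly records as open.

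The paper does not reprove this theorem (it is quoted from \cite{BPP24}), but it does describe the actual mechanism: ``on manifolds with conic ends, proper IMCFs are globally approximated by $p$-IMCFs (this underlies the original proof of Theorem~\ref{thm-intro:BPP})''. The Benatti--Pluda--Pozzetta argument is global, not local: properness of $u$ lets one modify $M$ outside a large compact set containing $\overline D$ to a manifold with controlled (conic) ends, on which the minimal positive $p$-harmonic potential with inner boundary $\partial E_0$ converges, after Moser's transformation, to the unique proper IMCF; restricting to $\overline D$ then gives the desired family. Convergence on $\overline D$ is inherited from the global convergence, so no barrier at $\partial D$ is ever needed. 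Your closing sentence that ``this barrier construction is, in essence, the content of the Benatti--Pluda--Pozzetta argument'' is therefore not accurate.
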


To summarize: for any $D$, one can find a ($D$-dependent) sequence of $p$-IMCFs that converges to $u$ (at a rate that may not be uniform when $D$ varies). This result allows one to transfer analytic properties of $p$-harmonic functions to the limit, see \cite{BPP24} for specific applications. It is also powerful enough to lead to Theorem \ref{thm-intro:obstacle} below.

However, it is crucial that $u$ be proper here, so this is essentially a global approximation theorem. Its interior counterpart remains largely open: for instance, one can ask

\begin{question}\label{qs-intro:loc_approx_ivp}
    Does the result of Theorem \ref{thm-intro:BPP} hold if $u$ is not proper?
\end{question}

One can even drop the initial condition and ask the following:

\begin{question}\label{qs-intro:loc_approx}
    Suppose $u\in\Lip_{\loc}(\Omega)$ is a weak IMCF in a domain $\Omega$. Given a sub-domain $D\Subset\Omega$, can we find a family $\{u_p\}_{p>1}$ of $p$-IMCFs, so that
    \[u_p\to u\qquad\text{in}\ \ C^0(\overline D)\,?\]
\end{question}

As an application, this would imply the following gradient estimate: if $u$ is a weak IMCF in a ball $B(x,R)\Subset M$, and we have $\sec\geq-K$ in $B(x,R)$, then it holds $|\D u(x)|\leq C(n,KR^2)R^{-1}$ (by Kotschwar-Ni \cite{kotschwarni}). We remark that although weak IMCFs are defined to be locally Lipschitz, no a priori gradient estimates have been obtained in general (we only have gradient estimates for those solutions that are limits of $p$-IMCFs or limits of solutions of the elliptic regularized equations).

It is natural to attack Question \ref{qs-intro:loc_approx} by considering the following Dirichlet problem
\[\left\{\begin{aligned}
    & \Delta_pu_p=|\D u_p|^p\qquad\text{in}\ \ \Omega, \\
    & u_p=u\qquad\text{on}\ \ \p\Omega,
\end{aligned}\right.\]
and hope that $u_p\to u$ as $p\to1$. However, $u_p$ could converge to an IMCF that is different from $u$. To our knowledge, there is currently no result guaranteeing that the Dirichlet condition is preserved as $p\to1$.

The limiting behavior of Dirichlet conditions is an interesting question in its own right. Consider the case of $p$-capacitors: suppose $\Omega\Subset M$ is a smooth domain, and $E_0\Subset\Omega$ is a $C^{1,1}$ domain. Let $v_p$ be the unique solution of the equation
\begin{equation}\label{eq-intro:capacitary}
    \left\{\begin{aligned}
        & \Delta_p v_p=0\qquad\text{in }\Omega\backslash\overline{E_0}, \\
	& v_p=1\qquad\text{on }\p E_0, \\
        & v_p=0\qquad\text{on }\p\Omega,
    \end{aligned}\right.
\end{equation}
make  Moser's transformation $u_p=(1-p)\log v_p$, and take the limit
\begin{equation}\label{eq-intro:potential_convergence}
    u=\lim_{p\to1}u_p\qquad\text{in}\ \ C^0_{\loc}(\Omega\backslash E_0).
\end{equation}
It follows from standard theory that $u$ is a weak IMCF in $\Omega$ with initial value $E_0$. the remaining question is what boundary condition is satisfied on $\p\Omega$. Our next result states that $u$ satisfies the outer obstacle condition which was developed by the fifth author in \cite{Xu_2024_obstacle}.

\begin{theorem}\label{thm-intro:obstacle}
    In the above setup, $u$ coincides with the solution of IMCF in $\Omega$ with initial value $E_0$ and outer obstacle $\p\Omega$, obtained in \cite[Theorem 1.6]{Xu_2024_obstacle}.
\end{theorem}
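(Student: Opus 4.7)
The plan is to verify that $u$ satisfies the defining properties of the outer obstacle weak IMCF constructed in \cite[Theorem 1.6]{Xu_2024_obstacle} and then to invoke the uniqueness statement proved there. Three features of the $p$-capacitary problem \eqref{eq-intro:capacitary} must be transferred to the limit: interior regularity (to have a meaningful locally Lipschitz $u$), Moser's variational identity (to recognize $u$ as a weak IMCF in $\Omega\setminus\overline{E_0}$), and the Dirichlet condition $v_p|_{\p\Omega}=0$ (which should encode the outer obstacle).

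First I would extract the limit. By $p$-independent interior Lipschitz estimates of Kotschwar--Ni type \cite{kotschwarni}, which rely only on local geometry bounds, the family $\{u_p\}_{p\in(1,3/2)}$ is uniformly locally Lipschitz on compact subsets of $\Omega\setminus\overline{E_0}$; Arzel\`a--Ascoli plus a diagonal argument gives subsequential convergence in $C^0_{\loc}(\Omega\setminus\overline{E_0})$ to some $u\in\Lip_{\loc}$, which by Moser's observation is a weak IMCF in $\Omega\setminus\overline{E_0}$. The initial condition $\{u<0\}=E_0$ is inherited from $u_p|_{\p E_0}=0$ together with a $C^{1,1}$ barrier at $\p E_0$ yielding a uniform gradient bound there. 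The containment $\{u<t\}\subset\Omega$ is also automatic: for every fixed $p>1$ the set $\{u_p<t\}=\{v_p>e^{-t/(p-1)}\}$ is relatively compact in $\Omega$ because $v_p$ vanishes on $\p\Omega$ and is positive and continuous inside, and local uniform convergence passes this to $u$.

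The crux is to verify the outer-obstacle minimization property of \cite{Xu_2024_obstacle}: on each test region compactly contained in $\Omega$, the sublevel sets of $u$ should minimize the Huisken--Ilmanen functional $J_u$ among competitors compatible with the obstacle $\p\Omega$. For each $p>1$, the level sets of $v_p$ enjoy a $p$-energy minimization among $W^{1,p}_0(\Omega\setminus\overline{E_0})$ competitors---this is the capacitary nature of \eqref{eq-intro:capacitary}---and I would pass this to $p\to 1$ via the same variational/$\Gamma$-convergence mechanism that underlies Moser's argument, taking care that the enforced vanishing of test functions at $\p\Omega$ survives as the obstacle condition in the limit. Once this is established, the uniqueness clause in \cite[Theorem 1.6]{Xu_2024_obstacle} forces $u$ to coincide with the outer-obstacle IMCF constructed there, and this uniqueness of the limit upgrades subsequential to full convergence as $p\to 1$. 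I expect precisely this last verification to be the main obstacle: the normalization $u_p=(1-p)\log v_p$ degenerates the Dirichlet condition on $\p\Omega$ (a naive pointwise limit gives only $u\le+\infty$ there), so recovering the correct obstacle reflection demands a uniform-in-$p$ quantitative control of the decay rate of $v_p$ near $\p\Omega$, most naturally obtained via explicit $p$-Laplace barriers exploiting the smoothness of $\p\Omega$.
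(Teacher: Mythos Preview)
Your setup---extracting a subsequential limit $\tilde u$ via the Kotschwar--Ni gradient bound and recognising it as a weak IMCF in $\Omega$ with initial value $E_0$---matches the paper exactly. The divergence comes at what you correctly identify as the crux.

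You propose to verify the outer-obstacle minimization property of \cite{Xu_2024_obstacle} \emph{directly}, by passing the capacitary variational identity through a $\Gamma$-type limit and controlling the degeneration of the Dirichlet condition via barriers at $\partial\Omega$. The paper does not do this, and in fact flags your route as open: it remarks that a ``potential-theoretic proof'' studying $|\nabla u_p|^{p-2}\nabla u_p$ near $\partial\Omega$ would be interesting to find. So your plan, as written, has a genuine gap at exactly the step you yourself single out: no uniform-in-$p$ decay estimate of $v_p$ near $\partial\Omega$ of the required strength is supplied, and none is known that would make the obstacle condition survive the limit.

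The paper sidesteps this entirely. It uses two facts: (i) the obstacle solution $u$ of \cite{Xu_2024_obstacle} is characterised as the \emph{maximal} weak IMCF in $\Omega$ with initial value $E_0$, so $\tilde u\le u$ is automatic and only $\tilde u\ge u$ needs proof; (ii) the Dirichlet condition $v_p|_{\partial\Omega}=0$ is used \emph{only} through its consequence that $v_p$ is the \emph{minimal} positive $p$-harmonic function on $\Omega\setminus\overline{E_0}$ with $v_p|_{\partial E_0}=1$, equivalently that $u_p$ dominates every $p$-IMCF on $\overline\Omega\setminus E_0$ vanishing on $\partial E_0$. To exploit (ii), the paper recalls that the obstacle solution is built in \cite{Xu_2024_obstacle} as a limit of \emph{proper} IMCFs $w_i$ on conformally blown-up enlargements $(\Omega_{\delta_i},g'_i)$; by the Benatti--Pluda--Pozzetta approximation theorem (Theorem~\ref{thm-intro:BPP}), each $w_i$ is $C^0$-close on $\overline\Omega\setminus E_0$ to some $p_{j_i}$-IMCF $f_i$ with $f_i|_{\partial E_0}=0$. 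Minimality of $v_{p_{j_i}}$ then gives $u_{p_{j_i}}\ge f_i$, hence $u_{p_{j_i}}\ge w_i - o(1)$, and letting $i\to\infty$ yields $\tilde u\ge u$.

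In short: you try to carry boundary information through the limit analytically; the paper replaces this with a one-sided comparison, converting ``Dirichlet zero on $\partial\Omega$'' into ``$u_p$ is maximal among $p$-IMCFs with the inner boundary condition'', and produces competitors via the soft-obstacle construction plus Theorem~\ref{thm-intro:BPP}. Your approach, if it could be completed, would likely give more (e.g.\ convergence of a calibration up to the boundary), but as stated it is incomplete where the paper's argument is not.
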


The outer obstacle condition, morally (but less precisely) speaking, states that each level set of $u$ contacts tangentially with $\p\Omega$. The convergence \eqref{eq-intro:potential_convergence} is subtle: each $u_p$ ($p>1$) diverges to $+\infty$ at $\p\Omega$, but the limit solution $u$ is bounded on $\Omega$. All but boundedly many level sets of $u_p$ disappear in the limiting process (due to Theorem \ref{thm-intro:obstacle}), and the boundary tangency is preserved through the limit.

Our current proof of Theorem \ref{thm-intro:obstacle} combines Theorem \ref{thm-intro:BPP} and the soft obstacle approximation argument in \cite{Xu_2024_obstacle}. More in detail, one sees that this proof is mostly IMCF-theoretic. Specifically, we comment that:
\begin{itemize}[nosep]
    \item The main nonlinear potential-theoretic fact that we used is the following: on manifolds with conic ends, proper IMCFs are globally approximated by $p$-IMCFs (this underlies the original proof of Theorem \ref{thm-intro:BPP});
    \item The Dirichlet condition $v_p|_{\p\Omega}=0$ is used only in the following place: $v_p$ is the smallest $p$-harmonic function in $\Omega\backslash E_0$ with $v_p|_{\p E_0}=1$.
\end{itemize}
It would be interesting to find a potential-theoretic proof of Theorem \ref{thm-intro:obstacle}. For this goal, perhaps one has to study the fine properties of the vector field $|\D u_p|^{p-2}\D u_p$ near $\p\Omega$. Such an argument, if found, may ideally imply that $|\D u_p|^{p-2}\D u_p$ converges to a boundary-orthogonal calibration of $u$ (see \cite[Section 3.4]{Xu_2024_obstacle} for the latter notion).

\vspace{12pt}

\textbf{Acknowledgements.} 
We would like to thank Gerhard Huisken for conversations on the IMCF core. L.M. and M.R. are supported by the PRIN project no. 20225J97H5 ``Differential-geometric aspects of manifolds via Global Analysis''. A.G.S. and L.B. are members of the GNAMPA INdAM group. This research was funded in part by the Austrian Science Fund (FWF) [grant DOI \href{https://www.fwf.ac.at/en/research-radar/10.55776/EFP6}{10.55776/EFP6}]. For open access purposes, the authors have applied a CC BY public copyright license to any author-accepted manuscript version arising from this submission.

\section{Preliminaries}\label{sec:prelim}

For the preliminary material in this section, we refer the reader to \cite{ajm} and references therein. For $p \in (1,n]$, the Green kernel $\gr$ of $\Delta_p$ on a smooth, relatively compact domain $\Omega \Subset M$ with pole at $o \in \Omega$ and Dirichlet boundary conditions on $\partial \Omega$ is the minimal positive solution to 
\begin{equation}\label{eq_distri}
\Delta_p \gr = -\delta_o. 
\end{equation}
The construction of $\gr$ can be found in \cite[Thm. 3.19]{holopainen}. There, the author also proves the following properties:
\begin{equation}\label{eq_levelGreen}
\int_{ \{\gr \in [s,t]\} } |\nabla \gr|^p = t-s \qquad \forall \, 0 < s < t
\end{equation}
and 
\begin{equation}\label{eq_nice_pcapac}
\capac_p\big( \{\gr \ge \ell\}, \Omega\big) = \ell^{-p} \int_{\{\gr \le \ell\}} |\nabla \gr|^p = \ell^{1-p}, 
\end{equation}
where the $p$-capacity $\capac_p(K,\Omega)$ for $K$ compact and $\Omega$ open is defined by 
$$
\capac_p(K,\Omega) = \inf \left\{ \int_\Omega |\nabla \psi|^p \ : \ \psi \in \lip_c(\Omega), \ \psi\ge 1 \ \text{\,on\,} \, K\right\}.
$$
The minimality of the kernel constructed in \cite{holopainen} was later proved in \cite{ajm} as a consequence of a comparison theorem for Green kernels.

If $\Omega$ has non-compact closure or nonsmooth boundary (including $\Omega=M$), a hernel $\gr$ with pole at $o$ was constructed in \cite{holopainen} by means of an increasing exhaustion $\{\Omega_j\}$ of smooth domains of $\Omega$ and related Green kernels $\gr_{j}$. The existence of a locally finite limit for the family $\{\gr_j\}$ is not automatic, and it is equivalent to the fact that $\Delta_p$ is non-parabolic on $\Omega$, namely, that $\capac_p(K,\Omega) > 0$ for some (equivalently all) $K$ with non-empty interior.

The behavior of $\gr$ in a neighbourhood of $o$ was described by J. Serrin in \cite{Serrin_2}, and later refined by Kichenassamy and Veron in \cite{kichenveron} and \cite[pp. 243-251]{veron}. Their proof carries over to manifolds, as stated in \cite[Theorem 2.4]{ajm}, leading to the following theorem. Setting 
\begin{equation}\label{def_mup}
    \disp \mu(r) = \left\{ \begin{array}{ll}
        \disp \omega_{n-1}^{- \frac{1}{p-1}}\left(\frac{p-1}{n-p}\right) r^{- \frac{n-p}{p-1}} & \qquad \text{if } \, p < n \\[0.5cm]
        \disp \omega_{n-1}^{- \frac{1}{n-1}}(-\log r) & \qquad \text{if } \, p=n,
    \end{array}\right.
\end{equation}
it holds
\begin{theorem}\label{teo_localsingular}
For $p \le n$, let $\gr$ be a Green kernel for $\Delta_p$ on an open set $\Omega \subset M^n$ containing $o$. Then, $\gr$ is smooth in a punctured neighbourhood of $o$ and, as $x \ra o$,
\begin{equation}
    \begin{array}{ll}
        (1) & \quad \gr \sim \mu(r), \\[0.2cm]
        (2) & \quad | \nabla \gr - \mu'(r) \nabla r | = o\big( \mu'(r)\big),\\[0.3cm]
        (3) & \quad \text{if } \, p < n, \quad \big| \nabla^2 \gr - \mu''(r) \di r \otimes \di r - \frac{\mu'(r)}{r} \big( \metric - \di r \otimes \di r\big) \big| = o\big( \mu''(r)\big).
    \end{array}
\end{equation}
\end{theorem}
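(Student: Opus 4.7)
\textbf{Proof plan for Theorem \ref{teo_localsingular}.} Since the conclusions are purely local near $o$, the plan is to reduce to a problem in a Euclidean ball and invoke the sharp asymptotic results of Serrin and of Kichenassamy--Veron. Concretely, I would fix normal coordinates on a geodesic ball $B_\rho(o)$, in which the metric reads $g_{ij}(x)=\delta_{ij}+O(|x|^2)$ and the radial distance coincides with the Euclidean norm, $r(x)=|x|$. In these coordinates the operator $\Delta_p$ takes the form
\[
\Delta_p u \;=\; \tfrac{1}{\sqrt{\det g}}\,\partial_i\Bigl(\sqrt{\det g}\,g^{ij}|\nabla u|^{p-2}\,\partial_j u\Bigr),
\]
which is a quasilinear elliptic operator of $p$-Laplace type with coefficients smoothly depending on $x$ and reducing to the flat $p$-Laplacian at $x=0$. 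Thus in a punctured neighbourhood of $o$ the Green kernel $\gr$ is a positive $C^{1,\alpha}$ solution of a quasilinear equation in divergence form with an isolated singularity at the origin.

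The first step is the asymptotic (1). For $p<n$, the function $\mu(r)=\omega_{n-1}^{-1/(p-1)}\tfrac{p-1}{n-p}r^{-(n-p)/(p-1)}$ is the radial fundamental solution of the Euclidean $p$-Laplacian with unit mass at $0$, and for $p=n$ the analogous statement holds with the logarithmic expression in \eqref{def_mup}. Serrin's isolated singularity theorem (Serrin, \emph{Acta Math.} 1965, cf.\ \cite{Serrin_2}) for quasilinear equations of $p$-Laplace type with Lipschitz-perturbed coefficients gives $\gr(x)/\mu(|x|)\to c$ for some $c>0$; the constant is then pinned down to $c=1$ by testing $\Delta_p\gr=-\delta_o$ against a cut-off of $\gr$ itself together with the capacity identity \eqref{eq_nice_pcapac}, which in the limit forces $c=1$ by comparison with the Euclidean capacitary computation. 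This gives (1).

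For (2) and (3) I would invoke the sharpening of Serrin's result due to Kichenassamy and Veron (\cite{kichenveron} and \cite[pp.~243--251]{veron}): for quasilinear equations whose principal part is a smooth perturbation of the $p$-Laplacian with an isolated Dirac singularity, the solution admits a first-order asymptotic expansion where the gradient and, for $p<n$, the Hessian are asymptotic to those of the model radial solution $\mu(r)$. The argument in the Euclidean setting proceeds by rescaling $\gr$ at dyadic scales $r_k\to 0$: the rescaled solutions converge in $C^{1,\alpha}_{\loc}$ (and in $C^{2,\alpha}_{\loc}$ away from zero, when $p<n$) to the radial Euclidean fundamental solution, by Tolksdorf-type regularity and the uniqueness of the radial profile. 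The curvature of $g$ in normal coordinates contributes perturbations of order $|x|^2$ in the coefficients, which translates to errors of order $o(\mu'(r))$ in the gradient and $o(\mu''(r))$ in the Hessian of $\gr$ compared to the Euclidean model; this is precisely what is needed for (2) and (3). The Hessian case is restricted to $p<n$ because in the borderline case $p=n$ the model $\mu$ is only logarithmic, and the second derivatives of $\gr$ are not controlled by $\mu''$ alone.

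The main obstacle is really verifying that the Kichenassamy--Veron results, stated in the Euclidean setting, transfer cleanly to the manifold setting; but since this was already done in \cite[Theorem 2.4]{ajm} (and the coefficients in normal coordinates are smooth perturbations of the Euclidean ones), the present theorem is obtained by quoting that statement. No independent calculation is required beyond the normal-coordinate reduction.
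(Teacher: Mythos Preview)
Your outline is correct and matches the paper's strategy: Serrin's theorem for a two-sided bound, a Kichenassamy--Veron blow-up to upgrade this to sharp asymptotics for $\gr$, $\nabla\gr$, and $\nabla^2\gr$, and a test against a radial cutoff to pin down the constant. Two remarks, however. First, your proposal ultimately reduces to quoting \cite[Theorem~2.4]{ajm}, but Theorem~\ref{teo_localsingular} \emph{is} \cite[Theorem~2.4]{ajm}; the paper supplies the proof in the Appendix precisely because it only appeared in an unpublished arXiv version of \cite{ajm}, so ``obtain the present theorem by quoting that statement'' is circular here and you do need to actually run the argument. Second, a small inaccuracy: Serrin's result \cite{Serrin_2} yields only the two-sided bound $\gr\asymp\mu(r)$, not the existence of the limit $\gr/\mu\to c$. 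Establishing that the limit exists is itself the first nontrivial step of the Kichenassamy--Veron argument, done by showing that every subsequential blow-up limit coincides with the radial model via the strong maximum principle. The paper carries this out in detail: one rescales $u_\lambda(r,\theta)=\gr(\lambda r,\theta)/\mu(\lambda R)$ with respect to the dilated metric $g_\lambda=\lambda^{-2}T_\lambda^*g$, obtains uniform $C^{1,\beta}$ bounds, identifies the limit and hence the asymptotic $\gr\sim\gamma^*\mu(r)$, shows $\gamma^*=1$ by testing, and then (for $p<n$) bootstraps to $C^{2,\beta}$ via Schauder estimates once $|\nabla\gr|$ is known to be bounded away from zero near $o$.
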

Since, to our knowledge, its proof only appeared in an unpublished version of \cite{ajm} on arXiv, we include it in the Appendix.

\subsection{Doubling, \texorpdfstring{$(q,p)$}{(q,p)}-Poincar\'e and Sobolev inequalities}\label{sec_proper}

\begin{definition}\label{def-prelim:vd_poincare}
We say that an open subset $\Omega \subset M^n$ satisfies:
\begin{itemize}
    \item[$\dous$] the \emph{doubling property} if there exists a constant $C_{\Dou} > 1$ such that
    $$
    |B_{2r}(x)| \le C_{\Dou} |B_r(x)| \qquad \text{for each } \ \  B_{2r}(x) \Subset \Omega.
    $$
    \item[$\neuqp$] the \emph{weak $(q,p)$-Poincar\'e inequality}, for given $1 \le q \le p < \infty$, if there exists a constant $\Po_{q,p}$ such that
    \begin{equation}\label{WNP}
    \left\{ \fint_{B_r(x)} |\psi - \bar \psi_{B_r(x)}|^q\right\}^{\frac{1}{q}} \le \Po_{q,p} r \left\{ \fint_{B_{2r}(x)} |\nabla \psi|^p \right\}^{\frac{1}{p}}
    \end{equation}
    holds for each $B_{2r}(x) \Subset \Omega$ and $\psi \in \lip_c(\Omega)$.
\end{itemize}
\end{definition}
\begin{remark}
	As shown in \cite[Lem. 8.1.13]{hkst}, $\dous$ implies:
\begin{equation}\label{rel_low_vol}
	\forall \, B_s' \subset B_t \subset B_{2t} \Subset \Omega \ \text{balls}, \quad \frac{|B_s'|}{|B_t|} \ge C \left(\frac{s}{t}\right)^\nu
\end{equation}
with 
\[
\nu =  \log_2 C_\Dou, \qquad C = C(C_\Dou).
\]
The constant $\nu$ is called the doubling dimension of $M$. 
\end{remark}

\begin{remark}
	Note that, by the H\"older inequality,  the validity of $\neuqp$ implies that of $(\mathrm{NP}_{q',p'})$ for each $q' \le q$ and $p' \ge p$. 
\end{remark}

\begin{example}\label{ex_mrs_local}\cite[Remark 3.1]{ajm}
Let $B_{6R} \Subset M^n$ be a relatively compact geodesic ball centered at a fixed origin, and suppose that
\begin{equation}\label{eq_iporicci_local}
\Ricc \ge -(n-1)\kappa^2 \qquad \text{on } \, B_{6R},
\end{equation}
for some constant $\kappa \ge 0$. Denote by $v_\kappa(t)$ (respectively, $V_\kappa(t)$) the volume of a sphere (resp. ball) of radius $t$ in the space form of curvature $-\kappa^2$. Then, $B_R$ satisfies:
\begin{itemize}
    \item the doubling property with constant $C_{\Dou} \doteq \frac{V_\kappa(2R)}{V_\kappa(R)}$.
    In particular, if $\Ricc \ge 0$ then the entire $M$ satisfies $\dous$; 
    \item $\neup$ with constant 
    \begin{equation}\label{poincare_constant}
    \Po_{p,p} = \exp\left\{ \frac{c_n(1+ \kappa R)}{p}\right\}
    \end{equation}
    and $c_n$ only depending on $n$;
    \item the $L^1$ Sobolev inequality
    \begin{equation}\label{eq_sobosempote}
    \begin{array}{lcl}
    \disp \left(\int |\psi|^{\frac{n}{n-1}} \right)^{\frac{n-1}{n}} \le \So_{1,n}(R) \int |\nabla \psi| \qquad \forall \, \psi \in \lip_c(B_R)
    \end{array}
    \end{equation}
    with constant 
    \[
    \So_{1,n}(R) \le \disp \frac{C_{n}V_\kappa(2R)}{|B_R|} \left( \Po_{1,1} + \frac{V_\kappa(3R)}{R v_\kappa(R)} \right),   
    \]
    and the $L^p$ Sobolev inequality
    \begin{equation}\label{SobolevL1_local}
    \left( \int |\psi|^{\frac{np}{n-p}} \right)^{\frac{n-p}{n}} \le \So_{p,n}(R) \int |\nabla \psi|^p \qquad \forall \, \psi \in \lip_c(B_{R})
    \end{equation}
    with constant
    $$
    \So_{p,n}(R) = \left[ \frac{\So_{1,n}(R) p(n-1)}{n-p}\right]^p \to \So_{1,n}(R) \qquad \text{as } \, p \to 1.
    $$
\end{itemize}
\end{example}

We shall be interested in manifolds globally satisfying $\dous$, $\neuuno$. Note that this is the case if $\Ric \ge 0$ on $M$. By \cite[Thm. 9.1.15]{hkst} and in view of \eqref{rel_low_vol}, the assumption guarantees the existence of a constant $\So(C_\Dou,\Po_{1,1})$ such that a Neumann $L^1$-Sobolev inequality holds: 
 
\begin{equation}\label{neusob}
	\left( \fint_{B_r(y)} |\psi - \bar\psi_{B_r(y)}|^{\frac{\nu}{\nu-1}} \right)^{\frac{\nu-1}{\nu}} \le \So r \fint_{B_r(y)} |\nabla \psi| \qquad \forall \, y \in M, \psi \in \lip(B_r(y)),
\end{equation}

where $\nu$ is the doubling dimension in \eqref{rel_low_vol}. It is shown in \cite[Lemma 3.21]{ajm}, that \eqref{neusob} implies the Neumann $L^p$-Sobolev 
	\begin{equation}\label{neusob_p}
		\left( \fint_{B_r(y)} |\psi - \bar\psi_{B_r(y)}|^{\frac{\nu p}{\nu-p}} \right)^{\frac{\nu-p}{\nu}} \le \bar \So r \left(\fint_{B_r(y)} |\nabla \psi|^p\right)^{\frac{1}{p}} \quad \forall \, y \in M, \psi \in \lip(B_r(y)),
	\end{equation}
for each $1 \le p \le p_0 < \nu$, with constant $\bar \So$ only depending on $C_\Dou, \Po_{1,1}, p_0$. 
\begin{remark}\label{rem_neup}
In particular, \eqref{neusob_p} together with the H\"older inequality guarantee that a manifold satisfying $\dous$ and $\neuuno$ also supports $\neup$ for each $p \in [1,p_0]$, with a constant $\Po_{p,p}$ only depending on $C_\Dou, \Po_{1,1}, p_0$.
\end{remark}

Inequality \eqref{neusob} and $\dous$ yield local Sobolev inequalities with uniform constants. Since we found no precise reference, we briefly supply the argument, that was suggested to us by P. Koskela. 

\begin{proposition}\label{prop_local_sobolev_dir}
    Let $M$ be a complete noncompact manifold satisfying $\dous$ and $\neuuno$ for some constants $C_\Dou$ and $\Po_{1,1}$. Then, there exists a constant $\hat \So(C_\Dou, \Po_{1,1})$ such that, for each ball $B_r = B_r(x)$,
    
    \begin{equation}\label{isop_mean}
    	\left( \fint_{B_r} |\psi|^{\frac{\nu}{\nu-1}} \right)^{\frac{\nu-1}{\nu}} \le \hat \So r \fint_{B_r} |\nabla \psi| \qquad \forall \, \psi \in \lip_c(B_r),
    \end{equation}
\end{proposition}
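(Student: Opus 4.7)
The plan is to derive \eqref{isop_mean} from the Neumann inequality \eqref{neusob} by applying the latter on a dilated ball $B_{\alpha r}$ for a suitable $\alpha > 1$, and exploiting the fact that $\psi$, being compactly supported in the smaller subball $B_r$, has a small average $\bar\psi_{B_{\alpha r}}$ that can be absorbed into the $L^{\nu/(\nu-1)}$ norm of $\psi$ itself. The crucial preliminary step is a reverse doubling estimate with constants depending only on $C_\Dou$.

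\emph{Step 1 (reverse doubling).} I claim there exist $\alpha > 1$ and $\theta \in (0, 1)$, depending only on $C_\Dou$, such that $|B_r(x)|/|B_{\alpha r}(x)| \le \theta^\nu$ for every $x \in M$ and $r > 0$. Since $M$ is complete, connected and noncompact, a minimizing geodesic issuing from $x$ contains a point $y$ with $d(x, y) = 3r$. Then $B_r(y)$ and $B_{2r}(x)$ are disjoint subsets of $B_{4r}(x) \subset B_{8r}(y)$, and three iterations of $\dous$ centered at $y$ give $|B_r(y)| \ge C_\Dou^{-3} |B_{4r}(x)|$. Rearranging yields $|B_{4r}(x)| \ge (1 - C_\Dou^{-3})^{-1} |B_{2r}(x)|$, hence $|B_{2s}(x)| \ge \gamma |B_s(x)|$ for all $s > 0$ and some $\gamma = \gamma(C_\Dou) > 1$. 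Iterating produces $\alpha = 2^k$ with $|B_{\alpha r}|/|B_r| \ge 2^\nu$, for a sufficiently large $k$ depending only on $C_\Dou$.

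\emph{Step 2 (application of \eqref{neusob} and absorption).} Fix $\psi \in \lip_c(B_r)$ and extend by zero to obtain $\psi \in \lip(B_{\alpha r})$. Since $|\nabla \psi| \equiv 0$ on $B_{\alpha r} \setminus B_r$, \eqref{neusob} applied on $B_{\alpha r}$ gives
\[
\BBB := \Bigl(\fint_{B_{\alpha r}} |\psi - \bar\psi_{B_{\alpha r}}|^{\frac{\nu}{\nu-1}}\Bigr)^{\frac{\nu-1}{\nu}} \le \So\, \alpha r \cdot \frac{|B_r|}{|B_{\alpha r}|} \fint_{B_r} |\nabla \psi|.
\]
Set $\Ar := \bigl(\fint_{B_{\alpha r}} |\psi|^{\nu/(\nu-1)}\bigr)^{(\nu-1)/\nu}$. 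By H\"older's inequality together with $\supp\psi \subset B_r$,
\[
|\bar\psi_{B_{\alpha r}}| \le \frac{1}{|B_{\alpha r}|}\int_{B_r} |\psi| \le \Bigl(\frac{|B_r|}{|B_{\alpha r}|}\Bigr)^{1/\nu} \Ar \le \theta \Ar,
\]
and the triangle inequality yields $\Ar \le \BBB + \theta \Ar$, i.e., $\Ar \le (1-\theta)^{-1} \BBB$. Since $\bigl(\fint_{B_r} |\psi|^{\nu/(\nu-1)}\bigr)^{(\nu-1)/\nu} = (|B_{\alpha r}|/|B_r|)^{(\nu-1)/\nu} \Ar$, one concludes \eqref{isop_mean} with
\[
\hat\So = \frac{\So \alpha}{1 - \theta}\Bigl(\frac{|B_r|}{|B_{\alpha r}|}\Bigr)^{1/\nu} \le \frac{\So \alpha \theta}{1 - \theta},
\]
a constant depending only on $C_\Dou$ and $\Po_{1,1}$ (through $\So$ via \eqref{neusob}).

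The main difficulty is Step 1: extracting uniform reverse doubling purely from $\dous$ and the noncompactness of $M$. The rest is a standard absorption argument; note that noncompactness is essential here, as on a compact $M$ the inequality \eqref{isop_mean} fails for $\psi \equiv 1$, so the statement would be simply false without it.
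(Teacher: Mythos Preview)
Your proof is correct. Both your argument and the paper's rest on the same core idea: apply the Neumann Sobolev inequality \eqref{neusob} on a larger ball and absorb the mean of $\psi$ into the left-hand side. The difference lies in \emph{which} larger ball is used. You enlarge concentrically to $B_{\alpha r}(x)$ and invoke a reverse-doubling estimate (your Step~1) to make $|B_r|/|B_{\alpha r}|$ small enough for absorption. The paper instead picks a point $y \in \partial B_{2r}(x)$ and applies \eqref{neusob} on the off-center ball $B_{4r}(y)$; since $B_r(y)$ lies entirely outside $\supp\psi$, the ratio $|B_{4r}(y)\setminus B_r(y)|/|B_{4r}(y)| \le 1 - C_\Dou^{-2}$ is bounded away from $1$ by doubling alone, and absorption follows. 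The paper's off-center trick thus sidesteps the explicit reverse-doubling lemma, while your approach makes the role of noncompactness (needed for reverse doubling, and implicitly also for the existence of $y$ in the paper's argument) more transparent. Both routes yield a constant $\hat\So$ depending only on $C_\Dou$ and $\Po_{1,1}$.
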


\begin{proof}
Fix a point $y \in \partial B_{2r}$, so that $B_r(y) \subset B_{3r}\backslash B_r$. Using \eqref{neusob} on $B_{4r}(y)$ and the triangle inequality we get
\begin{equation}\label{ste1}
	\begin{array}{lcl}
		\disp \left( \fint_{B_{4r}(y)} |\psi|^{\frac{\nu}{\nu-1}} \right)^{\frac{\nu-1}{\nu}} & \le & \disp \left( \fint_{B_{4r}(y)} |\bar \psi_{B_{4r}(y)}|^{\frac{\nu}{\nu-1}} \right)^{\frac{\nu-1}{\nu}} + 4\So r \fint_{B_{4r}(y)} |\nabla \psi| \\[0.5cm]
		& = & \disp \fint_{B_{4r}(y)} |\psi| + 4\So r \fint_{B_{4r}(y)} |\nabla \psi|.
	\end{array}
\end{equation}		
Denote by $A_{r}(y) = B_{4r}(y)\backslash B_r(y)$. Since $\psi = 0$ on $B_r(y)$, using $\dous$ and the H\"older inequality gives
\[
\begin{array}{lcl}
	\disp \fint_{B_{4r}(y)} |\psi| & = & \disp \frac{|A_{r}(y)|}{|B_{4r}(y)|} \fint_{A_r(y)} |\psi| \le \frac{|A_{r}(y)|}{|B_{4r}(y)|} \left( \fint_{A_r(y)} |\psi|^{\frac{\nu}{\nu-1}} \right)^{\frac{\nu-1}{\nu}} \\[0.5cm]
	& = & \disp \left[\frac{|A_{r}(y)|}{|B_{4r}(y)|}\right]^{\frac{1}{\nu}} \left( \fint_{B_{4r}(y)} |\psi|^{\frac{\nu}{\nu-1}} \right)^{\frac{\nu-1}{\nu}} \le \disp \left[\frac{C_\Dou^2 -1}{C_\Dou^2}\right]^{\frac{1}{\nu}} \left( \fint_{B_{4r}(y)} |\psi|^{\frac{\nu}{\nu-1}} \right)^{\frac{\nu-1}{\nu}}.
\end{array}
\]
Inserting into \eqref{ste1}, we get
\[
\left( \fint_{B_{4r}(y)} |\psi|^{\frac{\nu}{\nu-1}} \right)^{\frac{\nu-1}{\nu}}	\le \left[ 1 - \left(\frac{C_\Dou^2 -1}{C_\Dou^2}\right)^{\frac{1}{\nu}}\right]^{-1} 4\So r \fint_{B_{4r}(y)} |\nabla \psi|.
\]
Call $\So_1$ the constant in the right-hand side. Concluding, since $\psi$ is supported on $B_r$ and by \eqref{rel_low_vol} we obtain
\[
\left( \fint_{B_{r}} |\psi|^{\frac{\nu}{\nu-1}} \right)^{\frac{\nu-1}{\nu}} \le \left[\frac{B_{4r}(y)}{B_r}\right]^{\frac{\nu-1}{\nu}} \left( \fint_{B_{4r}(y)} |\psi|^{\frac{\nu}{\nu-1}} \right)^{\frac{\nu-1}{\nu}}	\le \So_1 r \left[ \frac{|B_r|}{|B_{4r}(y)|}\right]^{\frac{1}{\nu}}\fint_{B_r} |\nabla \psi|.
\]
Inequality \eqref{isop_mean} readily follows since $B_r \subset B_{4r}(y)$.
\end{proof}

Manifolds supporting $\dous$, $\neup$ and a reverse doubling inequality enjoy a global, weighted Sobolev inequality. The next theorem is due to V. Minerbe  \cite{minerbe} for $p=2$ and to D. Tewodrose \cite{tewo2} for general $p$. We quote the following simplified version of \cite[Thm 1.1]{tewo2}.

\begin{theorem}
	\label{theorem-tewo2}
	Let $M^n$ be a complete manifold satisfying $\dous$ and $\neup$ for some $p\in[1,\nu)$, with doubling dimension $\nu=\log_2 C_\Dou$. Assume that there exist constants $C_{\RD}>0$, $b\in (p, \nu]$ and a point $o \in M$ such that
	\begin{equation}\label{eq_reversevol}
		\forall \, t \ge s > 0, \qquad \frac{|B_t(o)|}{|B_s(o)|} \ge C_{\RD} \left( \frac{t}{s}\right)^b.
	\end{equation}
	Then, there exists $\So_{p,\nu}$ depending only on $C_\Dou$, $p$, $\Po_{p,p}$, $b$ and $C_{\RD}$ such that
	\begin{equation}
		\label{sob_tewodrose}
		\left(
		\int_M \left[ \frac{r^\nu}{|B_r(o)|}
		\right]^{-\frac{p}{\nu-p} } |\psi|^{\frac{\nu p}{\nu-p}}
		\right)^{\frac{\nu-p}{\nu}}
		\leq
		\So_{p,\nu} \int_M |\nabla \psi|^p, \qquad \forall \,\psi\in \lip_c (M).
	\end{equation}
\end{theorem}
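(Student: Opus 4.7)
The plan is to prove \eqref{sob_tewodrose} by a dyadic annular decomposition around $o$ combined with a gluing argument, following V. Minerbe \cite{minerbe} in the case $p=2$ and its extension by D. Tewodrose \cite{tewo2}. First, by Remark \ref{rem_neup} the standing hypotheses $\dous$ and $\neup$ yield a uniform Neumann $L^p$-Sobolev inequality \eqref{neusob_p} on every geodesic ball of $M$, with constant $\bar\So$ depending only on $C_\Dou$ and $\Po_{p,p}$. Next, partition $M$ into dyadic annuli $A_k = \{2^k \le r < 2^{k+1}\}$, $k \in \mathbb Z$. Doubling gives $|B_{r(x)}(x)| \approx |B_{2^k}(o)|$ for $x \in A_k$, so the weight
\[
W(x) := \left[\frac{r(x)^\nu}{|B_{r(x)}(o)|}\right]^{-\frac{p}{\nu-p}}
\]
is comparable on $A_k$ to the constant $W_k := [2^{k\nu}/|B_{2^k}(o)|]^{-p/(\nu-p)}$, reducing the weighted integral to the discrete sum $\sum_k W_k\int_{A_k}|\psi|^q$ with $q := \nu p/(\nu - p)$.

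Cover each $A_k$ with finitely many balls $B^{(k)}_j$ of radius $r_k \sim 2^k$ having bounded overlap and volume comparable to $|B_{2^k}(o)|$, and split $\psi = (\psi - \bar\psi^{(k)}_j) + \bar\psi^{(k)}_j$. For the oscillation contribution, the local Sobolev gives
\[
\int_{B^{(k)}_j} |\psi - \bar\psi^{(k)}_j|^{q} \le C\, r_k^q\,|B^{(k)}_j|^{-q/\nu}\left(\int_{B^{(k)}_j}|\nabla\psi|^p\right)^{q/p};
\]
a direct scaling computation shows that the factors of $r_k^q$, $|B^{(k)}_j|^{-q/\nu}$ and $W_k$ cancel exactly, and summing over $k,j$ using the embedding $\ell^1 \subset \ell^{q/p}$ (valid since $q/p>1$) together with the bounded overlap of the cover yields
\[
\sum_{k,j} W_k \int_{A_k}|\psi-\bar\psi^{(k)}_j|^q \le C\left(\int_M|\nabla\psi|^p\right)^{q/p},
\]
which is the desired oscillation contribution to \eqref{sob_tewodrose}. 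Notably, this step uses only doubling and the Poincar\'e inequality, not the reverse doubling.

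The main obstacle is the control of the means $\bar\psi^{(k)}_j$, and this is exactly where \eqref{eq_reversevol} enters decisively. Since $\psi$ is compactly supported, $\bar\psi^{(k)}_j = 0$ for $k$ large. Writing $\bar\psi^{(k)}_j$ as a telescoping sum along a chain of concentric balls and applying the $(p,p)$-Poincar\'e inequality to each consecutive difference yields
\[
|\bar\psi^{(k)}_j|^p \lesssim \sum_{\ell \ge k} 2^{\ell p}\fint_{B_{2^\ell}(o)}|\nabla\psi|^p.
\]
Substituting this bound into $\sum_k W_k|A_k||\bar\psi^{(k)}_j|^{q}$ and applying H\"older's inequality in the $\ell$-summation reduces the task to a discrete weighted Hardy inequality whose kernel involves the ratios $|B_{2^k}(o)|/|B_{2^\ell}(o)|$ for $\ell \ge k$. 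The reverse doubling \eqref{eq_reversevol} forces these ratios to decay geometrically at rate $2^{(k-\ell)b}$, and the strict inequality $b > p$ is precisely what renders the resulting double series absolutely summable, with total depending only on $C_\RD, b, p$; the endpoint $b = p$ would produce a logarithmic divergence and $b < p$ would fail outright. This discrete Hardy step is the technical heart of the argument. Once it is executed, combining the oscillation and mean estimates gives \eqref{sob_tewodrose} with constant $\So_{p,\nu}$ depending only on $C_\Dou, p, \Po_{p,p}, b, C_\RD$.
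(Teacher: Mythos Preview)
The paper does not prove Theorem~\ref{theorem-tewo2}; it is quoted as a simplified version of \cite[Thm.~1.1]{tewo2}, itself extending Minerbe~\cite{minerbe}. Your sketch is a faithful outline of that argument: dyadic annular decomposition, local Neumann $L^p$-Sobolev on balls to control oscillations, and a telescoping-plus-discrete-Hardy step (where the condition $b>p$ is exactly what makes the series converge) to control the means. One minor correction: your appeal to Remark~\ref{rem_neup} is misdirected, since that remark records the implication $\dous+\neuuno\Rightarrow\neup$, not that $\dous+\neup$ yields the local $L^p$-Sobolev \eqref{neusob_p}; the latter is the standard self-improvement of Poincar\'e under doubling (Haj\l asz--Koskela, see \cite[Thm.~9.1.15]{hkst}), and is what Tewodrose actually uses.
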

\begin{remark}\label{rem_Pp}
	If we assume $\neuuno$ in Theorem \ref{theorem-tewo2}, then by Remark \ref{rem_neup} the constant $\Po_{p,p}$ remains uniformly bounded on intervals $[1,p_0]$ for fixed $p_0 < \nu$. Moreover, if \eqref{eq_reversevol} holds for some $b \in (1,\nu]$, by the discussion on pages 828-831 in \cite{ajm}, the constant $\So_{p,\nu}$ can be uniformly bounded in terms of $C_\Dou, \Po_{1,1}, \nu, b, C_\RD$ and $p_0$.
\end{remark}

\begin{remark}\label{rem_reversevol}
	Condition \eqref{eq_reversevol} holds, for instance, if the balls $B_t$ centered at a fixed origin $o$ satisfy
		$$
		\bar C^{-1} t^b \le |B_t| \le \bar C t^b \qquad \forall \, t \ge 1.
		$$
        for some constant $\bar C>1$ and with $b \in (p,m]$. In this case, the reverse doubling constant $C_\RD$ can be chosen to depend on $\bar C, m$ and on a lower bound $H$ for the Ricci curvature on, say, $B_{6}$. 
\end{remark}

\subsection{Mean value and Harnack inequalities}

The following lemmas are extracted from \cite{ajm}.

\begin{lemma}[{\cite[Lemma 3.3]{ajm}}]\label{lem_moser}
    Let $A_\infty \Subset M$ and fix $T>0$ in such a way that $A_0 = B_T(A_\infty) \Subset M$. Suppose that the following Sobolev inequality holds:
    \begin{equation}\label{sobolev_permoser}
    \left( \int |\psi|^{ \frac{\nu p}{\nu-p}}\right)^{\frac{\nu-p}{\nu}} \le \So_{p,\nu} \int |\nabla \psi|^p \qquad \forall \, \psi \in \lip_c(A_0),
    \end{equation}
    for some $p \in (1,\infty)$, $\nu>p$ and $\So_{p,\nu}>0$. Fix $q \ge p$. Then, for each weak solution $0 \le u \in C(A_0)\cap W^{1,p}_\loc(A_0)$ to $\Delta_p u \ge 0$ it holds
    \begin{equation}\label{halfhar_sub}
    	\sup_{A_\infty} u \le \disp  (\So_{p,\nu} \bar C_{p,\nu})^{\frac{\nu}{p q}} T^{-\frac{\nu}{q}} |A_0|^{\frac{1}{q}} \left(\fint_{A_0}  u^{q}\right)^{\frac{1}{q}},
    \end{equation}
    with
    \begin{equation}\label{Cpnu_mag0}
    	\bar C_{p,\nu} = 2^{\nu} \left[ 1+ p\right]^p.
    \end{equation}
\end{lemma}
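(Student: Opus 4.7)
The proof will follow the classical De Giorgi--Moser iteration scheme for $p$-subharmonic functions, combining a Caccioppoli-type energy estimate with the Sobolev inequality \eqref{sobolev_permoser} to boost $L^\alpha$ control of $u$ on shrinking neighborhoods of $A_\infty$ into $L^{\chi\alpha}$ control, where $\chi := \nu/(\nu-p) > 1$, then letting $\alpha \to \infty$ to extract the $L^\infty$ bound.

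First I would establish the Caccioppoli step. For every non-negative $\eta \in \lip_c(A_0)$ and every $\gamma > 0$, testing the weak inequality $\Delta_p u \ge 0$ against $\varphi := \eta^p (u+\epsilon)^{\gamma}$ (sending $\epsilon \to 0^+$ afterwards), expanding $\nabla \varphi$, and applying Young's inequality to absorb the $\int \eta^p u^{\gamma-1}|\nabla u|^p$ term yields, writing $v := u^{(\gamma+p-1)/p}$,
\[
\int \eta^p |\nabla v|^p \;\le\; C_1(p,\gamma) \int v^p |\nabla \eta|^p,
\]
with $C_1(p,\gamma)$ uniformly bounded for $\gamma \ge 1$. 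Inserting $\eta v$ in \eqref{sobolev_permoser} and using $|\nabla(\eta v)|^p \le 2^{p-1}(\eta^p|\nabla v|^p + v^p|\nabla \eta|^p)$ produces
\[
\left(\int (\eta v)^{p\chi}\right)^{1/\chi} \;\le\; \So_{p,\nu}\, C_2(p,\gamma) \int v^p |\nabla \eta|^p.
\]

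Next I would iterate. Define nested neighborhoods $B_k := \{x : \dist(x,A_\infty) < T_k\}$ with $T_k := T(\tfrac12 + 2^{-k-1})$, so $B_0 = A_0$ and $B_k \searrow A_\infty$, and choose cutoffs $\eta_k \in \lip_c(B_k)$ with $\eta_k \equiv 1$ on $B_{k+1}$ and $|\nabla \eta_k| \le c\cdot 2^k/T$. Setting $q_k := q\chi^k$ and $\gamma_k := q_k - (p-1)$ (which is $\ge 1$ by the assumption $q \ge p$), the previous display becomes
\[
\|u\|_{L^{q_{k+1}}(B_{k+1})}^{q_k} \;\le\; \So_{p,\nu}\, C_2(p,\gamma_k)\,\bigl(c\cdot 2^k/T\bigr)^p\,\|u\|_{L^{q_k}(B_k)}^{q_k}.
\]
Taking $q_k$-th roots and telescoping, with continuity of $u$ forcing $\|u\|_{L^{q_{k+1}}(B_{k+1})} \to \sup_{A_\infty} u$ as $k \to \infty$, gives
\[
\sup_{A_\infty} u \;\le\; \prod_{j\ge 0}\!\left[\So_{p,\nu}\, C_2(p,\gamma_j)\,(c\cdot 2^j/T)^p\right]^{1/q_j}\!\left(\int_{A_0} u^q\right)^{1/q}.
\]
Since $\sum_{j\ge 0} 1/q_j = \tfrac{1}{q}\cdot\tfrac{\chi}{\chi - 1} = \tfrac{\nu}{pq}$, this accounts exactly for the $\So_{p,\nu}^{\nu/(pq)}$ factor and, via the cutoff gradients, for the $T^{-\nu/q}$ factor; the sums $\sum j/q_j$ and the accumulated contribution from $C_2(p,\gamma_j)$ both converge by comparison with the geometric series of ratio $\chi^{-1} < 1$, and collapse into the announced $\bar C_{p,\nu}^{\nu/(pq)}$.

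The main obstacle is not conceptual but bookkeeping: one must optimize the Young parameter in the Caccioppoli step so as to extract an essentially $\gamma$-independent constant, and then sum the resulting geometric and arithmetic-geometric series precisely enough to obtain the explicit $\bar C_{p,\nu} = 2^{\nu}(1+p)^p$ displayed in \eqref{Cpnu_mag0}. Finally, rewriting $\int_{A_0} u^q = |A_0|\fint_{A_0} u^q$ converts the integral factor into the $|A_0|^{1/q}(\fint_{A_0} u^q)^{1/q}$ expression of \eqref{halfhar_sub}.
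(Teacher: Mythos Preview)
The paper does not actually prove this lemma: it is quoted verbatim from \cite[Lemma 3.3]{ajm} and invoked as a black box, so there is no in-paper argument to compare against. Your proposal is the standard De Giorgi--Moser iteration (Caccioppoli estimate from testing $\Delta_p u \ge 0$ with $\eta^p u^\gamma$, plug into the Sobolev inequality, iterate on dyadically shrinking neighborhoods of $A_\infty$, sum the geometric series), which is exactly the method underlying the cited result in \cite{ajm}; your identification of $\sum_j q_j^{-1} = \nu/(pq)$ and $\sum_j p/q_j = \nu/q$ correctly recovers the exponents on $\So_{p,\nu}$ and $T$, and your own caveat that the precise constant $2^\nu(1+p)^p$ requires careful optimization of the Young parameter is accurate.
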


\begin{theorem}[{\cite[Theorem 3.4]{ajm}}]\label{teo_harnack}
Fix $p \in (1, \infty)$. Let $u$ be a positive solution of $\Delta_p u=0$ on a ball $B_{6R} = B_{6R}(x_0)$, and suppose that the following weak $(1,p)$-Poincar\'e inequality holds on $B_{4R}$, for some constant $\Po_{1,p}$:
\begin{equation}\label{poinca_p}
\fint_{B_r(y)} |\psi - \bar \psi_{B_r(y)}| \le \Po_{1,p} r \left\{ \fint_{B_{2r}(y)} |\nabla \psi|^p\right\}^{\frac{1}{p}} \qquad \forall \, \psi \in \lip_c(B_{4R}),
\end{equation}
for every ball $B_r(y)\Subset B_{2R}$. Assume the validity of the Sobolev inequality
$$
\left( \int |\psi|^{ \frac{\nu p}{\nu-p}}\right)^{\frac{\nu-p}{\nu}} \le \So_{p,\nu} \int |\nabla \psi|^p \qquad \forall \, \psi \in \lip_c(B_{4R}),
$$
for some $\nu>p$ and $\So_{p,\nu}>0$. Then, having fixed $p_0 \in (p,\nu)$, the following Harnack inequality holds:
$$
\sup_{B_R} u \le \Ha_{p,\nu}^{\frac{1}{p-1}} \inf_{B_R} u,
$$
with constant
\begin{equation}\label{ine_harnack_3}
\Ha_{p,\nu} = \exp\left\{ c_2 \Po_{1,p} \left[\frac{|B_{6R}|}{|B_{2R}|}\right]^{\frac{1}{p}} Q^{-2} p \right\},
\end{equation}
where $c_2>0$ is a constant depending only on $\nu$ and $p_0$,
$$
Q = \inf_{\tau \in [1, \frac{\nu}{\nu-p}]} \big( \So_{p,\nu} C_{p,\nu} \big)^{- \frac{\nu \tau}{p}} R^{\nu \tau} |B_{2R}|^{-\tau}
$$
and
\begin{equation}\label{Cpnu}
C_{p,\nu} = 2^{\nu} \max \left\{ [1+p]^p, \frac{3^p \nu^\nu}{p^p(\nu-p)^{\nu-p}}\right\}.
\end{equation}
\end{theorem}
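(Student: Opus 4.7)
The plan is to prove the Harnack inequality by the classical Moser iteration scheme, combined with a logarithmic estimate and a John-Nirenberg argument. Throughout, $u>0$ is $p$-harmonic on $B_{6R}$, so for any exponent $\beta\neq 1-p$ and any cutoff $\eta\in\Lip_c(B_{6R})$, I would test the weak equation $\Delta_p u=0$ against $\eta^p u^\beta$ and apply Young's inequality to absorb the mixed term, obtaining a Caccioppoli estimate
\[
\int \eta^p u^{\beta-1}|\nabla u|^p \le C(p,\beta) \int u^{\beta+p-1}|\nabla \eta|^p.
\]
Setting $w=u^{(\beta+p-1)/p}$ converts this into a Caccioppoli-type inequality for $w$, which, inserted into the assumed Sobolev inequality and iterated on a telescoping family of nested balls, yields the two-sided Moser bounds
\begin{align*}
\sup_{B_R} u &\le C_1\left(\fint_{B_{2R}} u^q\right)^{1/q}, \\
\inf_{B_R} u &\ge C_2\left(\fint_{B_{2R}} u^{-q}\right)^{-1/q},
\end{align*}
valid for every $q>0$, with $C_1,C_2$ depending on $p,q,\nu,\So_{p,\nu},R$ and $|B_{2R}|$. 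The sup bound for $q\ge p$ is exactly Lemma \ref{lem_moser}; the inf bound uses iteration with negative exponents $\beta<1-p$, where Young's inequality still absorbs uniformly, and the sup bound for small $q\in(0,p)$ is recovered from the case $q=p$ via a standard interpolation argument using the same Moser scheme.

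Next I would run the critical exponent case $\beta=1-p$. There, the Caccioppoli computation degenerates and produces instead the logarithmic estimate
\[
\int \eta^p|\nabla\log u|^p \le C(p)\int|\nabla\eta|^p.
\]
Choosing $\eta$ with $\eta\equiv 1$ on $B_r(y)$ and supported in $B_{2r}(y)$ yields a local $L^p$-bound on $\nabla\log u$; combined with the weak $(1,p)$-Poincar\'e inequality \eqref{poinca_p} this gives a uniform oscillation bound on $\log u$ over each ball $B_r(y)\Subset B_{2R}$. A chaining argument (covering $B_{2R}$ by balls of comparable radius, with overlap controlled by the doubling ratio $|B_{6R}|/|B_{2R}|$) propagates this into a BMO estimate for $\log u$ on $B_{2R}$. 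A quantitative John-Nirenberg lemma then furnishes an exponent $q_0>0$, depending on the BMO norm, such that
\[
\Big(\fint_{B_{2R}} u^{q_0}\Big)\Big(\fint_{B_{2R}} u^{-q_0}\Big) \le C_5.
\]
Chaining the two Moser bounds at this particular exponent $q_0$ through the John-Nirenberg estimate produces $\sup_{B_R} u \le C_1 C_5^{1/q_0} C_2^{-1} \inf_{B_R} u$, which is the desired Harnack inequality.

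The main obstacle is purely quantitative: tracking the dependence of constants so that the final Harnack constant has the explicit form \eqref{ine_harnack_3}, and is uniformly bounded for $p$ in a range $(1,p_0)$ with $p_0<\nu$ fixed. The Sobolev constant $\So_{p,\nu}$ and the iteration factor $C_{p,\nu}$ (which captures the growth of $C(p,\beta)^{1/p}$ along the Moser cascade) enter in both sided Moser bounds, and the freedom to begin iteration at any Sobolev exponent $\tau\in[1,\nu/(\nu-p)]$ is the origin of the quantity $Q$ in the statement; the Poincar\'e constant $\Po_{1,p}$ and the doubling ratio $[|B_{6R}|/|B_{2R}|]^{1/p}$ govern the BMO norm of $\log u$ and hence the John-Nirenberg threshold $q_0$, and enter the final constant exponentially, producing the factor $\Po_{1,p}[|B_{6R}|/|B_{2R}|]^{1/p}Q^{-2}p$ in the exponent of \eqref{ine_harnack_3}. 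One must finally verify that the constants remain bounded uniformly as $p\downarrow 1$ in the relevant regimes, using Remark \ref{rem_Pp}.
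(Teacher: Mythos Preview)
The paper does not actually prove this theorem: it is stated as a quotation from \cite[Theorem~3.4]{ajm} (see the sentence ``The following lemmas are extracted from \cite{ajm}'' preceding Lemma~\ref{lem_moser} and Theorem~\ref{teo_harnack}), and no proof is supplied here. Your outline is the classical Moser iteration scheme (Caccioppoli $\to$ reverse H\"older via Sobolev $\to$ iterated $L^q$ bounds for both $u$ and $u^{-1}$, plus the logarithmic Caccioppoli estimate at the critical exponent, Poincar\'e $\to$ BMO bound on $\log u$, John--Nirenberg to bridge positive and negative exponents), which is precisely the approach carried out in \cite{ajm} to obtain the explicit constant \eqref{ine_harnack_3}. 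So your proposal matches the intended argument; the only caveat is that the quantitative bookkeeping you describe in the final paragraph is where all the content lies, and your sketch stops short of actually verifying that the constants assemble into the specific form \eqref{ine_harnack_3} with the claimed dependencies---but that level of detail is exactly what is deferred to \cite{ajm} in the present paper as well.
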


The next result is to show that, by assuming the validity of $\dous$, $\neuuno$ on the entire manifold $M$, we are able to uniformly bound the Harnack constant and get a sharp control on its growth as $p \to 1$.
\begin{theorem}\label{teo_uniHarnack}
    Let $M^n$ be a complete manifold satisfying $\dous,\neuuno$ with doubling dimension $\nu = \log_2 C_\Dou > 1$. Fix $p_0 \in (1, \nu)$. Then, there exist constants $\mathscr{C},\mathscr{H}$ depending on $C_\Dou,p_0$ and $\Po_{1,1}$ such that, for each $p \in (1, p_0]$ and each ball $B_{6R} \Subset M$,
    \begin{itemize}
        \item[-] every solution $0 \le u \in C(B_{2R})\cap W^{1,p}(B_{2R})$ to $\Delta_p u \ge 0$ satisfies
	\begin{equation}\label{eq_weakHar}
            \sup_{B_R} u \le \mathscr{C} \left( \fint_{B_{2R}} u^p \right)^{\frac{1}{p}}
	\end{equation}
        \item[-] every $p$-harmonic function $u>0$ on $B_{6R}$ satisfies $\sup_{B_R} u \le \mathscr{H}^{\frac{1}{p-1}} \inf_{B_R} u$.
    \end{itemize}
\end{theorem}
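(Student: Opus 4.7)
The plan is to deduce both assertions from Lemma \ref{lem_moser} and Theorem \ref{teo_harnack} applied on $B_{4R}$, and to verify that all constants involved there can be bounded uniformly in $p \in (1, p_0]$ using only $C_\Dou$, $\Po_{1,1}$, $p_0$. The main task is thus the uniform control of Sobolev and Poincar\'e constants as $p\to 1^+$; once this is in hand, plugging into the two quoted results is essentially bookkeeping.

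First I would set up a uniform Sobolev inequality on balls. Proposition \ref{prop_local_sobolev_dir} provides the Dirichlet $L^1$-Sobolev inequality
\[
\left(\int_{B_r} |\psi|^{\frac{\nu}{\nu-1}}\right)^{\frac{\nu-1}{\nu}} \le \hat\So\, r\, |B_r|^{-\frac{1}{\nu}} \int_{B_r} |\nabla \psi| \qquad \forall \psi \in \lip_c(B_r)
\]
with $\hat \So = \hat\So(C_\Dou, \Po_{1,1})$. Substituting $|\psi|^\alpha$ with $\alpha = p(\nu-1)/(\nu-p)$ and applying H\"older in the standard way (as in \cite[Lemma 3.21]{ajm}), this upgrades to
\[
\left(\int_{B_r} |\psi|^{\frac{\nu p}{\nu-p}}\right)^{\frac{\nu-p}{\nu}} \le \So_{p,\nu}^{B_r} \int_{B_r} |\nabla \psi|^p, \qquad \So_{p,\nu}^{B_r} := \left(\hat\So\, r\, |B_r|^{-\frac{1}{\nu}}\,\frac{p(\nu-1)}{\nu-p}\right)^p.
\]
For $p \in (1,p_0]$ the factor $p(\nu-1)/(\nu-p)$ is bounded in terms of $p_0,\nu$, and the only non-universal content of $\So_{p,\nu}^{B_r}$ is the scale-dependent factor $(r|B_r|^{-1/\nu})^p$, which will cancel against geometric factors in both applications.

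Next I would prove the mean value inequality. Apply Lemma \ref{lem_moser} with $A_\infty = B_R$, $A_0 = B_{2R}$, $T = R$, $q = p$, using $\So_{p,\nu}^{B_{2R}}$ above. A direct computation gives
\[
\sup_{B_R} u \;\le\; (\So_{p,\nu}^{B_{2R}}\,\bar C_{p,\nu})^{\frac{\nu}{p^2}} R^{-\frac{\nu}{p}} |B_{2R}|^{\frac{1}{p}} \left(\fint_{B_{2R}} u^p\right)^{\frac{1}{p}},
\]
and substituting the explicit form of $\So_{p,\nu}^{B_{2R}}$ one verifies that the powers of $R$ and $|B_{2R}|$ cancel exactly, leaving
\[
\sup_{B_R} u \le (2\hat\So)^{\frac{\nu}{p}} \left(\frac{p(\nu-1)}{\nu-p}\right)^{\frac{\nu}{p}} \bar C_{p,\nu}^{\frac{\nu}{p^2}} \left(\fint_{B_{2R}} u^p\right)^{\frac{1}{p}}.
\]
Since $\bar C_{p,\nu} = 2^\nu(1+p)^p$ and every exponent depends continuously on $p \in (1,p_0]$ while staying bounded away from infinity as $p\to 1^+$, the prefactor is bounded by a constant $\mathscr{C}(C_\Dou, p_0, \Po_{1,1})$.

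Finally I would establish the full Harnack inequality from Theorem \ref{teo_harnack} applied on $B_{6R}$ with the Sobolev inequality on $B_{4R}$. By Remark \ref{rem_neup}, $\dous+\neuuno$ yields $\neup$ on $M$ with $\Po_{p,p}$ uniformly bounded on $[1,p_0]$; H\"older then gives a uniform bound on the weak $(1,p)$-Poincar\'e constant $\Po_{1,p}$. The doubling ratio $|B_{6R}|/|B_{2R}|$ is controlled by $C_\Dou^{\log_2 3}$, and $C_{p,\nu}$ in \eqref{Cpnu} is bounded on $(1,p_0]$. For the quantity $Q$, substituting $\So_{p,\nu}^{B_{4R}}$ yields
\[
(\So_{p,\nu}^{B_{4R}})^{-\nu/p} R^\nu |B_{2R}|^{-1} = (4\hat\So)^{-\nu}\,\frac{|B_{4R}|}{|B_{2R}|}\,\left(\frac{p(\nu-1)}{\nu-p}\right)^{-\nu} \;\ge\; c_0(C_\Dou, p_0, \Po_{1,1}) > 0,
\]
and since the exponent $\tau$ in the infimum lies in the bounded interval $[1,\nu/(\nu-p_0)]$, this forces $Q$ bounded below and hence $\Ha_{p,\nu}$ bounded above by a constant $\mathscr{H}(C_\Dou,p_0,\Po_{1,1})$. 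The second assertion then follows.

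The main obstacle is the careful verification that none of the exponents $\nu/p$, $\nu/p^2$, $\nu/(\nu-p)$ blow up as $p\to 1^+$ (they do not, since $\nu>1$ and $p_0<\nu$), and that the scale-dependent factors in the Sobolev constant match with the volume and radius factors appearing in Lemma \ref{lem_moser} and in the definition of $Q$ well enough to produce genuinely universal constants; this is the reason the argument works only when one starts from a dimension-free $L^1$ Sobolev inequality rather than from $\neup$ directly.
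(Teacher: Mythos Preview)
Your proposal is correct and follows essentially the same approach as the paper: upgrade the $L^1$ Dirichlet Sobolev inequality of Proposition \ref{prop_local_sobolev_dir} to an $L^p$ one via the substitution $|\psi|^{p(\nu-1)/(\nu-p)}$, then feed the resulting scale-dependent Sobolev constant into Lemma \ref{lem_moser} and Theorem \ref{teo_harnack} and check that the radius and volume factors cancel. Your write-up is in fact more detailed than the paper's, which simply records the Sobolev constant on $B_{4R}$ and asserts that the doubling property bounds the constants in \eqref{halfhar_sub} and \eqref{ine_harnack_3} uniformly for $p\in(1,p_0]$.
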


\begin{proof}
	We apply Proposition \ref{prop_local_sobolev_dir} and plug the test function $|\psi|^{\frac{p(\nu-1)}{\nu-p}}$ into \eqref{isop_mean} to deduce, after an application of the H\"older inequality, 
	\[
	\left( \int_{B_r} |\psi|^{\frac{\nu p}{\nu-p}} \right)^{\frac{\nu-p}{\nu}} \le \left[\frac{\hat \So p(\nu-1)}{\nu-p}\right]^p |B_r|^{-\frac{p}{\nu}}r^p \int_{B_r} |\nabla \psi|^p \qquad  \forall \, \psi \in \lip_c(B_r).
	\]
	Setting $r = 4R$ and
	\[	
	\So_{p,\nu} = \left[\frac{\hat \So p(\nu-1)}{\nu-p}\right]^p |B_{4R}|^{-\frac{p}{\nu}}(4R)^p,
	\]
	again by the doubling property we obtain that the constant $\Ha_{p,\nu}$ in \eqref{ine_harnack_3}, as well as the constant in the right hand side of \eqref{halfhar_sub} (with $q = p$, $A_\infty = B_R$, $T = R$) can be bounded from above by constants depending on $p_0$, $C_\Dou$ and $\Po_{1,1}$. The theorem follows from Lemma \ref{lem_moser} and Theorem \ref{teo_harnack}.
\end{proof}

\section{Decay estimates for the Green kernel}\label{sec:p_imcf}

In \cite{ajm}, the authors claimed a certain decay estimate, see Conjecture \ref{teo_sobolev} below, for the Green kernel $\gr$ of $\Delta_p$ on an open subset $\Omega \subset M$ supporting a weighted Sobolev inequality
\begin{equation}\label{sobolev_weighted}
\left( \int_{\Omega} \eta(r)^{-\frac{p}{\nu-p}} |\psi|^{ \frac{\nu p}{\nu-p}}\right)^{\frac{\nu-p}{\nu}} \le \So_{p,\nu} \int_{\Omega} |\nabla \psi|^p \qquad \forall \, \psi \in \lip_c(\Omega).
\end{equation}

Here, $\eta$ is a function satisfying
\begin{equation}\label{def_eta_sobolev}
    \disp \eta \in C(\R^+_0), \qquad \eta > 0, \qquad \eta(t) \ \ \text{ non-decreasing on } \, \R^+_0
\end{equation}
and $\So_{p,\nu}$ also depends implicitly on $\eta$. Let $o \in M$ and set $r(x) = \dist(x,o)$. The claimed estimate is stated as follows:

\begin{conjecture}\label{teo_sobolev}
Let $\Omega \subset M$ be a connected open set, and denote by $r$ the distance from a fixed origin $o \in \Omega$. Assume that $\Omega$ supports the weighted Sobolev inequality \eqref{sobolev_weighted} for some $p \in (1,\nu)$, constant $\So_{p,\nu}>0$ and weight $\eta$ satisfying \eqref{def_eta_sobolev}. Then, $\Delta_p$ is non-parabolic on $\Omega$ and, letting $\gr(x)$ be the Green kernel of $\Delta_p$ on $\Omega$ with pole at $o$,
\begin{equation}\label{upper_consobolev}
\gr(x) \le C_{p,\nu}^{\frac{1}{p-1}} \eta\big(2r(x)\big)^{\frac{1}{p-1}} r(x)^{- \frac{\nu-p}{p-1}}, \qquad \forall \, x \in  \Omega \backslash \{o\},
\end{equation}
where
$$
C_{p,\nu} = \So_{p,\nu}^{\frac{\nu}{p}} \left[ 2^{\nu}p(1+p)^p \left(\frac{p}{p-1}\right)^{p-1}\right]^{\frac{\nu-p}{p}}
$$
is bounded as $p \ra 1$ if so is $\So_{p,\nu}$. In particular, if
\begin{equation}\label{ipo_eta_2}
\eta(t) = o \left( t^{\nu - p} \right) \qquad \text{as } \, t \ra \infty.
\end{equation}
then
 $\gr(x) \ra 0$ as $r(x) \ra \infty$ in $\Omega$.
\end{conjecture}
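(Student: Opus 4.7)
The plan is to proceed in three stages: verify $p$-non-parabolicity of $\Omega$ (so that $\gr$ exists), derive a distributional bound on the super-level sets $\{\gr \ge \ell\}$ by coupling \eqref{sobolev_weighted} with the capacity identity \eqref{eq_nice_pcapac}, and promote that bound to the pointwise estimate \eqref{upper_consobolev} by a localised Moser iteration. For the first stage, fix any compact $K \Subset \Omega$ with non-empty interior and any $\psi \in \lip_c(\Omega)$ with $\psi \ge 1$ on $K$. Since $\eta^{-p/(\nu-p)}$ is bounded below on the bounded set $K$, \eqref{sobolev_weighted} gives $\capac_p(K,\Omega) \ge c\,|K|^{(\nu-p)/\nu} > 0$, whence $\Delta_p$ is non-parabolic on $\Omega$ and $\gr$ is well defined.

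For the second stage, insert $\psi = \min\{\gr/\ell,1\}$ (approximated by Lipschitz truncations) into \eqref{sobolev_weighted}. Since $\psi \ge 1$ on $\{\gr \ge \ell\}$ and $\int_\Omega |\D\psi|^p = \ell^{-p}\int_{\{\gr<\ell\}}|\D\gr|^p = \ell^{1-p}$ by \eqref{eq_levelGreen}, one obtains
\[
\Bigl(\int_{\{\gr \ge \ell\}} \eta(r)^{-p/(\nu-p)}\Bigr)^{(\nu-p)/\nu} \le \So_{p,\nu}\,\ell^{1-p}.
\]
Fix $x_0 \in \Omega\setminus\{o\}$, set $r_0 = r(x_0)$ and $B = B_{r_0/2}(x_0)$. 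Since $\eta$ is non-decreasing and $r(y) \le 2r_0$ on $B$, the above specialises to
\[
|\{\gr \ge \ell\}\cap B| \le \eta(2r_0)^{p/(\nu-p)}\,\So_{p,\nu}^{\nu/(\nu-p)}\,\ell^{-\nu(p-1)/(\nu-p)}.
\]

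For the third stage, the majorisation $\eta(r) \le \eta(2r_0)$ on $B$ turns \eqref{sobolev_weighted} restricted to test functions supported in $B$ into an \emph{unweighted} Sobolev inequality on $B$ with constant $\eta(2r_0)^{p/\nu}\So_{p,\nu}$. Since $o \notin B$ and $\gr$ is $p$-harmonic in $B$, Moser iteration (starting from a Caccioppoli inequality for $\gr^\alpha$ with $\alpha \in (0,1)$, and iterating down to exponents below $p$) yields, for each $0 < q < \nu(p-1)/(\nu-p)$,
\[
\gr(x_0) \le C(p,\nu,q)\,\bigl(\eta(2r_0)^{p/\nu}\So_{p,\nu}\bigr)^{\nu/(pq)}\,r_0^{-\nu/q}\,\Bigl(\int_B \gr^q\Bigr)^{1/q}.
\]
Bounding $\int_B\gr^q$ by the layer-cake formula together with the distributional bound above, and optimising the splitting level $t_0 \sim \eta(2r_0)^{p/[\nu(p-1)]}\,\So_{p,\nu}^{1/(p-1)}\,|B|^{-(\nu-p)/[\nu(p-1)]}$, produces an estimate whose exponents of $\eta(2r_0)$ and $r_0$ match the target in \eqref{upper_consobolev} modulo a residual factor $|B|^{1/q - (\nu-p)/[\nu(p-1)]}$.

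The main obstacle is that this matching is precise only at the critical choice $q = \nu(p-1)/(\nu-p)$, where the residual $|B|$-exponent vanishes but where the layer-cake integral diverges (the distributional bound is sharp in $\ell$ at infinity). Pushing $q$ up to this critical value therefore demands a more delicate iteration, ideally one that keeps the weight $\eta^{-p/(\nu-p)}$ inside the iterated $L^q$-norms so that the pairing between the Sobolev-induced power of $|B|$ and its layer-cake counterpart is never broken. This is precisely the technical impasse flagged in Subsection \ref{subsec:the_gap}; closing it would settle the conjecture, while the remainder of the paper develops alternative routes that bypass it altogether.
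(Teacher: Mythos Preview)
The statement you are asked to prove is labelled a \emph{Conjecture} in the paper precisely because the paper does \emph{not} prove it: Subsection~\ref{subsec:the_gap} explains that the original argument in \cite{ajm} contains a gap, and the Proposition immediately following shows that \emph{no} iteration of the basic inequality~\eqref{sobolev_iteriamola_00} can yield a constant that stays bounded as $p\to1$. So there is no ``paper's own proof'' to compare against; the right comparison is between your sketch and the failed approach of \cite{ajm} together with the paper's diagnosis of why it fails.

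Your proposal is honest about this: you arrive at the same impasse and say so explicitly in your final paragraph. That is the correct conclusion. The route you take is a mild variant of the one in \cite{ajm}: rather than iterate the self-improving inequality for $\|\gr\|_t$ directly, you first extract a distributional bound $|\{\gr\ge\ell\}\cap B|\lesssim \ell^{-\nu(p-1)/(\nu-p)}$ from the capacity identity and the weighted Sobolev inequality, and then feed this into a mean-value inequality via layer-cake. The obstruction reappears in the same place: the layer-cake integral $\int_B\gr^q$ is finite only for $q$ strictly below the critical exponent $q^*=\nu(p-1)/(\nu-p)$, while the residual $|B|$-power in the Moser estimate vanishes only \emph{at} $q^*$. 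Any choice $q<q^*$ leaves a factor $|B|^{1/q-1/q^*}$ that one then has to absorb---and doing so is exactly the infinite-iteration problem the paper shows to be unresolvable (with a stable constant) via the Proposition in Subsection~\ref{subsec:the_gap}.

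One small technical point worth flagging: the mean-value inequality in Lemma~\ref{lem_moser} is stated for $q\ge p$, whereas $q^*<p$ once $p<\sqrt{\nu}$ (so in particular for $p$ near $1$). You gesture at iterating ``down to exponents below $p$'' via Caccioppoli for $\gr^\alpha$, which is indeed possible, but the constants produced by such a downward iteration would themselves need to be tracked carefully as $p\to1$; this is an additional layer of difficulty on top of the one you identify.

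In short: your sketch is a reasonable reformulation of the known (incomplete) approach, you correctly locate the gap, and the paper does not close it either---it instead proves the weaker Theorem~\ref{nonstable_sobolev_theorem} (with a constant blowing up like $(p-1)^{-(\nu-p)}$) and then pursues the alternative strategy of Theorem~\ref{teo_sobolev_inf} plus a Harnack chaining.
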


In Subsection \ref{subsec:the_gap} we describe the gap in \cite{ajm} in the proof of Conjecture \ref{teo_sobolev}. In Subsection \ref{subsec:new_est}, we prove a new decay estimate of the $p$-Green's kernel, see Theorem \ref{theorem-decayGreen2}, which will lead to the proof of Theorem \ref{thm-intro:main_ric_rd}.

\subsection{Description of the gap in \texorpdfstring{\cite{ajm}}{[10]}}\label{subsec:the_gap}

Conjecture \ref{teo_sobolev} is stated as Theorem 3.6 in \cite{ajm}. The argument there is based on a Moser iteration procedure for the function $\|\gr\|_t = \max_{\Omega \cap B_t}\gr$, which is shown in \cite[(3.38)]{ajm} to satisfy  

\begin{equation}\label{sobolev_iteriamola_00}
\|\gr\|_t \le \hat C (\theta t)^{- \frac{\nu-p}{p}} \|\gr\|_{(1-\theta)t}^{\frac{1}{p}},\qquad\forall\,\theta\in(0,1),
\end{equation}
where
\begin{equation}\label{def_Chat}
    \hat C = \So_{p,\nu}^{\frac{\nu}{p^2}} \bar C_{p, \nu}^{\frac{\nu-p}{p^2}} \eta\big(2t\big)^{\frac{1}{p}} 
\end{equation}
and $\bar C_{p,\nu}$ is defined in \eqref{Cpnu_mag0}. For a chosen sequence $t=t_0>t_1>t_2>\cdots$, setting $\theta=1-t_{k+1}/t_k$ in \eqref{sobolev_iteriamola_00} one obtains for each $k$
\[
\|\gr\|_{t_k} \le \hat C (t_k - t_{k+1})^{- \frac{\nu-p}{p}} \|\gr\|_{t_{k+1}}^{\frac{1}{p}}.
\]
Iterating from 0 to $k$, this gives
\begin{equation}\label{eq:iterata_k}
\|\gr\|_t \leq \hat C^{1+1/p+\cdots+1/p^k}\cdot\Big[(t_0-t_1)(t_1-t_2)^{1/p}\cdots(t_k-t_{k+1})^{1/p^k}\Big]^{-\frac{\nu-p}p}\cdot\|\gr\|_{t_{k+1}}^{1/p^{k+1}}.\end{equation}
In view of the asymptotics of $\gr$ near $o$ (cf. Lemma \ref{teo_localsingular}), the last term is not hard to handle. Thus, for the constant $C_{p,\nu}$ in Conjecture \ref{teo_sobolev} to be bounded as $p \to 1$, we need the choice of $\{t_k\}$ to satisfy
\[\prod_{k\geq0}(t_k-t_{k+1})^{1/p^k}\geq Ae^{-\frac{B}{p-1}}\]
for some constants $A,B$ independent of $p$.

The gap in \cite{ajm} lies in the choice of $t_k$. More precisely, see \cite[pp.822-824]{ajm}: the sequence $\{\sigma_k\}$ constructed therein becomes negative for sufficiently large $k$, which is forbidden by the iteration scheme. In fact, this problem is subtle in view of the following result, which shows that \emph{no iteration} of \eqref{sobolev_iteriamola_00} can lead to a constant $C_{p,\nu}$ which is bounded as $p \to 1$:

\begin{proposition}
    For each $A, B, t_0>0$ there exists a constant $p_0=p_0(A,B,t_0)>1$ such that: for all $1<p\leq p_0$ and all sequences $\{x_k>0\}_{k\geq0}$ with $\sum_{k\geq0}x_k\leq t_0$, we have
    \[\prod_{k\geq0} x_k^{1/p^k}<Ae^{-\frac B{p-1}}.\]
\end{proposition}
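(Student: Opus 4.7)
The plan is to take logarithms and identify the left-hand side as a concave functional of $(x_k)_{k\ge 0}$ on the simplex $\sum x_k \le t_0$. Its maximum is computable in closed form and will turn out to diverge to $-\infty$ strictly faster than $-B/(p-1)$ as $p\to 1^+$, which is precisely the content of the statement.

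Taking $\log$, the claim becomes
\[
\Phi(x) := \sum_{k\ge 0} p^{-k}\log x_k < \log A - \frac{B}{p-1} \qquad \text{for all admissible } (x_k).
\]
To bound $\Phi$, I would introduce the weights $w_k := \frac{p-1}{p}p^{-k}$, which sum to $1$ since $\sum_{k\ge 0} p^{-k} = \frac{p}{p-1}$, and the ``optimal'' sequence $c_k := t_0 w_k = \frac{t_0(p-1)}{p}p^{-k}$, which by construction satisfies $\sum c_k = t_0$. Weighted AM--GM applied to $(x_k/c_k)$ gives
\[
\prod_{k\ge 0}(x_k/c_k)^{w_k} \le \sum_{k\ge 0} w_k (x_k/c_k) = \frac{1}{t_0}\sum_{k\ge 0} x_k \le 1,
\]
so after taking $\log$ and multiplying by $\frac{p}{p-1}$ (which turns the weights $w_k$ back into $p^{-k}$) we obtain $\Phi(x) \le \Phi(c)$. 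Lagrange multipliers would yield the same optimal $c_k$; the AM--GM step is just the cleanest way to certify optimality.

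It remains to evaluate $\Phi(c)$ and compare with $\log A - B/(p-1)$. Using the identities $\sum_{k\ge 0} p^{-k} = \frac{p}{p-1}$ and $\sum_{k\ge 0} k p^{-k} = \frac{p}{(p-1)^2}$,
\[
\Phi(c) = \frac{p}{p-1}\log\frac{t_0(p-1)}{p} - \frac{p\log p}{(p-1)^2}.
\]
Writing $\epsilon = p-1$ and using $\log(1+\epsilon) = \epsilon + O(\epsilon^2)$, this simplifies to $\Phi(c) = \epsilon^{-1}\bigl(\log(t_0\epsilon) - 1\bigr) + O(1)$ as $\epsilon\to 0^+$. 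The dominant contribution is $\epsilon^{-1}\log\epsilon \to -\infty$, so for any prescribed $A,B,t_0>0$ we have $\Phi(c) + B/\epsilon \to -\infty$; hence $\Phi(c) < \log A - B/\epsilon$ for all sufficiently small $\epsilon>0$, and one takes $p_0 = 1+\epsilon_0$ with $\epsilon_0$ any such threshold. I do not foresee a genuine obstacle: the only qualitative point is that the optimal $c_k$ decay geometrically with ratio $1/p$, so $\log c_k$ picks up an extra $\log(p-1)$ factor that produces the superlinear blow-up $-\log\epsilon/\epsilon$ in the maximum, which dominates any $-B/\epsilon$.
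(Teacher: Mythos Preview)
Your proof is correct and takes a genuinely different route from the paper. The paper argues combinatorially: it isolates the set $I=\{k:x_k>e^{-B-2}\}$, bounds $\#I\le e^{B+2}t_0+1$ from $\sum x_k\le t_0$, and then estimates $\sum_{k\ge0}p^{-k}\log x_k$ by splitting into $k\in I$ and $k\notin I$; the tail contributes at most $-(B+2)\sum_{k\ge\#I}p^{-k}$, which for $p$ close to $1$ is roughly $-(B+2)/(p-1)$ and dominates the rest. You instead recognise $\Phi(x)=\sum p^{-k}\log x_k$ as concave on the simplex $\sum x_k\le t_0$ and use weighted AM--GM to locate its exact maximiser $c_k=t_0(p-1)p^{-k-1}$, then compute $\Phi(c)$ in closed form and read off the asymptotics $\Phi(c)\sim \epsilon^{-1}\log\epsilon$ as $\epsilon=p-1\to 0$. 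Your approach is sharper (it gives the true supremum, not just an upper bound) and more transparent about \emph{why} no iteration can work: the optimal sequence decays geometrically with ratio $1/p$, forcing a $\log(p-1)$ loss at each step. It is worth noting that this optimal choice $c_k$ is precisely (up to scaling) the sequence $t_k-t_{k+1}$ with $t_k=t/p^k$ that the paper uses immediately afterwards in its Theorem~\ref{nonstable_sobolev_theorem}; your argument thus also explains why that choice is the best possible. One minor slip: the ``$O(1)$'' in your expansion of $\Phi(c)$ actually contains an additional $\log(t_0\epsilon)$ term, but since this also tends to $-\infty$ it only strengthens the conclusion.
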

\begin{proof}
    Increasing $t_0$ only strengthens the statement. Thus we increase $t_0$ so that $e^{B+2}t_0$ is an integer. Let $I=\big\{k\in\mathbb{N}: x_k>e^{-B-2}\big\}$, thus $\# I\leq e^{B+2}t_0+1$. We can estimate
    \[\begin{aligned}
        \sum_{k\geq0}\frac1{p^k}\log x_k &\leq (\# I)\log\max(1,t_0)-(B+2)\sum_{k\notin I}\frac1{p^k} \\
        &\leq \big(e^{B+2}t_0+1\big)\log\max(1,t_0)-(B+2)\sum_{k=e^{B+2}t_0+1}^\infty\frac1{p^k} \\
        &= \big(e^{B+2}t_0+1\big)\log\max(1,t_0)-(B+2)\frac{p^{-e^{B+2}t_0}}{p-1}.
    \end{aligned}\]
    Now we choose $p_0$ sufficiently close to 1, such that
    \[(B+1)p_0^{-e^{B+2}t_0}>B\ \ \text{and}\ \ \frac{p_0^{-e^{B+2}t_0}}{p_0-1}>\big(e^{B+2}t_0+1\big)\log\max(1,t_0)-\log A\]
    With this choice, for all $1<p\leq p_0$ we have
    \[\sum_{k\geq0}\frac1{p^k}\log x_k<\log A+\frac{B}{p-1}. \qedhere\]
\end{proof}

Notwithstanding the previous proposition, it is convenient to state the estimate for the Green kernel with the unbounded constant as $p\to 1^+$, given its potential use in applications beyond the present work.

\begin{theorem}\label{nonstable_sobolev_theorem}
Let $\Omega \subset M$ be a connected open set, and denote by $r$ the distance from a fixed origin $o \in \Omega$. Assume that $\Omega$ supports the weighted Sobolev inequality \eqref{sobolev_weighted} for some $p \in (1,\nu)$, constant $\So_{p,\nu}>0$ and weight $\eta$ satisfying \eqref{def_eta_sobolev}. Then, $\Delta_p$ is non-parabolic on $\Omega$ and, letting $\gr(x)$ be the Green kernel of $\Delta_p$ on $\Omega$ with pole at $o$,
\begin{equation}\label{nonstable_upper_consobolev}
\gr(x) \le C_{p,\nu}^{\frac{1}{p-1}} \eta\big(2r(x)\big)^{\frac{1}{p-1}} r(x)^{- \frac{\nu-p}{p-1}}, \qquad \forall \, x \in  \Omega \backslash \{o\},
\end{equation}
where
$$
C_{p,\nu}=\frac{\So_{p,\nu}^{\frac{\nu}{p}} \bar C_{p, \nu}^{\frac{\nu-p}{p}} (ep)^{\nu-p}}{(p-1)^{\nu-p}}
$$
and $\bar{C}_{p,\nu}$ is as in \eqref{Cpnu_mag0}. In particular, if
\begin{equation}\label{nonstable_ipo_eta_2}
\eta(t) = o \left( t^{\nu - p} \right) \qquad \text{as } \, t \ra \infty.
\end{equation}
then
 $\gr(x) \ra 0$ as $r(x) \ra \infty$ in $\Omega$.
\end{theorem}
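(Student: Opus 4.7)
The plan is to iterate the Moser-type recursion \eqref{sobolev_iteriamola_00} (already derived in \cite{ajm}) along a decreasing sequence of radii $t_0 > t_1 > \cdots \to 0$ with $t_0 = r(x)$. Since the excerpt already displays the telescoped form \eqref{eq:iterata_k}, the entire argument reduces to a clever choice of $\{t_k\}$, followed by a limit $k\to\infty$ and a careful bookkeeping of constants. Note that at level $k$ the constant in \eqref{sobolev_iteriamola_00} reads $\hat C_k \propto \eta(2t_k)^{1/p}$, and since $\eta$ is non-decreasing and $t_k \le t_0$ we may replace each $\hat C_k$ by the single value $\hat C$ of \eqref{def_Chat} evaluated at $t = r(x)$, so that the $\hat C$-factor in \eqref{eq:iterata_k} collapses to $\hat C^{p/(p-1)}$ in the limit.

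Because we have dropped the (unattainable) requirement that the final constant remain bounded as $p \to 1$, the choice of $\{t_k\}$ becomes an unconstrained optimization: maximize $\sum_{k\ge 0}p^{-k}\log(t_k - t_{k+1})$ subject to $\sum_k(t_k - t_{k+1}) \le r(x)$. A direct Lagrange multiplier calculation singles out the geometric progression $t_k = r(x)/p^k$, equivalently $t_k - t_{k+1} = r(x)(p-1)/p^{k+1}$, for which all $\theta_k = (p-1)/p \in (0,1)$ and $t_k \to 0$. Substituting into \eqref{eq:iterata_k} and using the elementary inequality $p^{1/(p-1)}\le e$ to absorb a residual power of $p$ produces precisely the factor $(ep)^{(\nu-p)/(p-1)}(p-1)^{-(\nu-p)/(p-1)}\,r(x)^{-(\nu-p)/(p-1)}$ appearing in the right-hand side of \eqref{nonstable_upper_consobolev}, while $\hat C^{p/(p-1)}$ with $\hat C$ as in \eqref{def_Chat} supplies the $\So_{p,\nu}$, $\bar C_{p,\nu}$ and $\eta(2r(x))$ dependences, matching the stated formula for $C_{p,\nu}$.

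The only remaining step is to verify that the tail factor $\|\gr\|_{t_{k+1}}^{1/p^{k+1}}$ in \eqref{eq:iterata_k} tends to $1$ as $k\to\infty$. For this one invokes Theorem \ref{teo_localsingular}(1), giving $\gr \sim \mu(r)$ at the pole, so that $\|\gr\|_{t_{k+1}}$ is controlled by a polynomial in $1/t_{k+1} = p^{k+1}/r(x)$; taking the $p^{-(k+1)}$-th power then leaves an exponent of size $O(k/p^{k+1})$, which vanishes in the limit. Non-parabolicity of $\Delta_p$ on $\Omega$ and the decay $\gr(x) \to 0$ under \eqref{nonstable_ipo_eta_2} follow at once from \eqref{nonstable_upper_consobolev}. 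I expect the genuine obstacle to be nothing more than identifying the sequence $t_k = r(x)/p^k$; once that is in place, the remainder is careful arithmetic with the exponents, which the proposition preceding the theorem shows is essentially tight.
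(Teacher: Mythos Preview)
Your proposal is correct and follows essentially the same route as the paper: both pick the geometric sequence $t_k = r(x)/p^k$, evaluate the resulting product $\prod_{k\ge 0}(t_k-t_{k+1})^{1/p^k}$ in closed form, and then absorb the residual power of $p$ via $p^{1/(p-1)}\le e$ to obtain the stated constant. Your additional remarks on the monotonicity of $\eta$, the Lagrange-multiplier motivation, and the tail factor $\|\gr\|_{t_{k+1}}^{1/p^{k+1}}\to 1$ via Theorem~\ref{teo_localsingular} are all correct elaborations of steps the paper leaves implicit.
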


\begin{proof}
    Choosing $t_{k}=t/p^k$, we have
    \begin{align*}
        \prod_{k\geq 0} (t_k - t_{k+1})^{1/p^k} &=\prod_{k\geq 0} \left(\frac{(p-1)t}{p^{k+1}}\right)^{1/p^k} = \left(\frac{p-1}{p}t\right)^{\sum_{k\geq 0} 1/p^k} p^{-\sum_{k\geq0}\frac{k}{p^k}}
        \\&=\left(\frac{p-1}{p}t\right)^{\frac{p}{p-1}} p^{-\frac{p}{(p-1)^2}}.
    \end{align*}
    Taking the limit as $k\to \infty$ in \eqref{eq:iterata_k}, we conclude
    \begin{equation*}
        \Vert \gr \Vert_t \leq \hat{C}^{\frac{p}{p-1}}\left( \frac{p-1}{p} t \right)^{-\frac{\nu-p}{p-1}} p^{\frac{(\nu-p)}{(p-1)^2}} \leq  \hat{C}^{\frac{p}{p-1}}\big(({p-1}) t \big)^{-\frac{\nu-p}{p-1}} (ep)^{\frac{\nu-p}{p-1}},
    \end{equation*}
    which because of the definition of $\hat{C}$ in \eqref{def_Chat} gives the desired bound.
\end{proof}

\subsection{New decay estimates}\label{subsec:new_est}

Our goal of this subsection is to prove the following decay estimate for the $p$-Green's kernel:

\begin{theorem}\label{theorem-decayGreen2}
    Let $M^n$ be a complete manifold satisfying $\dous$ and $\neuuno$, with doubling dimension $\nu=\log_2 C_\Dou$. Assume that there exist constants $C_{\RD}>0$, $b\in (1, \nu]$ such that
    \begin{equation}\label{eq_reverse_doub}
    \frac{|B_t(o)|}{|B_s(o)|} \ge C_{\RD} \left( \frac{t}{s}\right)^b\qquad\forall \, t \ge s > 0.
    \end{equation}
    Then, for each $p_0 \in (1,b)$ and $p\in(1,p_0]$, the Green kernel of $\Delta_p$ on $M$ with pole at $o \in M$ satisfies
    \begin{equation}\label{eq_decaygreen}
    \gr(x) \le C^{\frac{1}{p-1}} \left[ \sup_{t \in (0,2r(x))} \frac{t^\nu}{|B_t(o)|}\right]^{\frac{1}{p-1}} r(x)^{- \frac{\nu-p}{p-1}}, \qquad \forall \, x \in  M \backslash \{o\},
    \end{equation}
    where $C$ is a constant only depending on $C_\Dou$, $\Po_{1,1}$, $p_0$, $b$ and $C_{\RD}$.
\end{theorem}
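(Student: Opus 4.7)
The plan is to bypass the flawed Moser iteration of Subsection \ref{subsec:the_gap} by a direct capacitary argument based on the identity $\capac_p(\{\gr\ge\ell\},M)=\ell^{1-p}$ of \eqref{eq_nice_pcapac}. This identity converts pointwise control of $\gr(x)$ into capacity control of its superlevel sets, so it suffices to prove two things: (i) after a bounded Harnack correction, the superlevel set $\{\gr\ge\gr(x)\}$ contains $\ol B_{r(x)}(o)$; and (ii) $\capac_p(\ol B_r(o),M)\ge c\,r^{\nu-p}/M(r)$, where $M(r):=\sup_{t\le r}t^\nu/|B_t(o)|$. Throughout, Theorem \ref{theorem-tewo2} together with Remark \ref{rem_Pp} supplies the weighted Sobolev inequality with weight $\eta(t)=t^\nu/|B_t(o)|$ and constant $\So_{p,\nu}$ uniform in $p\in(1,p_0]$, while Theorem \ref{teo_uniHarnack} supplies a Harnack inequality with constant $\Ha^{1/(p-1)}$ with $\Ha$ uniform.

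For the capacity lower bound (ii), I would insert any $\psi\in\lip_c(M)$ with $\psi\ge 1$ on $\ol B_r(o)$ into the weighted Sobolev inequality. Choosing $\alpha=\alpha(C_\RD,b)\in(0,1)$ so that reverse doubling forces $|B_{\alpha r}|\le\tfrac12|B_r|$, and using $|B_{\alpha r}|\ge\alpha^\nu|B_r|$ from regular doubling, one gets $\eta(r(y))\le\alpha^{-\nu}r^\nu/|B_r|$ throughout the annulus $B_r\setminus B_{\alpha r}$. Integrating the resulting lower bound for $\eta^{-p/(\nu-p)}$ over the annulus and taking the infimum over admissible $\psi$ produces
\[
\capac_p(\ol B_r(o),M)\ \ge\ c_1\,\frac{|B_r(o)|}{r^p}\ \ge\ c_1\,\frac{r^{\nu-p}}{M(r)},
\]
with $c_1$ depending uniformly on $C_\Dou,\Po_{1,1},p_0,b,C_\RD$; the second inequality is the tautological $|B_r|\ge r^\nu/M(r)$.

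For (i), fix $x$ with $r(x)=r$. By the chain condition implicit in $\dous$ and $\neuuno$, any point of $\partial B_r(o)$ can be connected to $x$ by a chain of $N=N(C_\Dou,\Po_{1,1})$ balls of radius $r/16$ staying inside $B_{5r/4}(o)\setminus\ol B_{3r/4}(o)$; each sixfold-enlarged ball stays disjoint from $o$, so Theorem \ref{teo_uniHarnack} applies to $\gr$ on each. Chaining yields $\inf_{\partial B_r(o)}\gr\ge\Ha^{-N/(p-1)}\gr(x)$. Since $\gr$ is $p$-harmonic on $B_r(o)\setminus\{o\}$ and blows up at $o$ by Theorem \ref{teo_localsingular}, comparison on $B_r(o)\setminus\ol B_\eps(o)$ and letting $\eps\to 0$ propagates the lower bound to all of $B_r(o)$. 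Therefore $\{\gr\ge\Ha^{-N/(p-1)}\gr(x)\}\supset\ol B_r(o)$, and combining with (ii) and \eqref{eq_nice_pcapac} one obtains
\[
\Ha^N\gr(x)^{1-p}\ =\ \capac_p\bigl(\{\gr\ge\Ha^{-N/(p-1)}\gr(x)\},M\bigr)\ \ge\ \capac_p(\ol B_r(o),M)\ \ge\ c_1\,r^{\nu-p}/M(r),
\]
which rearranges to \eqref{eq_decaygreen} with $C=\Ha^N/c_1$, upon the trivial inequality $M(r)\le\sup_{t<2r}t^\nu/|B_t(o)|$.

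The main obstacle is keeping uniform control in $p$ along the Harnack chain: since the per-step factor $\Ha^{1/(p-1)}$ blows up as $p\to1^+$, any chain whose length depended on $r$ would yield an unusable bound. The algebraic point that makes the argument work is that the exponent $1-p$ in the capacity identity converts the accumulated $\Ha^{N/(p-1)}$ into a harmless $\Ha^N$, precisely the cancellation that the Moser iteration of Subsection \ref{subsec:the_gap} failed to engineer. A secondary delicacy is ensuring the chain stays uniformly away from $o$ with $N$ depending only on $C_\Dou$ and $\Po_{1,1}$, which is handled by the uniform quasi-convexity of complete doubling--Poincaré spaces together with the reverse doubling hypothesis that forces the annulus around $\partial B_r(o)$ to carry a nontrivial share of $|B_r|$.
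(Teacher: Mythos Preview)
Your approach is essentially the paper's own: both combine the capacity identity \eqref{eq_nice_pcapac}, a weighted-Sobolev lower bound on $\capac_p(B_r(o),M)$ (the content of Theorem \ref{teo_sobolev_inf}), and a Harnack chain along $\partial B_r(o)$ made possible by Minerbe's RCA property (Proposition \ref{teo_minerbe} and Corollary \ref{cor_sobolev_inf}); you merely swap the order, chaining first and then invoking capacity, and you compute the capacity lower bound on an annulus rather than on the full ball, which lets you reach $c_1|B_r|/r^p$ without the separate volume lower bound the paper extracts in the proof of Theorem \ref{teo_sobolev_inf}. One small correction: the RCA property only places the connecting path in $\ol B_r(o)\setminus B_{r/\varsigma_0}(o)$ with $\varsigma_0=\varsigma_0(C_\Dou,\Po_{1,1},C_\RD,b)$, not in the thin annulus $B_{5r/4}\setminus\ol B_{3r/4}$ you wrote, so the chain-ball radius must scale like $r/\varsigma_0$ (this is why the paper sets $\hat t=t/(24\varsigma_o(t))$) and the chain length $N$ then depends on $C_\RD,b$ as well---a dependence you do acknowledge in your final paragraph.
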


This theorem recovers the full strength of \cite[Theorem 3.23]{ajm}, up to the fact that the constant $C$ here is not explicitly computed.

The proof of Theorem \ref{theorem-decayGreen2} consists of two steps. First, we use a capacity argument to bound $\min_{\p B_t(o)}\gr$ from above for all radii $t$. This argument is similar to the one employed in \cite[Theorem 3.11]{afnp}. Then, we use a Harnack inequality and a chaining argument to estimate $\max_{\p B_t(o)}\gr$ in terms of $\min_{\p B_t(o)}\gr$, thus obtaining the desired upper bound for $\gr$.

The following theorem realizes the first step:

\begin{theorem}\label{teo_sobolev_inf}
    Let $\Omega \subset M$ be a connected open subset of $M$, possibly $\Omega = M$, and denote by $r$ is the distance from a fixed origin $o \in \Omega$. Assume that $\Omega$ supports the weighted Sobolev inequality \eqref{sobolev_weighted} for some $p \in (1,\nu)$, constant $\So_{p,\nu}>0$ and weight $\eta$ satisfying \eqref{def_eta_sobolev}. Then, $\Delta_p$ is non-parabolic on $\Omega$ and, letting $\gr(x)$ be the Green kernel of $\Delta_p$ on $\Omega$ with pole at $o$, for each $t>0$ we have
    \begin{equation}\label{stable_upper_consobolev}
    \inf_{\partial B_t(o)} \gr \le \hat C_{p,\nu}^{\frac{1}{p-1}} \eta\big(2t\big)^{\frac{1}{p-1}} t^{- \frac{\nu-p}{p-1}},
    \end{equation}
    where
    $$
    \hat C_{p,\nu} \doteq \So^{\frac{\nu}{p}}_{p,\nu} \ 2^{\frac{(\nu+2p)(\nu-p)}{p}}
    $$
    is bounded as $p \ra 1$ if so is $\So_{p,\nu}$.
\end{theorem}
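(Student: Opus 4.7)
The proof is a capacity-theoretic argument: I will estimate $m = \inf_{\partial B_t(o)}\gr$ from above by relating it to the $p$-capacity of $\overline{B_t(o)}$ in $\Omega$, and then lower-bound this capacity using the weighted Sobolev inequality.

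The first step establishes the containment $\overline{B_t(o)}\subseteq\{\gr\ge m\}$. Since $\gr\to\infty$ at $o$ by Theorem \ref{teo_localsingular}, the $p$-harmonic minimum principle applied on the punctured ball $B_t(o)\setminus\overline{B_\varepsilon(o)}$, together with $\varepsilon\to 0$, yields $\gr\ge m$ throughout $B_t(o)\setminus\{o\}$. Monotonicity of the $p$-capacity combined with \eqref{eq_nice_pcapac} then gives $\capac_p(\overline{B_t(o)},\Omega)\le m^{1-p}$, i.e.\ $m\le\capac_p(\overline{B_t(o)},\Omega)^{-1/(p-1)}$.

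The second step produces a matching lower bound on this $p$-capacity. For any $\psi\in\lip_c(\Omega)$ with $\psi\ge 1$ on $\overline{B_t(o)}$, the weighted Sobolev inequality \eqref{sobolev_weighted} and the monotonicity of $\eta$ give
\[
\int_\Omega|\D\psi|^p\ge\So_{p,\nu}^{-1}\Bigl(\int_{B_t(o)}\eta(r)^{-p/(\nu-p)}\Bigr)^{(\nu-p)/\nu}\ge \So_{p,\nu}^{-1}\eta(2t)^{-p/\nu}|B_t(o)|^{(\nu-p)/\nu},
\]
so to conclude it suffices to produce a volume lower bound of the form $|B_t(o)|\ge C(\nu,p,\So_{p,\nu})\,t^\nu/\eta(2t)$. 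I would derive this by iterating the weighted Sobolev across dyadic scales: applying \eqref{sobolev_weighted} to the tent function $\varphi(x)=(1-d(x,o)/t)_+$, which is $\ge 1/2$ on $B_{t/2}(o)$ and has $|\D\varphi|\le 1/t$ supported in $B_t(o)$, yields the scale-recursion
\[
|B_t(o)|\ge 2^{-p}\So_{p,\nu}^{-1}\,\eta(t/2)^{-p/\nu}\,t^p\,|B_{t/2}(o)|^{(\nu-p)/\nu}.
\]
Iterating with $\alpha=(\nu-p)/\nu<1$, the geometric series $\sum_{k=0}^\infty \alpha^k=\nu/p$ produces exactly the exponent $\nu$ on $t$; the Euclidean-type behavior $|B_r(o)|\sim\omega_n r^n$ for $r\to 0^+$, available from the smoothness of the manifold at $o$, ensures $|B_{t/2^k}(o)|^{\alpha^k}\to 1$; and the monotonicity of $\eta$ absorbs the intermediate factors $\eta(t/2^j)$ into a single factor bounded above by $\eta(2t)^{-1}$.

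Combining the three steps yields $m^{p-1}\le \So_{p,\nu}^{\nu/p}\,2^{c(\nu,p)}\,\eta(2t)\,t^{-(\nu-p)}$, which gives the claimed inequality after taking $(p-1)$-th roots. The main obstacle is the careful bookkeeping in the dyadic iteration: one must track the accumulated constants through infinitely many applications of the Sobolev inequality and verify they remain uniformly bounded as $p\to 1^+$. The two key facts making this possible are the convergence of the geometric series $\sum\alpha^k\to\nu/p$, which controls the accumulation of factors of $\So_{p,\nu}$ and $2^p$, and the polynomial decay $|B_r(o)|\gtrsim r^n$ near $o$, which makes $\alpha^k\log|B_{t/2^k}(o)|\to 0$ and hence $|B_{t/2^k}(o)|^{\alpha^k}\to 1$.
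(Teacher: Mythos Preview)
Your proposal is correct and follows the same two-step capacity argument as the paper: first bound $\inf_{\partial B_t}\gr$ by $\capac_p(B_t,\Omega)^{-1/(p-1)}$ via the inclusion $B_t\subset\{\gr\ge m\}$ and \eqref{eq_nice_pcapac}, then lower-bound the capacity using the weighted Sobolev inequality together with a volume bound $|B_t|\gtrsim t^\nu/\eta(2t)$. The only difference is that the paper outsources the volume lower bound to the literature (Akutagawa \cite{akut}, Carron \cite{carron}, Pigola--Setti--Troyanov \cite{pst}), obtaining the explicit constant $\bar C_{p,\nu}=2^{-\nu(\nu+2p)/p}\So_{p,\nu}^{-\nu/p}\eta(2t)^{-1}$, whereas you reproduce that argument inline via the dyadic tent-function iteration; your bookkeeping actually yields the slightly sharper constant $2^{-\nu^2/p}$ in place of $2^{-\nu(\nu+2p)/p}$, hence a $\hat C_{p,\nu}$ smaller than the one stated.

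Two small points to tighten: (i) the paper first reduces to a relatively compact smooth $\Omega$ by exhaustion, which guarantees the Green kernel exists and handles the case $B_t\not\Subset\Omega$ (where $m=0$ and there is nothing to prove) --- you should mention this; (ii) the non-parabolicity claim in the statement follows immediately from your capacity lower bound $\capac_p(B_t,\Omega)\ge\So_{p,\nu}^{-1}\eta(2t)^{-p/\nu}|B_t|^{(\nu-p)/\nu}>0$, so it is worth making that explicit.
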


\begin{proof}
By taking a smooth exhaustion $\Omega_j \uparrow \Omega$ by connected domains, it is enough to prove the result for $\gr$ the kernel of a relatively compact, smooth open subset, still called $\Omega$. In particular, $\gr$ is smooth and vanishes on $\partial \Omega$. Write 
\[
f(t) \doteq \min_{\partial B_t} \gr.
\] 
If $f(t) =0$, there is nothing to prove. Otherwise, since $\Omega$ is connected, we have $B_t \Subset \Omega$ and, as a consequence of the strong maximum principle,
\[
B_t \subset \big\{ \gr > f(t) \big\}.
\]
By \eqref{eq_nice_pcapac} and the monotonicity of the $p$-capacity, it follows that
\[
\capac_p(B_t,\Omega) \le \capac_p\left(\{ \gr > f(t)\}, \Omega \right) = f(t)^{1-p}.
\]
Hence, 
\[
f(t) \le \Big( \capac_p(B_t,\Omega) \Big)^{-\frac{1}{p-1}}.
\]
Next, we observe that in view of the weighted Sobolev inequality, for each $\psi \in \lip_c(\Omega)$ with $\psi \ge 1$ on $B_t$ we have
\[
\int_\Omega |\nabla \psi|^p \ge \So_{p,\nu}^{-1} \left( \int_{\Omega} \eta(r)^{-\frac{p}{\nu-p}} |\psi|^{\frac{\nu p}{\nu - p}}\right)^{\frac{\nu-p}{\nu}} \ge \So_{p,\nu}^{-1} \eta(t)^{-\frac{p}{\nu}} |B_t|^{\frac{\nu-p}{\nu}}.
\]
Here, in the last inequality we restricted the integral to $B_t$ and used the monotonicity of $\eta$. By taking the infimum over $\psi$, 
\[
\capac_p(B_t,\Omega) \ge \So_{p,\nu}^{-1} \eta(t)^{-\frac{p}{\nu}} |B_t|^{\frac{\nu-p}{\nu}}.
\]
Thus, 
\[
f(t) \le \So_{p,\nu}^{\frac{1}{p-1}} \eta(t)^{\frac{p}{\nu(p-1)}} |B_t|^{-\frac{\nu-p}{\nu(p-1)}}
\]
Next, since for each $\psi \in \lip_c(\overline{B}_{t})$ vanishing on $\partial B_t$ the unweighted Sobolev inequality
\[
\eta(2t)^{-\frac{p}{\nu}} \left(\int_{B_{2t}} |\psi|^{\frac{\nu p}{\nu - p}}\right)^{\frac{\nu-p}{\nu}} \le 
\left(\int_{\Omega} \eta(r)^{-\frac{p}{\nu-p}} \psi^{\frac{\nu p}{\nu - p}}\right)^{\frac{\nu-p}{\nu}}  \le \So_{p,\nu} \int_\Omega |\nabla \psi|^p = \So_{p,\nu} \int_{B_{2t}} |\nabla \psi|^p
\]
holds, by the argument in \cite[Proposition 2.1]{akut} (see also \cite{carron} and \cite[Proposition 3.1]{pst}), there exists an explicit constant $\bar C_{p,\nu}$ depending on $\So_{p,\nu} \eta(2t)^{\frac{p}{\nu}}$ such that
\[
|B_s| \ge \bar C_{p,\nu} s^\nu \qquad \text{for each } \, s \in (0,t]
\]
By (see \cite[p.375 line 10]{pst}), the constant is explicitly given by 
\[
\bar C_{p,\nu} = 2^{- \frac{\nu(\nu + 2p)}{p}} \So_{p,\nu}^{-\frac{\nu}{p}} \eta(2t)^{-1}. 
\]
Indeed, following the notation in \cite{pst}, it holds
\[
q \doteq \frac{\nu p}{\nu-p}, \quad S_{p,q} \doteq \left[\So_{p,\nu} \eta(2t)^{\frac{p}{\nu}} \right]^{-\frac{1}{p}}, \quad \alpha \doteq \frac{\nu}{\nu+p}, \quad \bar \alpha \doteq \frac{\nu(\nu + p)}{p^2}.
\]
By using again the monotonicity of $\eta$, we obtain 
\[
f(t) \le \left(\bar C_{p,\nu}^{-\frac{\nu-p}{\nu}} \So_{p,\nu}\right)^{\frac{1}{p-1}} \eta(t)^{\frac{p}{\nu(p-1)}} t^{-\frac{\nu-p}{p-1}} \le \hat C_{p,\nu}^{\frac{1}{p-1}} \eta(2t)^{\frac{1}{p-1}} t^{-\frac{\nu-p}{p-1}}
\]
with
\[
\hat C_{p,\nu} \doteq \So^{\frac{\nu}{p}}_{p,\nu} 2^{\frac{(\nu+2p)(\nu-p)}{p}}. \qedhere
\]
\end{proof}

Next, we need to bound the ratio between the minimum and maximum of $\gr$ on $\p B_t(o)$. This can be done by finding a path connecting the extremal points of $\gr$, then covering this path with small balls, and finally using the Harnack inequality on each ball. Notice that the balls must avoid the pole $o$ when applying the Harnack inequality therein. Hence, we need a quantitative lower bound for the best radius $t'$ so that $\p B_t(o)$ is connected outside $B_{t'}(o)$.

\begin{definition}
    Let $M$ be a complete Riemannian manifold. For each fixed origin $o$, define the RCA (``relatively connected annuli") function 
    \[\begin{array}{c}
        \varsigma_o \ : \ (0,\infty) \to [1,\infty), \\[0.5cm]
	\disp \varsigma_o(t) = \inf \left\{ c \in [1,\infty) \ : \
    \begin{array}{l}
	\text{every $x,y \in \partial B_t(o)$ can be joined} \\
	\text{by a path lying in $\overline{B_t(o)}\backslash B_{t/c}(o)$}
	\end{array} \right\}
    \end{array}\]
\end{definition}
Observe that
\[
\varsigma_o(t) \le \frac{t}{{\rm inj}(o)}<\infty,
\]
where ${\rm inj}(o)$ is the injectivity radius of $o$. The terminology RCA was introduced by  V. Minerbe in \cite{minerbe}, where the author shows the following remarkable result: 

\begin{proposition}[\cite{minerbe}, Proposition 2.8]\label{teo_minerbe}
	Let $M$ be a connected, complete manifold satisfying $\dous, \neuuno$. Suppose further that there exists $C_{\RD}>0$, $b>1$ and a point $o \in M$ such that
	\begin{equation}\label{eq_reversevol_2}
		\forall t \ge s >0, \qquad \frac{|B_t(o)|}{|B_s(o)|} \ge C_{\RD} \left( \frac{t}{s}\right)^b.
	\end{equation}
	Then, there exists $\varsigma_0>1$ depending on $C_\Dou, \Po_{1,1}, C_{\RD}, b$ such that 
	\[
	\varsigma_o(t) \le \varsigma_0 \qquad \forall \, t \in (0,\infty).
	\]
\end{proposition}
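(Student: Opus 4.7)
The plan is to argue by contradiction. If $\varsigma_o$ were unbounded, for each $n$ one could find $t_n > 0$ and points $x_n, y_n \in \partial B_{t_n}(o)$ such that any path joining them inside $\bar B_{t_n}(o)$ must enter the small ball $B_{t_n/c_n}(o)$, with $c_n \to \infty$. Let $U_n$ be the connected component of $x_n$ in $\bar B_{t_n}(o)\setminus B_{t_n/c_n}(o)$ and $V_n$ the union of the remaining components, which by hypothesis contains $y_n$. The goal is to build a Lipschitz function $\phi_n$ on $M$ that separates $U_n$ from $V_n$, with $|\nabla \phi_n|$ concentrated in a thin shell around the bottleneck $\partial B_{t_n/c_n}(o)$, and to derive a contradiction by inserting $\phi_n$ in $\neuuno$ on $B_{2t_n}(o)$. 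The cancellation that makes the argument work comes from $b>1$: reverse doubling gives $|B_{t_n/c_n}(o)|/|B_{t_n}(o)| \le C c_n^{-b}$, so with $|\nabla \phi_n| \le C c_n/t_n$ concentrated in the shell one obtains
\[
t_n \fint_{B_{4t_n}(o)} |\nabla \phi_n| \;\le\; C\, c_n^{1-b} \;\xrightarrow[c_n\to\infty]{}\; 0,
\]
which is inconsistent with the natural lower bound $\fint_{B_{2t_n}(o)} |\phi_n - \bar\phi_n| \ge c_0 > 0$.

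\textbf{Volume bounds for $U_n$ and $V_n$.} The lower bound on the Poincar\'e left-hand side forces both $\{\phi_n = 0\}$ and $\{\phi_n = 1\}$ to occupy a definite proportion of $B_{2t_n}(o)$, and this in turn amounts to the estimate $|U_n|, |V_n| \ge \varepsilon\,|B_{t_n}(o)|$. I would prove this using only $\dous$: pick $z_n$ on a minimizing geodesic from $o$ to $x_n$ at distance $3t_n/4$ from $o$; for $c_n$ larger than a universal constant the ball $B_{t_n/8}(z_n)$ lies in $\bar B_{t_n}(o)\setminus B_{t_n/c_n}(o)$ and is connected to $x_n$ by the remaining geodesic segment, which also stays in that annulus, so $B_{t_n/8}(z_n) \subset U_n$. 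Since $B_{t_n/4}(o) \subset B_{t_n}(z_n)$, iterated doubling yields $|B_{t_n/8}(z_n)| \ge C^{-1}|B_{t_n}(o)|$ with a constant depending only on $C_\Dou$; the same construction at $y_n$ handles $V_n$.

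\textbf{Cutoff function and Poincar\'e step.} Set $\rho_n(x) = \min\bigl(1, (c_n/t_n)\,d(x,\bar B_{t_n/c_n}(o))\bigr)$, so that $\rho_n \equiv 0$ on $\bar B_{t_n/c_n}(o)$, $\rho_n \equiv 1$ outside $B_{2t_n/c_n}(o)$, and $|\nabla \rho_n| \le c_n/t_n$. Letting $\tilde U_n$ be the connected component of $M\setminus B_{t_n/c_n}(o)$ containing $U_n$, define $\phi_n = \rho_n\cdot \chi_{\tilde U_n}$; since $\rho_n$ vanishes on $\bar B_{t_n/c_n}(o)$, this is a well-defined Lipschitz function on $M$ with $|\nabla \phi_n| \le c_n/t_n$ supported in $B_{2t_n/c_n}(o)$. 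Plugging $\phi_n$ (after a harmless compactly supported cutoff equal to $1$ on $B_{4t_n}(o)$) into the $(1,1)$-Poincar\'e inequality on $B_{2t_n}(o)$ and combining with the volume bounds turns the heuristic in the Strategy paragraph into a rigorous contradiction.

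\textbf{Main obstacle.} The delicate point is that a priori the component $\tilde U_n$ could also contain $y_n$, via a path in $M\setminus B_{t_n/c_n}(o)$ going around the outside of $\bar B_{t_n}(o)$, in which case $\phi_n \equiv 1$ on $V_n$ and the left-hand side of the Poincar\'e inequality collapses. Excluding this pathology is the heart of Minerbe's original argument in \cite{minerbe}: roughly speaking, reverse doubling with $b>1$ forces $M$ to have a single ``thick end'', so that a detour around $\bar B_{t_n}(o)$ in $M\setminus B_{t_n/c_n}(o)$ cannot produce a second component of volume comparable to $|B_{t_n}(o)|$ without violating doubling. Once this end-control is established, the cutoff-and-Poincar\'e step above closes the argument.
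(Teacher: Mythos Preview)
The paper does not supply a proof of this proposition; it is quoted directly from \cite[Proposition~2.8]{minerbe}, and the subsequent remark only records how the constant $\varsigma_0$, explicitly computed in that reference, depends on the parameters. Your proposal is therefore not being compared against any argument in the present paper.

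As a sketch toward Minerbe's result, your outline captures the correct mechanism: the reverse doubling with exponent $b>1$ is exactly what makes the right-hand side of the Poincar\'e inequality decay like $c_n^{1-b}$, while doubling alone forces both components $U_n$, $V_n$ to carry a definite fraction of the volume of $B_{t_n}(o)$. You also correctly isolate the genuine difficulty, namely that $x_n$ and $y_n$ might be connected through $M\setminus B_{t_n}(o)$, which would collapse your test function. However, you do not resolve this obstacle --- you explicitly defer it to ``the heart of Minerbe's original argument''. So your proposal, like the paper itself, ultimately relies on the cited reference for the substantive part of the proof; what you have written is a correct heuristic reduction to that key step, not an independent proof.
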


\begin{remark}
	Because of $\dous, \neuuno$ and Remark \ref{rem_Pp} the constant $\varsigma_0$, explicitly computed in \cite{minerbe}, remains bounded as $p \ra 1$ while it diverges as $p \ra b$, see \cite[p. 1715]{minerbe}.
\end{remark}

A chaining argument shows the following:

\begin{proposition}\label{teo_sobolev_inf_2}
    Fix $p \in (1, \infty)$. Let $M$ be a complete Riemannian manifold, $\Omega \subset M$ be a smooth open subset and let $\gr$ be the Green kernel of $\Delta_p$ on $\Omega$ with pole at $o \in \Omega$. Let $R>0$ satisfy 
    \[
    B_{6R} \Subset \Omega,  
    \] 
    where $B_R$ denotes a ball centered at $o$. Assume that $B_{6R}$ satisfies: 
    \begin{itemize}
    	\item the doubling property $\dous$ with constant $C_\Dou$;
    	\item the weak $(1,p)$-Poincar\'e inequality $\neuunop$ with constant $\Po_{1,p}$;
    	\item the Sobolev inequality% \textcolor{blue}{[with $\nu=n$!]}
    	\begin{equation}\label{eq_Sobolev_supinf}
    	\left( \int |\psi|^{ \frac{\nu p}{\nu-p}}\right)^{\frac{\nu-p}{\nu}} \le \So_{p,\nu} \int |\nabla \psi|^p \qquad \forall \, \psi \in \lip_c(B_{6R}),
        \end{equation}
    	for some $\nu>p$ and $\So_{p,\nu}>0$. 
    \end{itemize}
    Then, having fixed $p_0 \in (p,\nu)$, the following Harnack inequality holds:
    $$\sup_{\partial B_t} \gr \le\Ha_{p,\nu}^{\frac{1}{p-1}} \cdot \inf_{\partial B_t} \gr \qquad \forall \, t \in (0,R),$$
    with constant
    \begin{equation}\label{ine_harnack_4}
    	\Ha_{p,\nu} = \exp\left\{C \varsigma_o(t)^{ \nu \left( 1 + \frac{2\nu}{\nu-p_0}\right)} \Po_{1,p} \max\{1, \So_{p,\nu}\}^{\frac{2\nu^2}{\nu-p_0}} 
        \right\},
    \end{equation}
    where $C>0$ is a constant depending on $C_\Dou$, $n, \nu$, $p_0$, $|B_1(o)|$ and on a lower bound on $\Ric$ on $B_{24}(o)$.
\end{proposition}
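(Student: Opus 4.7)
The plan is to join the points realizing $\sup_{\partial B_t}\gr$ and $\inf_{\partial B_t}\gr$ by a path kept away from the pole $o$, and then to apply the local Harnack inequality of Theorem \ref{teo_harnack} along a chain of balls covering such a path. First, pick $x_1,x_2\in\partial B_t(o)$ with $\gr(x_1)=\sup_{\partial B_t}\gr$ and $\gr(x_2)=\inf_{\partial B_t}\gr$. By the definition of $\varsigma_o(t)$, there is a continuous path $\gamma\subset\overline{B_t(o)}\setminus B_{t/(2\varsigma_o(t))}(o)$ joining $x_1$ to $x_2$. Setting $\rho:=t/(20\varsigma_o(t))$, every $B_{6\rho}(y)$ with $y\in\gamma$ sits in $B_{6R}\setminus\{o\}$, so $\gr$ is a positive $p$-harmonic function on each such ball. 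A standard covering/packing argument based on $\dous$ then produces a chain of balls $B_\rho(y_1),\dots,B_\rho(y_N)$ with $y_1=x_1$, $y_N=x_2$, $B_\rho(y_i)\cap B_\rho(y_{i+1})\neq\emptyset$, and
\[
N\le C\,\varsigma_o(t)^{\nu},\qquad C=C(C_\Dou),
\]
the exponent $\nu$ arising from $\rho\sim t/\varsigma_o(t)$ combined with doubling.

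Next, apply Theorem \ref{teo_harnack} on each $B_{6\rho}(y_i)$. The Poincar\'e inequality $\neuunop$ and the Sobolev inequality \eqref{eq_Sobolev_supinf} transfer to any sub-ball with the same constants (extending test functions by zero), and $\dous$ controls $|B_{6\rho}(y_i)|/|B_{2\rho}(y_i)|\le C_\Dou^2$. The delicate piece is the factor $Q_i^{-2}$ in the Harnack exponent \eqref{ine_harnack_3}: writing
\[
Q_i=\inf_{\tau\in[1,\nu/(\nu-p)]}(\So_{p,\nu}\,C_{p,\nu})^{-\nu\tau/p}\,\rho^{\nu\tau}\,|B_{2\rho}(y_i)|^{-\tau},
\]
one would bound $|B_{2\rho}(y_i)|\rho^{-\nu}$ by iterated doubling along a chain of balls from $y_i$ back to $o$ (with the Ricci lower bound on $B_{24}(o)$ anchoring the argument near $o$ via the local Poincar\'e/Sobolev inequalities of Example \ref{ex_mrs_local}). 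Substituting the extremal exponent $\tau=\nu/(\nu-p)$ and squaring produces
\[
\log\Ha_i\le C'\,\Po_{1,p}\,\varsigma_o(t)^{2\nu^2/(\nu-p_0)}\,\max\{1,\So_{p,\nu}\}^{2\nu^2/(\nu-p_0)}
\]
for the local Harnack constant on $B_{6\rho}(y_i)$, with $C'$ depending only on the stated parameters.

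Finally, chain the Harnack estimates: for any $z_i\in B_\rho(y_i)\cap B_\rho(y_{i+1})$,
\[
\sup_{B_\rho(y_i)}\gr\le\Ha_i^{1/(p-1)}\inf_{B_\rho(y_i)}\gr\le\Ha_i^{1/(p-1)}\gr(z_i)\le\Ha_i^{1/(p-1)}\sup_{B_\rho(y_{i+1})}\gr,
\]
and iterating (with a final application on $B_\rho(y_N)$ to pass from its supremum to $\gr(y_N)=\gr(x_2)$) yields $\gr(x_1)\le(\max_i\Ha_i)^{N/(p-1)}\gr(x_2)$. Combining $N\le C\varsigma_o(t)^\nu$ with the bound on $\log\Ha_i$ produces the final exponent $\nu+2\nu^2/(\nu-p_0)=\nu(1+2\nu/(\nu-p_0))$ on $\varsigma_o(t)$, exactly as claimed in \eqref{ine_harnack_4}. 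The hard part will be the middle step: carefully tracing the scale mismatch between the ambient radius $R$ and the local radius $\rho\sim t/\varsigma_o(t)$ through the Sobolev constant, the volume ratio $\rho^\nu/|B_{2\rho}(y_i)|$, and the extremal exponent $\tau$ in the infimum defining $Q_i$, so as to reproduce the precise powers of $\varsigma_o(t)$ and $\max\{1,\So_{p,\nu}\}$ appearing in the Harnack constant.
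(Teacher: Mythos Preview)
Your overall strategy is the paper's: connect the extremal points by a path in an annulus avoiding $o$, cover the path by a chain of roughly $\varsigma_o(t)^\nu$ balls of radius comparable to $t/\varsigma_o(t)$, apply Theorem~\ref{teo_harnack} on each, and multiply. The counting via doubling and the final chaining are exactly as in the paper, and your identification of how the exponent $\nu(1+2\nu/(\nu-p_0))$ arises is correct.

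The one place where your sketch diverges from what actually works is the bound on $Q_i$. You propose to control $|B_{2\rho}(y_i)|\rho^{-\nu}$ ``by iterated doubling along a chain of balls from $y_i$ back to $o$''. Doubling (via \eqref{rel_low_vol}) gives an \emph{upper} bound on $|B_{2\rho}(y_i)|$ only in terms of a ball of \emph{larger} radius containing it, not smaller; concretely, for $t\ge 1/2$ the paper uses $B_1(o)\subset B_{2t}(o)$ and \eqref{rel_low_vol} to get $t^\nu/|B_{2t}(o)|\ge C/|B_1(o)|$, which handles $Q_l$. For $t<1/2$ this fails: if $\nu>n$ then $\hat t^\nu/|B_{2\hat t}(x_l)|\to 0$ as $\hat t\to 0$, and no amount of doubling will save it. The paper's fix is not to anchor the volume ratio via doubling, but to \emph{switch Sobolev inequalities} on the small balls: since $B_{8\hat t}(x_l)\subset B_1(o)$, one applies Theorem~\ref{teo_harnack} with the Sobolev inequality of Example~\ref{ex_mrs_local} (exponent $n$ instead of $\nu$, constant controlled by the Ricci lower bound on $B_{24}(o)$), and then Bishop--Gromov gives $|B_{2\hat t}(x_l)|/\hat t^{\,n}\le C_\kappa'$ directly. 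This is the precise reason for the dependence of $C$ on $|B_1(o)|$ and on the Ricci lower bound in the statement. Once you make this case split explicit, your argument becomes the paper's.
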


\begin{proof}
   For $t \in (0,R]$, define $\hat{t} = \frac{t}{24\varsigma_o(t)}$. Let us take a maximal collection of disjoint balls $\{B_{\hat t}(x_i)\}_{i=1}^N$ of radius $\hat t$ with centers in the annulus
    \[
        A_t(o) = \overline{B_t(o)} \backslash B_{12\hat t}(o). 
    \]
    Assume that $B_{\hat t}(x_1)$ is the ball with the smallest volume among those in the collection. Since the balls $B_{\hat t}(x_i)$ are disjoint in $\tilde A_t(o)=:\overline {B_{ t+\hat t}(o)}\backslash {B_{11\hat t}(o)}$ and their double ${B_{2\hat t} (x_i)}$ cover $A_t(o)$, by  \eqref{rel_low_vol},
    $$
    |\tilde A_t(o)| \ge \sum_{i} |B_{\hat t}(x_i)| \ge N |B_{\hat t}(x_1)| \ge N C_1 \left( \frac{\hat t}{2t}\right)^\nu |B_{2t}(x_1)| \ge N \frac{C_2}
    {\varsigma_o(t)^\nu} |\tilde A_t(o)|,
    $$
    where $C_1,C_2$ depend on $C_\Dou$ and $\nu = \log_2 C_\Dou$. Thus
    $$
    N \le N_t \doteq \frac{\varsigma_o(t)^\nu}{C_2}.
    $$
    Fix $x,y \in \partial B_t(o)$ realizing, respectively, the maximum and minimum of $\gr$ on $\partial B_t(o)$, and let $\gamma \subset A_t(o)$ be a path from $x$ to $y$. The existence of $\gamma$ is guaranteed by the very definition on $\varsigma_o(t)$. Since the family $\{B_{2\hat t}(x_i)\}$ covers $A_t(o)$, up to renaming we can cover $\gamma$ by a chain of at most $N_t$ balls $B_l = B_{2 \hat t}(x_l)$ with $B_l \cap B_{l+1} \neq \emptyset$. From $6B_l \subset B_{6R}(o)\backslash \{o\}$, $\gr$ is nonsingular on $6B_l$ and we can apply Theorem \ref{teo_harnack} to deduce 
    $$
    \sup_{B_l} \gr \le \haus_{p,\nu,l}^{\frac{1}{p-1}} \inf_{B_l} \gr \qquad \text{for each $B_l$},
    $$
    where 
    \begin{equation}\label{ine_harnack_l}
    	\Ha_{p,\nu,l} = \exp\left\{ c_2 \Po_{1,p} \left[\frac{|B_{6\hat t}(x_l)|}{|B_{2\hat t}(x_l)|}\right]^{\frac{1}{p}} Q_l^{-2} p \right\} \le \exp\left\{ C_2 \Po_{1,p} Q_l^{-2} \right\}
    \end{equation}
    where $c_2$ is a constant depending only on $\nu$ and $p_0$, while $C_2$ also depends on $C_\Dou$, 
    $$
    Q_l = \inf_{\tau \in [1, \frac{\nu}{\nu-p}]} \big( \So_{p,\nu} C_{p,\nu} \big)^{- \frac{\nu \tau}{p}} \hat t^{\nu \tau} |B_{2\hat t}(x_l)|^{-\tau}
    $$
    and $C_{p,\nu}$ is as in \eqref{Cpnu}. First suppose that $t \ge 1/2$. By the definition of $\hat t$ and by \eqref{rel_low_vol}, we have
    \begin{equation}\label{eq_prim_bou}
    \begin{array}{lcl}
    Q_l & \ge & \disp \inf_{\tau \in [1, \frac{\nu}{\nu-p}]} \big( \So_{p,\nu} C_{p,\nu} \big)^{- \frac{\nu \tau}{p}} t^{\nu \tau} (24\varsigma_o(t))^{-\nu \tau} |B_{2t}(o)|^{-\tau} \\[0.5cm]
    & \ge & \disp \inf_{\tau \in [1, \frac{\nu}{\nu-p}]} \big( \So_{p,\nu} C_{p,\nu} \big)^{- \frac{\nu \tau}{p}} (48\varsigma_o(t))^{-\nu \tau} C_3^\tau |B_{1}(o)|^{-\tau} \\[0.5cm]    
    & \ge & \disp C_4\left(\inf_{\tau \in [1, \frac{\nu}{\nu-p}]} \big( \So_{p,\nu} C_{p,\nu} \big)^{- \frac{\nu \tau}{p}}\right) \varsigma_o(t)^{-\frac{\nu^2}{\nu-p}} |B_{1}(o)|^{- \frac{\nu}{\nu-p}},
    \end{array}
    \end{equation}
    where $C_3$ depends on $C_\Dou$ and $C_4$ on $C_\Dou, \nu, p_0$. If $t\leq 1/2$, the Sobolev inequality in \eqref{eq_Sobolev_supinf} does not yield effective estimates for small $t$ because $\nu$ may be different from (actually, bigger than) $n$. For this reason, choose $\kappa$ so that $\Ric \ge -(n-1)\kappa^2$ on $B_{24}(o)$. Example \ref{ex_mrs_local} guarantees that the Sobolev inequality \eqref{eq_Sobolev_supinf} is satisfied on $B_{4}(o)$ with $\nu = n$ and constant $\So_{p,n}(1)$. In turn, since each ball $B_{8\hat t}(x_l) \subset B_{1}(o)$ we can apply the same procedure as above by using the Harnack inequality in Theorem \ref{teo_harnack} on each $B_{2\hat t}(x_l)$ coming from the choice of \eqref{SobolevL1_local} as the Sobolev inequality. The Harnack constant satisfies  \eqref{ine_harnack_l} with $\nu = n$, $c_2$ depending on $n,p_0$ and 
    $$
    Q = Q_l = \disp \inf_{\tau \in [1, \frac{n}{n-p}]} \big( \So_{p,n}(1) C_{p,n} \big)^{- \frac{n\tau}{p}} \hat t^{n\tau} |B_{2\hat t}(x_l)|^{-\tau}. 
    $$
    By Bishop-Gromov inequality, 
    \[
    \frac{|B_{2\hat t}(x_l)|}{t^n} \le C_\kappa \lim_{s \to 0} \frac{|B_{s}(x_l)|}{s^n} = C'_\kappa
    \]
    for constants $C_\kappa,C_\kappa' > 1$ depending on $\kappa, n$. Therefore, 
    \[
    Q_l \ge \disp \inf_{\tau \in [1, \frac{n}{n-p}]} \big( \So_{p,n}(1) C_{p,n} \big)^{- \frac{n\tau}{p}} (C_\kappa')^{-\frac{n}{n-p}}. 
    \]
    Using that $\So_{p,n}(1)$ only depends on $\kappa, |B_1(o)|$, the Harnack constant can be estimated by using the doubling condition as follows:
    \[
    \haus_{p,n,l} \le \exp \left\{ C_5 \Po_{1,p}\right\}
    \]
    where $C_5$ depends on $\nu, p_0,\kappa, n, |B_1(o)|$. Together with the bound on $\haus_{p,\nu,l}$ we obtained in \eqref{eq_prim_bou}, we get for each $t \in (0,R)$
    \[
    \sup_{B_l} \gr \le \hat\haus_{p,\nu,l}^{\frac{1}
    {p-1}} \inf_{B_l} \gr 
    \]
    with
    \[
        \begin{array}{lcl}
        \disp \hat \Ha_{p,\nu,l} & = & \disp \exp\Big\{C_6 \Po_{1,p} \sup_{\tau \in [1, \frac{\nu}{\nu-p}]} \left[ \max\{1,\So_{p,\nu}\}^{2\nu \tau/p} \varsigma_o(t)^{\frac{2 \nu^2}{\nu-p}} \right] \Big\} \\[0.5cm]
        & \le & \exp\left\{C_6 \Po_{1,p} \max\{1, \So_{p,\nu}\}^{\frac{2\nu^2}{\nu-p_0}} \varsigma_o(t)^{\frac{2 \nu^2}{\nu-p_0}} \right\}
        \end{array}
    \]
    and $C_6>0$ depending on $C_\Dou$, $n, \nu$, $p_0$, $|B_1(o)|$ and on a lower bound on $\Ric$ on $B_{24}(o)$. The result follows by chaining the Harnack inequalities on each $B_l$.
\end{proof}

If we assume that the entire $M$ satisfies $\dou, \neuuno$, then one may use the Harnack inequality in Theorem \ref{teo_uniHarnack} instead of Theorem \ref{teo_harnack} on each ball $B_l$ of the covering, obtaining the following:

\begin{corollary}\label{cor_sobolev_inf}
    Let $M$ be a complete manifold satisfying $\dou, \neuuno$, and assume that $\Delta_p$ is non-parabolic for each $p \in (1,p_0] \subset (1,\nu)$. Denoting by $\gr_p$ the Green kernel of $\Delta_p$ centered at $o \in M$, the following Harnack inequality holds:
    $$
    \sup_{\partial B_t} \gr \le\Ha_{p,\nu}^{\frac{1}{p-1}} \cdot \inf_{\partial B_t} \gr \qquad \forall \, t \in (0,\infty),
    $$
    with constant
    \begin{equation}\label{cor_ine_harnack_4}
    	\Ha_{p,\nu} = \exp\left(C \varsigma_o(t)^\nu\right),
    \end{equation}
    where $C>0$ is a constant depending on $C_\Dou, \Po_{1,1}$, $n, \nu$, $p_0$.
\end{corollary}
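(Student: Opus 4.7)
The plan is to repeat verbatim the covering and chaining construction from the proof of Proposition \ref{teo_sobolev_inf_2}, with the sole modification that on each of the small balls in the chain we apply the uniform Harnack inequality of Theorem \ref{teo_uniHarnack} (whose constant $\mathscr{H}$ depends only on $C_\Dou,\Po_{1,1},p_0$) in place of the local one from Theorem \ref{teo_harnack} (whose constant depended on $\So_{p,\nu}$ and on the shape of the ball). This dispenses with both the $\max\{1,\So_{p,\nu}\}^{2\nu^2/(\nu-p_0)}$ factor and with the $\varsigma_o(t)^{2\nu^2/(\nu-p_0)}$ factor that appeared in \eqref{ine_harnack_4}, leaving only the combinatorial factor $\varsigma_o(t)^\nu$ coming from the number of balls in the chain.

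More precisely, fix $t \in (0,\infty)$ and set $\hat t = t/(24\,\varsigma_o(t))$. Choosing a maximal disjoint family of balls $\{B_{\hat t}(x_i)\}$ of radius $\hat t$ with centers in the annulus $A_t(o) = \overline{B_t(o)}\backslash B_{12 \hat t}(o)$, the volume-comparison argument in Proposition \ref{teo_sobolev_inf_2} (which only used $\dou$ and \eqref{rel_low_vol}) gives an upper bound $N \le N_t = \varsigma_o(t)^\nu/C_2$ on the cardinality of the family, with $C_2 = C_2(C_\Dou)$. Taking $x,y \in \partial B_t(o)$ at which $\gr$ attains its maximum and its minimum respectively and connecting them by a path $\gamma \subset \overline{B_t(o)}\backslash B_{t/\varsigma_o(t)}(o) \subset A_t(o)$ (which exists by the very definition of $\varsigma_o$), we can cover $\gamma$ by a chain $B_l = B_{2\hat t}(x_l)$, $l=1,\ldots, N' \le N_t$, of doubled balls with $B_l \cap B_{l+1}\neq \emptyset$.

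For each ball in the chain, $B_{12\hat t}(x_l) \Subset M$ (since $M$ is complete) and $o \notin B_{12 \hat t}(x_l)$ (since $d(x_l,o) \ge 12\hat t$ as $x_l \in A_t(o)$), so $\gr$ is smooth, positive and $p$-harmonic on $B_{12 \hat t}(x_l)$. Theorem \ref{teo_uniHarnack} applied with $R = 2 \hat t$ therefore yields
\[
\sup_{B_l} \gr \le \mathscr{H}^{\frac{1}{p-1}}\inf_{B_l} \gr, \qquad l = 1,\ldots, N',
\]
with the same constant $\mathscr{H} = \mathscr{H}(C_\Dou,\Po_{1,1},p_0)$ in all $l$. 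Chaining these $N' \le N_t$ inequalities through the overlaps $B_l \cap B_{l+1}$ gives
\[
\gr(x) \le \sup_{B_1} \gr \le \mathscr{H}^{\frac{N_t}{p-1}} \inf_{B_{N'}} \gr \le \mathscr{H}^{\frac{N_t}{p-1}} \gr(y),
\]
and substituting $N_t \le \varsigma_o(t)^\nu/C_2$ yields $\Ha_{p,\nu} = \exp\bigl(C\,\varsigma_o(t)^\nu\bigr)$ with $C = C_2^{-1}\log\mathscr{H}$, as claimed.

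The only potential obstacle is ensuring that the uniform Harnack really applies at every link of the chain, i.e.\ that $6B_l$ avoids the pole and lies in $M$; both are guaranteed by the choice of the inner radius $12\hat t$ of the annulus $A_t(o)$ and by the completeness of $M$. Everything else is a direct simplification of the proof of Proposition \ref{teo_sobolev_inf_2}.
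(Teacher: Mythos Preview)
Your proof is correct and follows exactly the approach indicated in the paper: replace the local Harnack inequality of Theorem~\ref{teo_harnack} by the uniform one of Theorem~\ref{teo_uniHarnack} on each ball $B_l$ of the chain constructed in Proposition~\ref{teo_sobolev_inf_2}, so that only the combinatorial factor $N_t \le \varsigma_o(t)^\nu/C_2$ survives in the final constant. The paper's own proof is just the one-sentence remark preceding the corollary; your write-up simply makes it explicit.
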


We can now prove Theorem \ref{theorem-decayGreen2}.

\begin{proof}[Proof of Theorem \ref{theorem-decayGreen2}]
    If $\dous, \neuuno$ and the reverse doubling inequality \eqref{eq_reverse_doub} hold, then applying Theorem \ref{theorem-tewo2} and Remark \ref{rem_Pp} the Sobolev inequality 
    \[
		\left(
		\int_M \left[ \frac{r^\nu}{|B_r|}
		\right]^{-\frac{p}{\nu-p} } |\psi|^{\frac{\nu p}{\nu-p}}
		\right)^{\frac{\nu-p}{\nu}}
		\leq
		\So_{p_0,\nu} \int_M |\nabla \psi|^p, \qquad \forall \,\psi\in \lip_c (M)
    \]
    is satisfied with a constant $\So_{p_0,\nu}$ depending on $C_\Dou, \Po_{1,1},\nu,b,C_\RD$ and $p_0$. Hence, $M$ satisfies \eqref{sobolev_weighted} with the choice $\eta(r)=\sup_{s\leq r}\big(\frac{s^\nu}{|B_s|}\big)$. Also, Proposition \ref{teo_minerbe} guarantees that $\varsigma_o(t) \le \varsigma_0$ for some constant $\varsigma_0$ depending on $C_\Dou, \Po_{1,1}, C_\RD$ and $b$. We can therefore apply Theorem \ref{teo_sobolev_inf} and Corollary \ref{cor_sobolev_inf} to obtain the desired conclusion. 
\end{proof}

We also have the following local result:
\begin{proposition}\label{prop_decaygreen_local}
    Let $M$ be a complete Riemannian manifold and $\Omega \subset M$ be a smooth open subset. Fix $p_0 \in (1,n)$ and assume that $\Delta_p$ is non-parabolic in $\Omega$ for some $p \in (1, p_0]$. Denote by $\gr$ the $p$-Green kernel of $\Omega$ with pole at $o$. Then, the following estimate holds for balls
    \[
    B_{6R} \Subset \Omega  
    \] 
    centered at $o$:
    \begin{equation}\label{eq:nondeg}
        \gr_p(x) \le C^{\frac{1}{p-1}} \left[ \|\gr_p\|_{L^\infty(\partial B_R)} +  r(x)^{-\frac{n-p}{p-1}}\right] \qquad \forall \, x \in B_R,
    \end{equation}
    where $C>0$ is a constant depending on $n,p_0$, $\|\varsigma_o\|_{L^\infty([0,R])}$, $R$, $|B_1|$, and on a lower bound on $\Ric$ on $B_{36R}$.
\end{proposition}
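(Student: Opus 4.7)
The plan is to adapt the two-step argument of Theorem \ref{theorem-decayGreen2} to the purely local setting: first a capacity-based bound on $\inf_{\partial B_t}\gr$, then a Harnack transfer to $\sup_{\partial B_t}\gr$. The Ricci bound $\Ric \ge -(n-1)\kappa^2$ on $B_{36R}$ activates Example \ref{ex_mrs_local} applied with outer radius $36R$ and hence supplies, on $B_{6R}$, a doubling constant $C_\Dou$, a weak $(1,1)$-Poincar\'e constant $\Po_{1,1}$ (hence also $\neup$ by H\"older), and the $L^p$-Sobolev inequality \eqref{SobolevL1_local} with constant $\So_{p,n}(6R)$, all depending only on $n, p_0, R, |B_1|$ and $\kappa$. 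These are precisely the data required by Proposition \ref{teo_sobolev_inf_2}.

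Set $C_0 := \|\gr\|_{L^\infty(\partial B_R)}$. The key new observation is a maximum-principle confinement: since on $\Omega \setminus \overline{B_R}$ the kernel $\gr$ is positive $p$-harmonic and arises as the monotone limit of exhausting kernels $\gr_j$ vanishing on $\partial\Omega_j$, the maximum principle yields $\gr \le C_0$ on $\Omega\setminus B_R$, so that $\{\gr > C_0\} \subset B_R$. This localization is what lets us import the capacity argument of Theorem \ref{teo_sobolev_inf}: for any $\ell > C_0$, the truncation
\[
\psi_\ell := \min\!\left(\frac{(\gr - C_0)_+}{\ell - C_0},\, 1\right)
\]
lies in $\lip_c(B_{6R})$, equals $1$ on $\{\gr \ge \ell\}$, and has $|\nabla \psi_\ell|^p = (\ell-C_0)^{-p}|\nabla\gr|^p\chi_{\{C_0 < \gr < \ell\}}$. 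Using \eqref{eq_levelGreen} from above and the $L^p$-Sobolev on $B_{6R}$ from below one obtains
\[
\So_{p,n}(6R)^{-1}\,|\{\gr \ge \ell\}|^{\frac{n-p}{n}} \le \capac_p\bigl(\{\gr \ge \ell\},B_{6R}\bigr) \le (\ell - C_0)^{1-p}.
\]
Writing $f(t) := \min_{\partial B_t}\gr$ and noting $B_t \subset \{\gr \ge f(t)\}$ by the strong minimum principle, evaluating at $\ell = f(t)$ (the case $f(t) \le C_0$ being trivial) and using Bishop-Gromov to bound $|B_t| \ge c(n,\kappa)t^n$ for $t \le R$, one obtains
\[
\inf_{\partial B_t}\gr \le C_0 + C_1^{\frac{1}{p-1}}\, t^{-\frac{n-p}{p-1}}, \qquad \forall\, t \in (0,R),
\]
with $C_1$ depending only on the declared data.

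For the second step, Proposition \ref{teo_sobolev_inf_2} applied with $\nu = n$ yields $\sup_{\partial B_t}\gr \le \Ha_{p,n}^{1/(p-1)}\inf_{\partial B_t}\gr$ for all $t\in(0,R)$, with $\Ha_{p,n}$ bounded in terms of $\|\varsigma_o\|_{L^\infty([0,R])}$ and the Sobolev/Poincar\'e constants above. Composing the two steps at $t = r(x)$ and absorbing $\Ha_{p,n}$ and $C_1$ into a single constant $C$ yields \eqref{eq:nondeg}. The main obstacle and novelty lie in Step 1: unlike in Theorem \ref{teo_sobolev_inf}, one cannot use a Sobolev inequality on all of $\Omega$, only on the fixed ball $B_{6R}$, so the test function fed to the $p$-capacity must already vanish near $\partial B_{6R}$. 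The confinement $\{\gr>C_0\} \subset B_R$ is exactly what lets us truncate $\gr$ from below by the additive constant $C_0$, and this is the structural reason why the $\|\gr\|_{L^\infty(\partial B_R)}$-term appears on the right-hand side of \eqref{eq:nondeg}.
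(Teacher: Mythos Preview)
Your approach is correct and essentially identical to the paper's: the paper observes that $\gr - c_R$ (with $c_R = \|\gr\|_{L^\infty(\partial B_R)}$) is itself the $p$-Green kernel of the domain $\{\gr > c_R\} \subset B_R$ and invokes Theorem \ref{teo_sobolev_inf} directly on that domain with the unweighted Sobolev inequality inherited from $B_{6R}$, whereas you unpack the same capacity argument by hand via the truncation $\psi_\ell$. One minor slip: Bishop--Gromov alone does not yield $|B_t| \ge c(n,\kappa)t^n$ --- it gives an upper bound; the lower volume bound needs a reference volume such as $|B_1|$ (via the monotonicity of $|B_t|/V_\kappa(t)$) or can be extracted from the Sobolev inequality itself as in the proof of Theorem \ref{teo_sobolev_inf}, and either way the final constant is permitted to depend on $|B_1|$ so this is harmless.
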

\begin{proof}
    Set $c_R = \|\gr_p\|_{L^\infty(\partial B_R)}$ and observe that $\gr_p - c_R$ is the $p$-Green kernel of $\{\gr_p > c_R\}$, a set contained in $B_R$ by the maximum principle. We thus apply Theorem \ref{teo_sobolev_inf} to $\gr_p - c_R$  and use Example \eqref{ex_mrs_local} to get
    \[\min_{\partial B_t} \gr_p \le c_R+C\big(n,p_0,R,K,|B_1|\big)^{\frac1{p-1}}t^{-\frac{\nu-p}{p-1}},\]
    where $K$ is the Ricci lower bound in $B_{36R}$ (here, we used  \eqref{SobolevL1_local} in Example \ref{ex_mrs_local} and volume comparison to bound the Sobolev constant in $B_{6R}$). We then apply Proposition \ref{teo_sobolev_inf_2} to $\gr_p$ and obtain the desired bound.
\end{proof}

\subsection{Summary of the results in \texorpdfstring{\cite{ajm}}{[10]}}\label{subsec:ajm}

As already mentioned, the gap in the proof of \cite[Theorem 3.6]{ajm} affects some of the results there. We now summarize their current status in light of the main theorems of the present paper:
\begin{enumerate}[label={(\roman*)}, topsep=1pt, itemsep=-0.3ex]
    \item Theorems 1.3 and 4.4 in \cite{ajm}. The claimed IMCF $w$ does indeed exist (equivalently, its re\-pa\-ra\-me\-tri\-zation $\varrho_1$ described in \cite{ajm} is positive on $M\backslash\{o\}$), and satisfies the claimed gradient estimate, but its properness is currently pending (i.e. the lower bound on $\varrho_1$ needs to be proved). The existence of $w$ can be shown as follows: combining the decay estimate for the infimum of $\gr_p$ established in our Theorem~\ref{teo_sobolev_inf}, and the gradient estimate in \cite[Theorem 2.19]{ajm}, one deduces that the reparametrized $p$-Green kernels $\varrho_p$ converge up to a subsequence to some $\varrho_1$ satisfying $\sup_{\partial B_t} \rho_1 > 0$ for each $t$. The positivity of $\varrho_1$ then follows from the strong maximum principle \cite[Theorem 4.2]{ajm}.\\ Theorems 1.3 and 4.4 in \cite{ajm} are superseded by our Theorem~\ref{thm-intro:main_euc_isop}, see also Question~\ref{qs-intro:p_convergence}. For its proof, see Section \ref{sec:growth} below.
    
    \item Theorem 1.4 holds as stated in \cite{ajm}. It is the present Theorem~\ref{thm-intro:main_ric_rd}.
    
    \item Theorem 3.23 in \cite{ajm} is fully recovered by our Theorem~\ref{theorem-decayGreen2}.
    
    \item Proposition 4.3 in \cite{ajm}. The result holds with minor modifications, which do not affect applications. First, in the statement one should explicitly assume that the function $h$ defining the model $M_h$ is increasing. This fact, implicitly used in the proof, is automatic in all applications in \cite{ajm}. 
    Next, instead of \cite[Theorem 3.6]{ajm} one may rely on Proposition \ref{prop_decaygreen_local}: the desired conclusion restricts to balls $B_{6R} \Subset \Omega$ and becomes
    \begin{equation}\label{eq_nondegen}
    \varrho_1(x) \ge \min \Big\{ v_h^{-1} \left(Cr(x)^{n-1}\right),\ v_h^{-1}\big(C \min_{\partial B_R}v_h(\varrho_1)\big) \Big\},\qquad\forall\,x\in B_R,
    \end{equation}
    where $C$ is a constant depending on $n,p_0$, $\|\varsigma_o\|_{L^\infty([0,R])}$, $R$, $|B_1(o)|$ and on a lower bound on $\Ric$ on $B_{36R}$. This inequality suffices for applications to \cite[Theorem 4.6]{ajm} (whose assumptions imply $\varsigma_o(t) \le \varsigma_0$).\\
    To see \eqref{eq_nondegen}, we recall that $\gr_p=\int_{\varrho_p}^\infty v_h^{-\frac1{p-1}}$ and $\varrho_p\to\varrho_1$. Then taking the $(p-1)$-th power of \eqref{eq:nondeg}, letting $p\to1$ and using \cite[Lemma 4.1]{ajm} we obtain
    \[\begin{aligned}
        \frac1{v_h(\varrho_1(x))} &= \lim_{p\to1}\gr_p(x)^{p-1}\leq C\max\Big\{\lim_{p\to1}\|\gr_p\|_{L^\infty(\p B_R)}^{p-1},\ r(x)^{1-n}\Big\} \\
        &= C\max\Big\{\max_{\p B_R}\frac1{v_h(\varrho_1)},r(x)^{1-n}\Big\}.
    \end{aligned}
    \]
    which yields \eqref{eq_nondegen} up to renaming $C$. If in addition $h$ is convex, as for instance in application to \cite[Theorem 4.6]{ajm}, then $v_h^{-1}$ is concave and thus replacing $C$ with $\min\{C,1\}$ we get 
    \[
    \varrho_1(x) \ge C \min \Big\{ \min_{\p B_R} \varrho_1, v_h^{-1}\left( r(x)^{n-1}\right) \Big\}.
    \]
    
    \item Theorem 4.6 in \cite{ajm}. This a generalization of Theorem 1.4 and depends on the estimates in \cite[Theorem 3.23 and Proposition 4.3]{ajm}. Hence, it holds as stated in \cite{ajm}.

    \item Proposition 5.2 holds as stated in \cite{ajm}. It is enough to replace the use of \cite[Theorem 3.6]{ajm} with Proposition \ref{prop_decaygreen_local}, and to require that $B_{6R_0}(o)$ remains within the domain $\mathscr{D}_o$ of a normal chart at $o$.
\end{enumerate}

\section{Growth estimates for IMCF cores}\label{sec:growth}

The goal of this section is to prove Theorems \ref{thm-intro:main_ric_rd} and \ref{thm-intro:main_euc_isop}. Let us recall that:
\begin{definition}\label{def-growth:imcf_core}
    We say that $u:M\backslash\{o\}\to\RR$ is an IMCF core with pole $o$, if:
    \begin{enumerate}[label={(\arabic*)}, nosep]
        \item $u\in\Lip_{\loc}(M\backslash\{o\})$, and $u(x)-(n-1)\log r(x)\to0$ as $x\to o$,
        \item $u$ is a weak IMCF in $M\backslash\{o\}$.
    \end{enumerate}
    \noindent We say that $u$ is a proper IMCF core, if $E_t(u)\Subset M$ for all $t\in\RR$.
\end{definition}

\subsection{Proof of Theorem \ref{thm-intro:main_ric_rd}}

We restate Theorem \ref{thm-intro:main_ric_rd} below for the reader's convenience:

\begin{theorem}\label{thm-pcore:p_core}
    Suppose $M$ satisfies $\Ric\geq0$ and a reverse volume doubling condition
    \begin{equation}\label{eq-pcore:rvdoub}
        \frac{|B_t(o)|}{|B_s(o)|} \ge C_{\RD} \left( \frac{t}{s}\right)^b,\qquad\forall\,t\geq s>0. \tag{$\text{RD}_b$}
    \end{equation}
    Then there exists a unique proper IMCF core $u$ on $M$ with pole $o$, with the growth estimate
    \begin{align}
        & u(x)\geq(n-1)\log r(x)-C\big(n,b,C_{\RD},|B_1(o)|\big),\qquad \forall\,x\in B_1(o), \label{eq-pcore:growth_ric_rd_1}\\
        & u(x)\geq(b-1)\log r(x)-C\big(n,b,C_{\RD},|B_1(o)|\big),\qquad \forall\,x\in M\backslash B_1(o), \label{eq-pcore:growth_ric_rd_2}
    \end{align}
    and the global gradient estimate
    \begin{equation}\label{eq-pcore:grad_est}
        |\D u(x)|\leq(n-1)e^{-\frac{u}{n-1}},\qquad\forall\,x\in M\backslash\{o\}.
    \end{equation}
\end{theorem}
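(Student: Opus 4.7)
\medskip\noindent\textbf{Proof plan.} The plan is to realize $u$ as a locally uniform limit on $M\setminus\{o\}$ of the Moser-transformed $p$-Green kernels $w_p := (1-p)\log \gr_p$, following the scheme of \cite{moser} and \cite{ajm}. The decisive new ingredient is the uniform (in $p$) decay estimate for $\gr_p$ provided by Theorem~\ref{theorem-decayGreen2}, which replaces the flawed \cite[Theorem~3.6]{ajm}.

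I would first fix $p_0\in(1,b)$ and work with $p\in(1,p_0]$. Since $\Ric\geq 0$ gives $\dous$ and $\neuuno$, and \eqref{eq-pcore:rvdoub} then gives the Tewodrose Sobolev inequality via Theorem~\ref{theorem-tewo2}, $\Delta_p$ is non-parabolic and $\gr_p$ exists. Bishop--Gromov makes $t\mapsto |B_t(o)|/t^n$ non-increasing, so the supremum in \eqref{eq_decaygreen} (applied with $\nu=n$) is attained at $t=2r(x)$; taking $(1-p)$-logs yields, with $C=C(n,b,C_{\RD},|B_1(o)|)$ independent of $p$,
\[
w_p(x)\geq (n-p)\log r(x) + \log|B_{2r(x)}(o)| - n\log(2r(x)) - C.
\]
Bounding $|B_{2r(x)}(o)|$ from below by Bishop--Gromov for $r(x)\leq 1$ and by \eqref{eq-pcore:rvdoub} for $r(x)\geq 1$ will convert this into the two lower bounds
\[
w_p\geq (n-p)\log r - C \text{ on } B_1(o), \qquad w_p\geq (b-p)\log r - C \text{ on } M\setminus B_1(o),
\]
each uniform in $p\in(1,p_0]$.

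Next, I would invoke the sharp Kotschwar--Ni gradient estimate for $p$-harmonic functions on manifolds with $\Ric\geq 0$ \cite{kotschwarni,sungwang}, recast for the $p$-IMCF as in \cite{ajm}:
\[
|\nabla w_p(x)|\leq (n-p)\, e^{-w_p(x)/(n-p)}, \qquad x\in M\setminus\{o\}.
\]
Combined with the lower bounds above, this furnishes uniform local Lipschitz control for the renormalized family $u_p := w_p - c_p$, where $c_p$ is chosen via Theorem~\ref{teo_localsingular} so that $u_p(x)-(n-p)\log r(x)\to 0$ as $x\to o$ (a direct computation gives $c_p\to\log\omega_{n-1}$ as $p\to 1$). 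Arzel\`a--Ascoli then yields a subsequential limit $u_{p_i}\to u$ in $C^0_{\loc}(M\setminus\{o\})$; Moser's observation \cite{moser} shows $u$ is a weak IMCF on $M\setminus\{o\}$, and all three estimates \eqref{eq-pcore:growth_ric_rd_1}, \eqref{eq-pcore:growth_ric_rd_2} and \eqref{eq-pcore:grad_est} pass to the limit, so in particular $u$ is proper. A short equicontinuity argument based on Theorem~\ref{teo_localsingular} and the uniform gradient estimate near $o$ will then give $u(x)-(n-1)\log r(x)\to 0$ as $x\to o$, so $u$ is an IMCF core. Uniqueness of a proper IMCF core is immediate from the Huisken--Ilmanen maximum principle \cite[Theorem~2.2]{huiskenilmanen}.

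The hard part is the uniform decay of $\gr_p$ as $p\to 1^+$ --- precisely Theorem~\ref{theorem-decayGreen2}. Once that is in hand, every other ingredient is a now-standard adaptation of the scheme in \cite{ajm}.
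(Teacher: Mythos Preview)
Your overall strategy matches the paper's: Moser-transform the Green kernel, use Theorem~\ref{theorem-decayGreen2} for uniform lower bounds on $w_p$, invoke the sharp gradient estimate, extract a subsequential limit, and appeal to the Huisken--Ilmanen maximum principle for uniqueness. However, the step you dismiss as ``a short equicontinuity argument based on Theorem~\ref{teo_localsingular}'' --- showing that the limit $u$ satisfies $u(x)-(n-1)\log r(x)\to 0$ at the pole --- is where most of the remaining work lies, and your proposal contains no viable argument for it. Theorem~\ref{teo_localsingular} gives $u_p(x)-(n-p)\log r(x)\to 0$ as $x\to o$ for each \emph{fixed} $p$, but the rate is obtained by compactness and is not known to be uniform in $p$; the double limit cannot be swapped. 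The paper proceeds quite differently: it first constructs an explicit supersolution $\bar w_p=(n-p)\log r+\frac{1}{1-Cr}$ of the $p$-IMCF equation to get a uniform-in-$p$ upper bound on $w_p$ near $o$ (this is also what you are missing for Arzel\`a--Ascoli --- your proposal provides only the lower bound and equicontinuity, not pointwise boundedness from above). After extracting the limit $w$ with the two-sided control $|w-(n-1)\log r|\le C$, the paper uses a blow-up argument together with Huisken--Ilmanen's Liouville theorem \cite[Prop.~7.2]{huiskenilmanen} to show $w$ is asymptotically constant on small spheres, then an IMCF-level sub/supersolution comparison to show $w-(n-1)\log r$ has a limit $\alpha$ at $o$, and finally comparison with the Green kernel of a space form of positive curvature to pin down $\alpha=\log\omega_{n-1}$. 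None of this is accessible from Theorem~\ref{teo_localsingular} and the gradient estimate alone.

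A minor point: the gradient estimate you write for $w_p$ omits the normalization factor appearing in \cite[Lemma~2.17]{ajm} (cf.\ the paper's \eqref{eq-pcore:grad_est_p}); without it the final inequality \eqref{eq-pcore:grad_est} for $u=w-\log\omega_{n-1}$ would come out with the wrong constant (note that equality holds in $\R^n$, so there is no slack).
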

\begin{proof}
    Recall Definition \ref{def-prelim:vd_poincare}: it is well-known that $\Ric\geq0$ implies $\dous$ and $\neuuno$, with the doubling dimension $\nu=n$ and constants $C_{\Dou},\Po_{1,1}$ bounded in terms of $n$. Further, notice that
    \[\frac{t^\nu}{|B_t(o)|}\leq\frac1{|B_1(o)|},\qquad\forall\,t\leq1\]
    by Bishop-Gromov, and
    \[\frac{t^\nu}{|B_t(o)|}\leq\frac{t^{n-b}}{C_{\RD}|B_1(o)|},\qquad\forall\,t\geq1\]
    by our reverse doubling condition.
    Thus, Theorem \ref{theorem-decayGreen2} (applied with the choice $p_0=(b+1)/2$) yields
    \begin{equation}\label{eq-pcore:gr_lower}
        \gr_p(x)\leq\left\{\begin{aligned}
        & C^{\frac1{p-1}}|B_1(o)|^{\frac1{1-p}}r(x)^{-\frac{n-p}{p-1}},\qquad\forall\,x\in B_1(o)\backslash\{o\}, \\
        & C^{\frac1{p-1}}|B_1(o)|^{\frac1{1-p}}r(x)^{-\frac{b-p}{p-1}},\qquad\forall\,x\in M\backslash B_1(o),
    \end{aligned}\right.
    \end{equation}
    for all $p\in(1,(1+b)/2]$, where $C$ depends on $n,b,C_{\RD}$.

    Take Moser's transformation $w_p=(1-p)\log\gr_p$. The above bound is turned into
    \begin{align}
        w_p(x) &\geq (n-p)\log r(x)-C\qquad\forall\,x\in B_1(o)\backslash\{o\}, \label{eq-pcore:lower1}\\
        w_p(x) &\geq (b-p)\log r(x)-C\qquad\forall\,x\in M\backslash B_1(o), \label{eq-pcore:lower2}
    \end{align}
    where $C$ is a generic constant depending on $n,b,C_{\RD},|B_1(o)|$. Furthermore, the gradient estimate in \cite[Lemma 2.17]{ajm} gives
    \begin{equation}\label{eq-pcore:grad_est_p}
        |\D w_p|\leq(n-p)e^{-\frac{w_p}{n-p}}\Big(\frac{n-p}{p-1}\omega_{n-1}^{\frac1{p-1}}\Big)^{\frac{p-1}{n-p}}\qquad\text{in}\ \ M\backslash\{o\}.
    \end{equation}
    To pass to the limit, we also need an upper bound for $w_p$. Suppose that $C$ is large enough so that the injectivity radius of $o$ is larger than $1/C$, and the mean curvature $H_\rho$ of the geodesic sphere $\p B_\rho(o)$ satisfies
    \begin{equation}\label{eq-pcore:Hrho}
        \Big|H_\rho-\frac{n-1}\rho\Big|\leq1,\qquad\forall\,\rho\leq1/C.
    \end{equation}
    Consider the function
    \[\bar w_p=\bar w_p(r)=(n-p)\log r+\frac1{1-Cr},\qquad\forall\,r\leq1/C.\]
    Differentiating,
    \[\D\bar w_p=\Big[\frac{n-p}r+\frac{C}{(1-Cr)^2}\Big]\p_r,\]
    and
    \[\begin{aligned}
        \Delta_p\bar w_p-|\D\bar w_p|^p &= |\D\bar w_p|^{p-2}\Big[(p-1)\Big(\!-\frac{n-p}{r^2}+\frac{2C^2}{(1-Cr)^3}\Big)+H_r\Big(\frac{n-p}r+\frac{C}{(1-Cr)^2}\Big) \\
        &\hspace{80pt} -\Big(\frac{n-p}r+\frac{C}{(1-Cr)^2}\Big)^2\Big] \\
        &\hspace{-36pt}\leq |\D\bar w_p|^{p-2}\Big[\frac{2(p-1)C^2}{(1-Cr)^3}+\frac{(n-1)C+(n-p)(1-Cr)^2+Cr-2(n-p)C}{r(1-Cr)^2} \\
        &\hspace{36pt} -\frac{C^2}{(1-Cr)^4}\Big],
    \end{aligned}\]
    where we inserted \eqref{eq-pcore:Hrho} into the expression. When $C$ is sufficiently large and $p$ is sufficiently close to $1$, the numerator of the second term is bounded above by $-C/2$. Hence we have
    \[\frac{\Delta_p\bar w_p-|\D\bar w_p|^p}{|\D\bar w_p|^{p-2}}\leq\frac{2(p-1)C^2}{(1-Cr)^3}-\frac{C/2}{r(1-Cr)^2}-\frac{C^2}{(1-Cr)^4},\]
    and the right hand side is negative by Young's inequality. So by possibly increasing $C$, $\bar w_p$ is a supersolution of the $p$-IMCF equation, and hence $e^{-\bar w_p/(p-1)}$ is $p$-subharmonic in $B_{1/C}(o)$. By the comparison principle, we have
    \begin{equation}\label{eq-pcore:sharp_upper}
        w_p\leq\bar w_p-1+\log\Big[\omega_{n-1}\Big(\frac{p-1}{n-p}\Big)^{1-p}\Big]\qquad\text{in}\ \ B_{1/C}(o)\backslash\{o\}.
    \end{equation}
    Therefore, for some possibly larger generic constant $C$, we have
    \begin{equation}\label{eq-pcore:upper1}
        w_p\leq(n-p)\log r+C\qquad\text{in}\ \ B_{1/C}(o)\backslash\{o\}.
    \end{equation}

    Now, combining the two-sided bounds \eqref{eq-pcore:lower1} \eqref{eq-pcore:upper1} and the gradient estimate \eqref{eq-pcore:grad_est_p}, and Arzela-Ascoli's theorem, it follows that for some sequence $p\to1$ we can extract a limit
    \[w_p\to w\qquad\text{in}\ \ C^0_{\loc}(M\backslash\{o\}),\]
    for some solution $w$ of the weak IMCF in $M\backslash\{o\}$. The $C^0$ bounds \eqref{eq-pcore:lower1} \eqref{eq-pcore:lower2} \eqref{eq-pcore:upper1} imply that $w\geq(b-1)\log r-C$ outside $B_1(o)$ and
    \begin{equation}\label{eq-pcore:bound_for_w}
        (n-1)\log r-C\leq w\leq(n-1)\log r+C\qquad\text{in}\ \ B_{1/C}(o).
    \end{equation}

    It remains to show that 
    \[
    w = (n-1)\log r+\log\omega_{n-1} + o_r(1)
    \]
    as $x \to o$. Once this is proved, it follows that $\tilde w=w-\log\omega_{n-1}$ is an IMCF core with pole $o$, and satisfies the lower bounds \eqref{eq-pcore:lower1} \eqref{eq-pcore:lower2} together with the gradient estimate $|\D\tilde w|\leq(n-1)e^{-\tilde w/(n-1)}$ due to \eqref{eq-pcore:grad_est_p}. Thus, the proof of Theorem \ref{thm-pcore:p_core} is complete.

    We first claim that $w$ is approximately constant on $\p B_\rho(o)$ as $\rho\to0$. If this is not the case, then there is a sequence $\rho_i\to0$ and points $x_i,y_i\in\p B_{\rho_i}(o)$, such that $|w(x_i)-w(y_i)|\geq\eps$ for a uniform $\eps>0$. Then consider the shifted sequence $w_i=w-(n-1)\log\rho_i$ and the rescaled metric $g'_i=\rho_i^{-2}g$. Due to \eqref{eq-pcore:bound_for_w}, the functions $w_i$ are uniformly bounded in $B'_R(o)\backslash B'_{1/R}(o)$ for all $R>0$. Together with the gradient estimate, some subsequence of $w_i$ converges in $C^0_{\loc}(\mathbb R^n\backslash\{0\})$ to a weak IMCF $w_\infty$ with
    \[(n-1)\log|x|-C\leq w_\infty\leq(n-1)\log|x|+C,\qquad\forall\,x\in\R^n\backslash\{0\}.\]
    By Huisken-Ilmanen's Liouville theorem \cite[Proposition 7.2]{huiskenilmanen}, it follows that $w_\infty=(n-1)\log|x|+C'$ for some constant $C'$. On the other hand, we may take a further subsequence so that $x_i\to x_\infty\in\p B_1(0)$, $y_i\to y_\infty\in\p B_1(0)$. Hence, $\eps\leq\vert w_i(x_i)-w_i(y_i)|\to |w_\infty(x_\infty)-w_\infty(y_\infty)|=0$. This is a contradiction, and the claim follows.
    
    Thus, we may write $w=f(r)+\eta$ where $f:(0,1/C)\to\RR$ is some function in $r$, and $\eta\to0$ uniformly as $r\to0$. By further increasing $C$, we may assume $|\eta|<C$.
    By \eqref{eq-pcore:bound_for_w}, for all $s<1/C$ we have
    \begin{equation}\label{eq-pcore:aux1}
        f(s)\leq(n-1)\log(s)+C+\max_{\p B_{s}(o)}|\eta|\leq(n-1)\log(s)+2C.
    \end{equation}
    Now we notice that, for all $s<1/C$, we have
    \begin{equation}\label{eq-pcore:ineq_for_f}
        w\geq f(s)-\max_{\p B_s(o)}|\eta|+(n-1)\log\frac rs-3C^2r\qquad\text{on}\ \ \p B_s(o)\cup\p B_{1/C}(o).
    \end{equation}
    This is obvious on $\p B_s(o)$, and follows from \eqref{eq-pcore:bound_for_w} \eqref{eq-pcore:aux1} on $\p B_{1/C}(o)$. Since the right hand side of \eqref{eq-pcore:ineq_for_f} is a subsolution of IMCF in $B_{1/C}(o)\backslash B_s(o)$, by the maximum principle, we have
    \[w\geq f(s)-\max_{\p B_s(o)}|\eta|+(n-1)\log\frac rs-3C^2r\qquad\text{in}\ \ B_{1/C}(o)\backslash B_s(o).\]
    Similarly, one has
    \[w\leq f(s)+\max_{\p B_s(o)}|\eta|+(n-1)\log\frac rs+3C^2r\qquad\text{in}\ \ B_{1/C}(o)\backslash B_s(o).\]
    From these inequalities it follows that
    \[f(s)-(n-1)\log s-o_r(1)\leq f(r)-(n-1)\log r\leq f(s)+(n-1)\log s+o_r(1),\]
    for all $0<s<r\ll1/C$, where $o_r(1)$ is an expression that converges to $0$ as $r\to0$. This implies that $f(r)-(n-1)\log r$, hence $w-(n-1)\log r$ as well, uniformly converges to a constant when $r\to0$.

    Denote by $\alpha$ the constant such that
    \begin{equation}\label{eq-pcore:asymp_limit}
        w=(n-1)\log r+\alpha+o_r(1).
    \end{equation}
    It follows from \eqref{eq-pcore:sharp_upper} that $\alpha\leq\log\omega_{n-1}$.

    Let us show that in fact $\alpha=\log\omega_{n-1}$. We argue similarly as in \cite[Theorem 5.2]{ajm}. Take a small radius $R_0>0$ and a constant $K>0$, so that the sectional curvature is at most $K$ in $B_{R_0}(o)$ and $R_0<\pi/2\sqrt K$. Let $\bar\gr_p$ be the Green's kernel in the space form of curvature $K$. It can be explicitly computed that
    \begin{equation}\label{eq-pcore:bar_gr}
        \bar\gr_p(r)=\int_r^{R_0}v_K(s)^{-\frac1{p-1}}\,ds,
    \end{equation}
    where $v_K(s)$ is the volume of the sphere of radius $s$ in the model space. By comparison, 
    \[\gr_p\leq\bar\gr_p(r)+\sup_{\p B_{R_0}(o)}\gr_p.\]
    Combining this with the upper bound \eqref{eq-pcore:gr_lower}, it follows that
    \[\gr_p\leq\bar\gr_p(r)+C^{\frac1{p-1}}R_0^{-\frac{n-p}{p-1}}.\]
    Renormalizing and then taking $p\to1$, we obtain
    \[\begin{aligned}
        e^{-w}=\lim_{p\to1^+}e^{-w_p} &\leq \lim_{p\to1^+}\Big[\bar\gr_p(r)+C^{\frac1{p-1}}R_0^{-\frac{n-p}{p-1}}\Big]^{p-1} \\
        &=\max\Big\{\lim_{p\to1^+}\bar\gr_p(r)^{p-1},CR_0^{p-n}\Big\} \\
        &\leq \max\Big\{\frac1{v_K(r)},CR_0^{p-n}\Big\},
    \end{aligned}\]
    where in the last line we have used \cite[Lemma 4.1]{ajm}. Combining this with the asymptotic $w=(n-1)\log r+\alpha+o_r(1)$ and $v_K(r)\sim\omega_{n-1}r^{n-1}$, it follows that $\alpha\geq\log\omega_{n-1}$.

    Hence, we obtain the sharp asymptotics
    \[\lim_{p\to1^+}w_p=w=(n-1)\log r+\log\omega_{n-1}+o_r(1),\]
    as claimed. Combining this with \eqref{eq-pcore:grad_est_p}, we obtain the main gradient estimate \eqref{eq-pcore:grad_est}.
\end{proof}

\subsection{Proof of Theorem \ref{thm-intro:main_euc_isop}}

We restate Theorem \ref{thm-intro:main_euc_isop} for the reader's convenience:

\begin{theorem}\label{thm-growth:proper_core}
    Suppose $M$ satisfies the Euclidean isoperimetric inequality
    \begin{equation}\label{eq-growth:euc_isop}
        \Ps{E}\geq c_I|E|^{\frac{n-1}n},\qquad\forall\ E\Subset M,
    \end{equation}
    Then there exists a unique proper IMCF core $u$ with pole $o$, with the growth estimate
    \begin{equation}\label{eq-growth:growth_final}
	u(x)\geq (n-1)\log r(x)-C(n,c_I),\qquad\forall\,x\in M\backslash\{o\}.
    \end{equation}
\end{theorem}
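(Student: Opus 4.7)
The plan is to first construct an IMCF core via a subsequential $p$-limit, and then to establish the growth estimate --- which implies properness --- via the Euclidean isoperimetric inequality applied to the sublevel sets $E_t$. For the construction, I would follow the approach outlined in Subsection~\ref{subsec:ajm}(i): the decay estimate for $\inf_{\partial B_t(o)} \gr_p$ furnished by Theorem~\ref{teo_sobolev_inf} (applied with $\nu = n$ and weight $\eta \equiv 1$, since \eqref{eq-growth:euc_isop} yields the Euclidean $L^p$-Sobolev inequality), together with the Moser-type gradient estimate for the reparametrized $p$-Green kernels from \cite[Theorem 2.19]{ajm}, guarantees that the $\varrho_p$ remain uniformly Lipschitz and do not degenerate on any sphere $\partial B_t(o)$. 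An Arzel\`a--Ascoli extraction then provides a subsequential limit $u = \lim_{p_i \to 1^+} w_{p_i}$ in $C^0_{\mathrm{loc}}(M \setminus \{o\})$, which is a weak IMCF on $M \setminus \{o\}$; repeating the local asymptotic-at-$o$ argument from the proof of Theorem~\ref{thm-pcore:p_core} identifies $u - (n-1)\log r \to \log \omega_{n-1}$ as $x \to o$, so that (after a harmless normalization) $u$ is an IMCF core.

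Next I would derive the sharp area identity $|\partial E_t| = \omega_{n-1} e^t$ by sending $s \to -\infty$ in the Huisken--Ilmanen monotonicity $|\partial E_t| = e^{t-s}|\partial E_s|$ and using the pole asymptotic $|\partial E_s| \sim \omega_{n-1} e^s$. Inserted into \eqref{eq-growth:euc_isop} applied to $E_t$, this gives the volume bound $|E_t| \leq \big(\omega_{n-1}/c_I\big)^{n/(n-1)} e^{nt/(n-1)}$.

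The crucial --- and hardest --- step is translating these bounds into the diameter estimate $E_t \subset B_{Ce^{t/(n-1)}}(o)$, which is equivalent to \eqref{eq-growth:growth_final} and implies properness. My approach would be a Federer--Fleming style slicing of $E_t$ along the radial function $r$: writing $g(s) = |E_t \cap B_s(o)|$ and $\sigma(s) = \mathcal{H}^{n-1}(E_t \cap \partial B_s(o))$, applying \eqref{eq-growth:euc_isop} separately to $E_t \cap B_s(o)$ and to $E_t \setminus B_s(o)$ and combining the two yields
\begin{equation*}
    \sigma(s) \geq c_I\,\max\!\Big(g(s)^{\frac{n-1}{n}},\ \big(|E_t|-g(s)\big)^{\frac{n-1}{n}}\Big) - \omega_{n-1} e^t.
\end{equation*}
The main obstacle is that \eqref{eq-growth:euc_isop} is essentially saturated on IMCF level sets, so this right-hand side can fail to give a useful positive lower bound on $\sigma$, and naive integration does not close the argument; indeed, for $n \geq 4$, thin cylindrical tubes in a general manifold show that bounded volume and perimeter do not by themselves force bounded diameter. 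To bypass this I would invoke the outward-minimizing property of $E_t$ inherited from the Huisken--Ilmanen construction (or from the $p$-IMCF approximation), which rules out precisely such thin long protrusions and upgrades the above slicing estimate to a genuine decay of $|E_t \setminus B_s(o)|$ in $s$. Uniqueness of the proper IMCF core would then follow from the standard maximum principle \cite[Theorem 2.2]{huiskenilmanen}, exactly as in the proof of Theorem~\ref{thm-pcore:p_core}.
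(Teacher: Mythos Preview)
Your construction via $p$-limits differs from the paper's approach, and your growth-estimate argument has a genuine gap at the step you yourself flag as hardest.

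\textbf{On the construction.} The paper does not build the IMCF core as a limit of $p$-IMCF cores. Instead, for each small $\rho$ it takes the \emph{proper} weak IMCF $\tilde u_\rho$ with initial condition $B_\rho(o)$ (which exists by \cite{Xu_2023_proper} under \eqref{eq-growth:euc_isop}), renormalizes to $u_\rho=\tilde u_\rho+(n-1)\log\rho$, and sends $\rho\to0$. The advantage is that each $u_\rho$ is already proper, so every sublevel set is precompact and the slicing argument below is applied to honest compact sets. Your $p$-limit produces an IMCF core whose properness is precisely what is in question; indeed, if your full program worked it would positively resolve Question~\ref{qs-intro:p_convergence}, which the paper leaves open.

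\textbf{On the growth estimate.} Your slicing of $E_t$ along $r$ is the right geometric picture, and the paper's proof of Claim~3 follows exactly this outline. But the tool you name---outward-minimizing---is not what closes the argument. Outward minimality only compares $\Ps{E_t}$ with perimeters of \emph{larger} sets, whereas the slicing requires bounding the exterior piece $A(s)=\P{E_t;M\backslash\overline{B_s(o)}}$ by the slice $S(s)=\P{B_s(o);E_t}$, i.e.\ comparing with the \emph{smaller} set $E_t\cap B_s(o)$. The correct tool is the \emph{excess inequality} \cite[Lemma~2.21]{Xu_2024_obstacle}, which uses the full variational structure of the IMCF (minimization of $J_u^K$, not just perimeter) to give
\[
\Ps{E_T}\le\P{E_T\cap B_s(o)}+\Big[\exp\big(T-\inf_{E_T\backslash B_s(o)}u\big)-1\Big]S(s).
\]
This yields $A(s)\le e^{n-1}S(s)$ \emph{provided} one already knows $E_{T-n+1}\subset B_s(o)$. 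Hence a bootstrap in $T$ is required: the paper packages this via the quantity $D(\rho,t)=e^{-t/(n-1)}\sup_{E_t(u_\rho)}r$ and argues by contradiction at the first $T$ where $D(\rho,T)>C_1$ while $D(\rho,T-n+1)\le C_1$. Your proposal neither identifies the excess inequality nor sets up this inductive structure; without both, the differential inequality for $V(s)=|E_t\backslash B_s(o)|$ does not close.
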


The idea for proof is as follows. For each small $\rho>0$, we let $u_\rho$ be the unique proper weak IMCF on $M$ that starts from $B_\rho(o)$. The existence of such $u_\rho$ follows from \eqref{eq-growth:euc_isop} and \cite{Xu_2023_proper}. Then, we argue that $\big(u_\rho+(n-1)\log \rho\big)$ converges in $C^0_{\loc}(M\backslash\{o\})$ to the desired IMCF core as $\rho\to0$.

The key step in this proof is to obtain a lower bound of $u_\rho$ that is uniform when $\rho\to0$. The previous bound in \cite{Xu_2023_proper} is not enough for this purpose: Taking $E_0=B_\rho(o)$ into \cite[Theorem 4.1]{Xu_2023_proper}, the following bound was proved:
\begin{equation}\label{eq-intro:non_sharp}
    u(x)\geq\frac{n-1}n\log\big[r(x)-\rho\big]-\frac{n-1}n\log\rho-C(n,c_I),\qquad\forall\,x\in M\backslash B_\rho(o).
\end{equation}
However, the leading coefficient of $\log r(x)$ is nonoptimal, and the constant term diverges as $r\to0$ after the $+(n-1)\log\rho$ renormalization. The strategy here is to combine the diameter estimate technique in \cite[Lemma 4.4]{Xu_2023_proper} with a soft blow-up argument (see Claim 3 in the proof below), which gives a uniform lower bound of $u_\rho$.

The following two technical lemmas will be used in showing the asymptotics near $o$. They are not essential in showing the growth estimate \eqref{eq-growth:growth_final}.

\begin{lemma}\label{lemma-growth:small_ball_out_min}
    Suppose $M$ satisfies \eqref{eq-growth:euc_isop}. Then for each $o\in M$, there exists a sufficiently small radius $\rho_0$, so that $B_\rho(o)$ is outward perimeter-minimizing for all $\rho\leq\rho_0$.
\end{lemma}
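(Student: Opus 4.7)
The plan is to study the outer perimeter-minimizing hull $H=H(B_\rho(o))$ and show that $H=B_\rho(o)$ for all sufficiently small $\rho$; by definition this yields the outward perimeter-minimizing property. Under assumption \eqref{eq-growth:euc_isop}, BV compactness together with the volume bound $|F|\leq (\Ps{F}/c_I)^{n/(n-1)}$ from the isoperimetric inequality produces such a hull $H\supset B_\rho(o)$ with $\Ps{H}\leq \Ps{B_\rho(o)}$, and outer minimality forces $\partial H\cap(M\setminus\overline{B_\rho(o)})$ to be a stationary integer varifold (both inward and outward normal variations are admissible in this open region).

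The proof then splits into a confinement step and a local calibration step. For the confinement, I would fix $r_0\in(0,\mathrm{inj}(o))$ small enough so that $r=d(\cdot,o)$ is smooth on $B_{r_0}(o)\setminus\{o\}$ and $\Delta r>0$ there, which is possible by the expansion $\Delta r=(n-1)/r+O(r)$. Letting $t_0<r_0/2$ be a radius on which the monotonicity formula for stationary varifolds is uniformly valid in $B_{r_0}(o)$, and denoting by $r^*=\sup_{x\in H}d(x,o)$ and by $p\in\partial H$ a point attaining $r^*$, the monotonicity formula at $p$ yields
\[
\Ps{H}\geq \mathcal{H}^{n-1}\bigl(\partial^*H\cap B_t(p)\bigr)\geq c_0\,t^{n-1}\qquad\text{for all }\, t\leq\min(r^*-\rho,t_0),
\]
with $c_0>0$ depending only on the geometry of $B_{r_0}(o)$. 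Since $\Ps{H}\leq \Ps{B_\rho(o)}\leq C\rho^{n-1}$ for $\rho$ small, this forces $r^*-\rho\leq(C/c_0)^{1/(n-1)}\rho=:C_2\rho$, and hence $H\subset B_{r_0}(o)$ whenever $\rho\leq\rho_0:=r_0/(1+C_2)$.

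For the calibration step, having $H\subset B_{r_0}(o)$ and $H\supset B_\rho(o)$ and using that $\Delta r$ is integrable near $o$ (since $\Delta r\sim(n-1)/r$), I would apply the divergence theorem on $H\setminus\overline{B_\varepsilon(o)}$ with vector field $X=\nabla r$ and pass to the limit $\varepsilon\to 0$:
\[
\Ps{B_\rho(o)}=\int_{B_\rho(o)}\Delta r\leq \int_H \Delta r=\int_{\partial^*H}\langle\nabla r,\nu_H\rangle\leq \Ps{H}.
\]
The first inequality is strict whenever $H\supsetneq B_\rho(o)$ (since $\Delta r>0$ on $B_{r_0}(o)\setminus\{o\}$), while the last uses $|\nabla r|=1$. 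Combined with $\Ps{H}\leq\Ps{B_\rho(o)}$, this forces $H=B_\rho(o)$, which is the desired conclusion.

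The main obstacle will be the confinement step, which rests on the GMT regularity of outer perimeter-minimizing hulls needed to treat $\partial H$ as a stationary varifold in $M\setminus\overline{B_\rho(o)}$ and to apply the monotonicity formula with a constant $c_0$ uniform near $o$. A streamlined alternative, avoiding direct appeal to varifold machinery, would be to adapt the diameter-bound technique of \cite[Lemma 4.4]{Xu_2023_proper} (explicitly invoked in the discussion preceding this lemma) so as to bound $\mathrm{diam}(H)$ in terms of $\Ps{B_\rho(o)}$ and $c_I$ directly.
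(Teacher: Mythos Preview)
Your overall architecture---produce the outward-minimizing hull $H$, confine it to a small ball around $o$, then force $H=B_\rho(o)$---is exactly the paper's. The difference lies in how you execute the two steps.

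The confinement step via monotonicity is circular as written. You fix $t_0$ and $c_0$ depending on the geometry of $B_{r_0}(o)$, then apply the monotonicity formula at the farthest point $p\in\partial H$; but nothing you have said so far places $p$ inside $B_{r_0}(o)$, and if $r^*>r_0$ the constants $t_0,c_0$ are simply unavailable at $p$. One can repair this by a dichotomy (if $r^*>r_0$, find an intermediate point of $\partial H$ inside $B_{r_0}(o)$ and derive a fixed positive lower bound for $P(H)$, contradicting $P(H)\le C\rho^{n-1}$), but this requires some care with indecomposability of $H$ and is not what you wrote. Your proposed alternative---the diameter bound from \cite{Xu_2023_proper}---is precisely what the paper does: it quotes \cite[(3.8)]{Xu_2023_proper} to get $E\subset B_R(o)$ with $R=\rho+2nc_I^{-n/(n-1)}P(B_\rho(o))^{1/(n-1)}\to 0$ as $\rho\to0$, bypassing varifold monotonicity entirely.

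For the identification step your calibration argument with $X=\nabla r$ is a correct and rather clean alternative to the paper's approach. The paper instead observes that $\partial E\setminus\partial B_\rho(o)$ is stationary (a minimal hypersurface in the varifold sense) and that, once $E\subset B_R(o)$ with $R$ small, the geodesic spheres $\partial B_{R'}(o)$ are strictly mean convex barriers; the strong maximum principle then forces $E=B_\rho(o)$. Your integral inequality $\int_{B_\rho}\Delta r\le\int_H\Delta r\le P(H)$ is the divergence-theorem version of that same mean-convexity, and has the mild advantage of not invoking regularity of $\partial H$ beyond its being a set of finite perimeter.
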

\begin{proof}
    Suppose $\rho\leq\rho_0$. By \cite[Corollary 3.15]{Xu_2023_proper}, $B_\rho(o)$ admits a strictly outward area-minimizing hull $E\Subset M$. See \cite[Theorem 3.13]{Xu_2023_proper} for precise the definition of ``strictly outward area-minimizing hull''; here all we need are the following two properties:
    
    (i) if $B_\rho(o)$ is not outer area-minimizing, then $E\supsetneq B_\rho(o)$;
    
    (ii) $\p E\backslash\p B_\rho(o)$ is a minimal surface (rigorously saying, the support of a stationary integral varifold).
    
    Inserting our main condition \eqref{eq-growth:euc_isop} into \cite[(3.8)]{Xu_2023_proper}, we obtain a precise bound for the size of $E$: it holds $E\subset B_R(o)$ with
    \[R=\rho+2nc_I^{-\frac n{n-1}}\P{B_\rho(o)}^{\frac1{n-1}}.\]
    Here we note that the ``$+1$'' term in \cite[(3.8)]{Xu_2023_proper} is redundant, which is easily seen from the proof. Note that $R\to0$ when $\rho\to0$. Therefore, we may choose a small enough $\rho_0$ so that all the geodesic balls $B_{R'}(o)$, $R'\leq R$, are strictly mean convex. Then the maximum principle and fact (ii) above forces $E=B_\rho(o)$. The lemma then follows from fact (i) stated above.
\end{proof}

\begin{lemma}\label{lemma:subsol_extension}
    Let $\Omega\subset M$ be a $C^2$ domain, and $u\in\Lip_{\loc}(M)$. If the following hold:
    
    (i) $\Omega$ is locally outward perimeter-minimizing in $M$,

    (ii) $\Omega=E_T(u)$ and $u|_{M\backslash\Omega}\equiv T$ for some $T\in\RR$,
    
    (iii) $u|_\Omega$ is a subsolution of IMCF in $\Omega$,
    
    \noindent then $u$ is a subsolution of IMCF in $M$.
\end{lemma}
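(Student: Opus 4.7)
The goal is to check the variational inequality characterising a subsolution in $M$: for each $t\in\RR$, each $K\Subset M$, and each admissible $F$ with $E_t(u)\subset F$ and $F\backslash E_t(u)\Subset K$,
\[
J_u^K\big(E_t(u)\big)\leq J_u^K(F).
\]
Hypothesis (ii) gives $u<T$ on $\Omega$ and $u\equiv T$ on $M\backslash\Omega$, so $E_t(u)=E_t(u|_\Omega)\subset\Omega$ for $t\leq T$ (with $E_T=\Omega$), while $E_t(u)=M$ for $t>T$, which renders the inequality vacuous. In particular $|\D u|=0$ a.e.\ on $M\backslash\Omega$ since $u$ is locally constant there.

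\textbf{Sandwich argument.} Fix $t\leq T$ and $F$ as above, and introduce the truncation $\tilde F:=F\cap\Omega$, which still contains $E_t$ and is contained in $\Omega$. The plan is to sandwich
\[
J_u^K(E_t)\leq J_u^K(\tilde F)\leq J_u^K(F).
\]
For the first inequality, since $\overline{\tilde F\backslash E_t}$ is a compact subset of $\Omega$, I pick $K'\Subset\Omega$ containing it; hypothesis (iii) applied in $\Omega$ with test set $K'$ gives $J_{u|_\Omega}^{K'}(E_t)\leq J_{u|_\Omega}^{K'}(\tilde F)$, and since $E_t$ and $\tilde F$ agree outside $K'$, this lifts to the corresponding inequality with $K$.

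\textbf{Cost of truncation.} For the second inequality, the $|\D u|$-integrals of $F$ and $\tilde F$ coincide, since $F\backslash\tilde F=F\backslash\Omega$ is a set on which $|\D u|=0$; it therefore suffices to show $P(\tilde F;K)\leq P(F;K)$. Set $F':=F\cup\Omega$: then $F'\supset\Omega$ and $F'\backslash\Omega=F\backslash\Omega\subset F\backslash E_t\Subset K$, so hypothesis (i) yields $P(\Omega;K)\leq P(F';K)$. Combining this with the submodularity of perimeter
\[
P(F\cap\Omega;K)+P(F\cup\Omega;K)\leq P(F;K)+P(\Omega;K)
\]
gives $P(\tilde F;K)\leq P(F;K)$, as needed. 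No substantial obstacle arises: the main conceptual point is that submodularity of the perimeter combined with the outward-minimization (i) is precisely the mechanism that allows arbitrary outward competitors $F$ in $M$ to be replaced by competitors $\tilde F$ lying inside $\Omega$, where (iii) applies. The constancy of $u$ outside $\Omega$ ensures that this truncation leaves the energy term $\int|\D u|$ unchanged.
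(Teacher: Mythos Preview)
Your argument has a genuine gap in the step you call the ``first inequality'' of the sandwich. You assert that $\overline{\tilde F\backslash E_t}$ is a compact subset of $\Omega$, and then choose $K'\Subset\Omega$ containing it in order to apply hypothesis (iii). This claim is false in general for $t<T$. Indeed, $\tilde F\backslash E_t=(F\backslash E_t)\cap\Omega$; the set $F\backslash E_t$ is only precompact in $M$ (via $K\Subset M$), not in $\Omega$, and the intersection with $\Omega$ can accumulate on $\partial\Omega$. Concretely, take $\Omega=B_1(0)$, $E_t=B_{1/2}(0)$, $F=B_2(0)$: then $\tilde F=\Omega$ and $\tilde F\backslash E_t$ has closure containing all of $\partial\Omega$, so no $K'\Subset\Omega$ can contain it and (iii) is not directly applicable.

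This is precisely the subtle point the paper isolates. After the truncation step (which you handle correctly, via submodularity and (i)), the paper explicitly notes that $E'\backslash E_t$ is not precompact in $\Omega$ and therefore introduces an inner approximation $\Omega_\epsilon=\{d(\cdot,M\backslash\Omega)>\epsilon\}$, $E'_\epsilon=E'\cap\Omega_\epsilon$, $K_\epsilon=K\cap\Omega_\epsilon$. Since $t<T$ forces $\partial E_t\cap\partial\Omega=\emptyset$, one has $E'_\epsilon\supset E_t$ for small $\epsilon$, so (iii) now applies to the pair $(E_t,E'_\epsilon)$ inside $K_{\epsilon/2}\Subset\Omega$; the conclusion is recovered by letting $\epsilon\to0$ and invoking a convergence lemma for the perimeters. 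Your proof needs exactly this approximation to close.
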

\begin{proof}
    Our goal is to check that for each $t\in\RR$, every competitor set $E\supset E_t$, and domain $K$ satisfying $E\backslash E_t\Subset K\Subset M$, it holds $J_u^K(E)\geq J_u^K(E_t)$. Slightly modifying $K$, we may assume that $\H^{n-1}\big(\p K\cap\p\Omega\big)=0$.
    
    Note that if $t>T$, then $E_t=M$, hence $E=M$ and the desired statement is vacuous. For $t=T$, we have $E_T=\Omega$, thus the result is a trivial combination of
    \[\Ps{E;K}\geq\Ps{\Omega;K}\qquad\text{and}\qquad\int_{E\cap K}|\D u|=\int_{\Omega\cap K}|\D u|.\]
    Now we assume $t<T$, hence $E_t\subset\Omega$ and $\p E_t\cap\p\Omega=\emptyset$. Note that $E\backslash\Omega\subset E\backslash E_t\Subset K$. Hence, in the minimizing property of $\Omega$, we can make the valid comparison $\Ps{\Omega;K}\leq\Ps{E\cup\Omega;K}$, which implies $\Ps{E\cap\Omega;K}\leq\Ps{E;K}$. Moreover, $\D u\equiv0$ in $E\backslash\Omega$. Hence
    \[J_u^K(E)=\Ps{E;K}-\int_{E\cap K}|\D u|\geq\Ps{E\cap\Omega;K}-\int_{(E\cap\Omega)\cap K}|\D u|.\]
    Setting $E'=E\cap\Omega$, it remains to show that
    \begin{equation}\label{eq-growth:subsol_ext_aux1}
        \Ps{E';K}-\int_{E'\cap K}|\D u|\geq\Ps{E_t;K}-\int_{E_t\cap K}|\D u|.
    \end{equation}
    Note that this does not directly follow from condition (iii), since $E'\backslash E_t$ is not precompact in $\Omega$. To show this, we consider the inner approximation
    \[\Omega_\epsilon=\big\{d(\cdot,M\backslash\Omega)>\epsilon\big\},\quad E'_\epsilon=E'\cap\Omega_\epsilon,\quad K_\epsilon=K\cap\Omega_\epsilon.\]
    Since $E'\backslash E_t\Subset M$ and $\p E_t\cap\p\Omega=\emptyset$, one finds that $E'_\epsilon\supset E_t$ for small enough $\epsilon$. Hence we can use (iii) to make the comparison
    \[\P{E'_\epsilon;K_{\epsilon/2}}-\int_{E'_\epsilon\cap K_{\epsilon/2}}|\D u|\geq \P{E_t,K_{\epsilon/2}}-\int_{E_t\cap K_{\epsilon/2}}|\D u|.\]
    Finally, we take $\epsilon\to0$ in this inequality. By a technical lemma \cite[Lemma 2.23]{Xu_2024_obstacle} we have $\P{E'_\epsilon,K_{\epsilon/2}}=\P{E'_\epsilon;K}\to\P{E';K}$. Further, note that $\Ps{E_t;K_{\epsilon/2}}=\Ps{E_t;K}$ for all small $\epsilon$ (since $\p E_t\cap\p\Omega=\emptyset$). Hence we obtain \eqref{eq-growth:subsol_ext_aux1} in the limit.
\end{proof}

\begin{proof}[Proof of Theorem \ref{thm-growth:proper_core}] {\ }
    
    Let $\rho_0<10^{-n}$ be as in Lemma \ref{lemma-growth:small_ball_out_min}. Further decreasing $\rho_0$, we may assume that
    \begin{equation}\label{eq-growth:small_spheres}
        \P{B_\rho(o)}\leq 2\omega_{n-1}\rho^{n-1},\qquad\forall\,\rho\leq\rho_0,
    \end{equation}
    and the mean curvature $H_\rho$ of $B_\rho(o)$ satisfies
    \begin{equation}\label{eq-growth:Hr}
        \big|H_\rho-(n-1)\rho^{-1}\big|<1,\qquad\forall\,\rho\leq\rho_0.
    \end{equation}
    For each $\rho\leq\rho_0$, let $\tilde u_\rho$ be the unique proper solution of IMCF in $M$ with initial value $B_\rho(o)$, as given by \cite[Theorem 1.2]{Xu_2023_proper}. Then set
    \[u_\rho=\tilde u_\rho+(n-1)\log\rho.\]

    We first establish a precise asymptotic of $u_\rho$ near $o$. Recall our notation $r=d(\cdot,o)$. Set
    \[\uu=(n-1)\log r-r,\qquad \ou=(n-1)\log r+r+\frac1{1-r/\rho_0}-1.\]
    Due to \eqref{eq-growth:Hr}, it is easy to verify
    \[
    \diver \left(\frac{\D\uu}{|\D\uu|} \right)>|\D\uu|\qquad\text{and}\qquad 
    \diver \left(\frac{\D\ou}{|\D\ou|}\right)<|\D\ou|\qquad\text{in}\ \ B_{\rho_0}(o)\backslash\{o\},\]
    namely, $\uu,\ou$ are subsolution and supersolution of IMCF in $B_{\rho_0}(o)\backslash\{o\}$.
    
    \vspace{6pt}
    \noindent\textbf{Claim 1.} $u_\rho\geq\uu$ in $B_{\rho_0}(o)\backslash B_\rho(o)$ and $u_\rho\geq(n-1)\log\rho_0-\rho_0$ on $M\backslash B_{\rho_0}(o)$.
    
    \vspace{3pt}
    \noindent\textit{Proof.} Extend $\uu$ by the constant value $(n-1)\log\rho_0-\rho_0$ on $M\backslash B_{\rho_0}(o)$. Combining Lemma \ref{lemma-growth:small_ball_out_min} and \ref{lemma:subsol_extension} (with the choice $\Omega=B_{\rho_0}(o)\backslash\{o\}$), $\uu$ is a subsolution of IMCF in $M\backslash\{o\}$. Moreover, we have $u_\rho>\uu$ in $\p B_\rho(o)$. Thus by the maximum principle \cite[Theorem 2.2(i)]{huiskenilmanen}, we have $u_\rho\geq\uu$ on $M\backslash B_\rho(o)$. The claim follows.\ \ $\Box$
    
    \vspace{6pt}
    \noindent\textbf{Claim 2.} $u_\rho\leq\ou$ in $B_{\rho_0}(o)\backslash B_\rho(o)$.
    
    \vspace{3pt}
    \noindent\textit{Proof.} Apply \cite[Theorem 2.2]{huiskenilmanen} inside $B_{\rho_0}(o)\backslash B_\rho(o)$, noting that $u_\rho<\ou$ in $\p B_\rho(o)$ hence $\{u_\rho>\ou\}\Subset B_{\rho_0}(o)\backslash B_\rho(o)$.\ \ $\Box$
    
    \vspace{3pt}
    Next, we prove the key growth estimate for $u_\rho$. For each $\rho\leq\rho_0$ and $t\geq(n-1)\log\rho$, define
    \[D(\rho,t):=e^{-\frac t{n-1}}\sup_{E_t(u_\rho)}(r).\]
    %\[D(r,t)=\omega_{n-1}^{1/(n-1)}\cdot\frac{\sup_{\p E_t(u_r)}\big(d(\cdot,o)\big)}{\P{E_t(u_r)}^{1/(n-1)}}.\]
    This quantity roughly measures the diameter of $E_t(u_\rho)$ compared to its area. Set
    \begin{equation}\label{eq-growth:choice_of_C}
        C_1=1+c_I^{-\frac n{n-1}}n(2\omega_{n-1})^{\frac1{n-1}}\frac{1+e^{n-1}}{1-e^{-1}}.
    \end{equation}
    
    \vspace{3pt}
    \noindent\textbf{Claim 3.} $D(\rho,t)\leq C_1$ for all $\rho<\rho_0/10$ and $t\geq(n-1)\log\rho$. As a result, we have
    \begin{equation}\label{eq-growth:lower_bound}
        u_\rho(x)\geq(n-1)\log\big[C_1^{-1}r(x)\big],\qquad\,x\in M\backslash B_\rho(o).
    \end{equation}
    
    \vspace{3pt}
    \noindent\textit{Proof.} Fix $\rho<\rho_0/10$.
    When $t\leq(n-1)\log\rho+n-1$, we may apply the argument in \cite[Lemma 4.4]{Xu_2023_proper} to obtain
    \[\begin{aligned}
        E_t(u_\rho) &\subset B\Big(o,\rho+\big(1+e^{t-(n-1)\log\rho}\big)nc_I^{-\frac{n}{n-1}}e^{\frac{t-(n-1)\log\rho}{n-1}}\P{B_\rho(o)}^{\frac1{n-1}}\Big) \\
        &\subset B\Big(o,\rho+\big(1+e^{n-1}\big)ne^{\frac{t}{n-1}}c_I^{-\frac{n}{n-1}}(2\omega_{n-1})^{\frac1{n-1}}\Big)\qquad\text{(by \eqref{eq-growth:small_spheres})}.
    \end{aligned}\]
    Hence $D(\rho,t)\leq 1+(1+e^{n-1})nc_I^{-\frac{n}{n-1}}(2\omega_{n-1})^{\frac1{n-1}}\leq C_1$.
    
    Now suppose that Claim 3 is not true. Then there exists $T>(n-1)\log\rho+n-1$ so that
    \begin{equation}\label{eq-growth:to_contradict}
        D(\rho,T)>C_1\qquad\text{and}\qquad D(\rho,T-n+1)\leq C_1,
    \end{equation}
    By the definition of $D(\rho,t)$, \eqref{eq-growth:to_contradict} means that
    \begin{equation}\label{eq-growth:cond_T_1}
        E_{T-n+1}(u_\rho)\subset B\big(o,C_1e^{T/(n-1)-1}\big)
    \end{equation}
    while
    \begin{equation}\label{eq-growth:cond_T_2}
	\big|E_T(u_\rho)\backslash B\big(o,C_1e^{T/(n-1)}\big)\big|>0.
    \end{equation}
    
    We now perform an argument which is similar to \cite[Lemma 4.4]{Xu_2023_proper}. For almost every $s\in\big[C_1e^{T/(n-1)-1},C_1e^{T/(n-1)}\big]$, define the quantities
    \[A(s)=\P{E_T(u_\rho);M\backslash\overline{B_s(o)}},\quad S(s)=\P{B_s(o);E_T(u_\rho)}\]
    and
    \[V(s)=\big|E_T(u_\rho)\backslash B_s(o)\big|.\]
    Using the excess inequality \cite[Lemma 2.21]{Xu_2024_obstacle}, see also \cite[(4.10)\,$\sim$\,(4.13)]{Xu_2023_proper}, we have
    \[\begin{aligned}
        \P{E_T(u_\rho)} &\leq \P{E_T(u_\rho)\cap B_s(o)} \\
        &\qquad +\Big[\!\exp\big(T-\inf_{E_T(u_\rho)\backslash B_s(o)}(u_\rho)\big)-1\Big]\P{B_s(o);E_T(u_\rho)}.
    \end{aligned}\]
    By \eqref{eq-growth:cond_T_1}, we have $\inf_{E_T(u_\rho)\backslash B_s(o)}(u_r)\geq T-n+1$. Hence for almost every $s$ it holds
    \begin{equation}\label{eq-growth:aux1}
        A(s)\leq e^{n-1}S(s).
    \end{equation}
    Next, the isoperimetric inequality and coarea formula provide
    \begin{equation}\label{eq-growth:aux2}
        A(s)+S(s)\geq c_IV(s)^{\frac{n-1}n},\qquad S(s)=-\frac{d}{ds}V(s),
    \end{equation}
    for almost every $s$. Combining \eqref{eq-growth:aux1} \eqref{eq-growth:aux2}, we have the differential inequality
    \begin{equation}\label{eq-growth:aux3}
        \frac d{ds}V(s)\leq -\frac{c_I}{1+e^{n-1}}V(s)^{\frac{n-1}n}.
    \end{equation}
    Integrating in $s\in\big[C_1e^{T/(n-1)-1},C_1e^{T/(n-1)}\big]$, and using \eqref{eq-growth:cond_T_2}, we have
    \begin{equation}\label{eq-growth:aux4}
        V\big(C_1e^{T/(n-1)-1}\big)^{\frac1n}\geq C_1e^{T/(n-1)}\cdot\frac{c_I(1-e^{-1})}{n(1+e^{n-1})}.
    \end{equation}
    On the other hand, by the isoperimetric inequality we have
    \begin{equation}\label{eq-growth:aux5}
        V\big(C_1e^{T/(n-1)-1}\big) \leq |E_T(u_\rho)|\leq c_I^{-\frac{n}{n-1}}\P{E_T(u_\rho)}^{\frac n{n-1}},
    \end{equation}
    and by \eqref{eq-growth:small_spheres} and the exponential growth of area, we have
    \begin{equation}\label{eq-growth:aux6}
        \P{E_T(u_\rho)}\leq e^{T-(n-1)\log\rho}\P{B_\rho(o)}\leq e^T\rho^{1-n}\cdot 2\omega_{n-1}\rho^{n-1}=2\omega_{n-1}e^T.
    \end{equation}
    Combining \eqref{eq-growth:aux4} \eqref{eq-growth:aux5} \eqref{eq-growth:aux6}, we obtain
    \[\frac{c_I(1-e^{-1})}{n(1+e^{n-1})}\cdot C_1e^{\frac T{n-1}}\leq c_I^{-\frac1{n-1}}(2\omega_{n-1})^{\frac1{n-1}}e^{\frac T{n-1}},\]
    contradicting \eqref{eq-growth:choice_of_C} (note that the factors $e^{T/(n-1)}$ cancel out). The claim is thus proved.\ \ $\Box$
    
    \vspace{3pt}
    
    We are ready to take the limit $\rho\to 0$. Since the solutions $u_\rho$ come from Huisken-Ilmanen's elliptic regularization, we have the gradient estimate \cite[Theorem 2.7]{Xu_2023_proper}
    \begin{equation}\label{eq-growth:gradient}
        |\D u_\rho|(x)\leq\frac{C(n)}{\min\big\{r(x),\sigma(x)\big\}},\qquad\forall\,\rho\leq r(x)/2,
    \end{equation}
    where $\sigma(x)$ depends only on the local geometry of $M$ near $x$. Combining \eqref{eq-growth:gradient} and Claim 1,2, we can use the Arzela-Ascoli theorem to extract a limit (for some subsequence)
    \[u=\lim_{\rho\to0}u_\rho\qquad\text{in}\ \ C^0_{\loc}\big(M\backslash\{o\}\big).\]
    It follows from Claim 1,2 that
    \begin{equation}
        \big|u-(n-1)\log r\big|<2r\qquad\text{on}\ \ B_{\rho_0/2}(o)\backslash\{o\},
    \end{equation}
    and it follows from \eqref{eq-growth:lower_bound} that
    \begin{equation}
        u\geq(n-1)\log r-(n-1)\log C_1\qquad\text{on}\ \ M\backslash\{o\}.
    \end{equation}
    Finally, by \cite[Theorem 2.1]{huiskenilmanen}, $u$ solves the IMCF in $M\backslash\{o\}$. The theorem then follows.
\end{proof}

\section{Approximation of IMCF with outer obstacle}

The goal of this section is to prove Theorem \ref{thm-intro:obstacle} regarding the convergence of $p$-harmonic functions with Dirichlet conditions. For the reader's convenience, we restate it here:

\begin{theorem}\label{thm-phar:main}
	Let $\Omega\Subset M$ be a smooth domain, and $E_0\Subset\Omega$ be a $C^{1,1}$ domain. Let $v_p$ be the unique solution of the equation
	\begin{equation}\label{eq-phar:vp}
		\left\{\begin{aligned}
			& \Delta_p v_p=0\qquad\text{in }\Omega\backslash\overline{E_0}, \\
			& v_p=1\qquad\text{on }\p E_0, \\
			& v_p=0\qquad\text{on }\p\Omega,
		\end{aligned}\right.
	\end{equation}
	and set $u_p=(1-p)\log v_p$. Then, as $p\to1$, $u_p$ converges in $C^0_{\loc}(\Omega)$ to the unique solution of IMCF in $\Omega$ with initial value $E_0$ and outer obstacle $\p\Omega$, as given by \cite[Theorem 1.6]{Xu_2024_obstacle}.
\end{theorem}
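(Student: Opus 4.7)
The approach is three-step: first establish uniform-in-$p$ local $C^1$ bounds for $u_p$ on compact subsets of $\Omega\setminus\bar E_0$, allowing extraction of a $C^0_{\loc}$ subsequential limit $u$; next, via Moser's observation $u$ is a weak IMCF in $\Omega\setminus\bar E_0$ with $u|_{\partial E_0}=0$; and finally, identify $u$ with the outer obstacle solution $\bar u$ of \cite[Theorem 1.6]{Xu_2024_obstacle} via a two-sided comparison built on Theorem \ref{thm-intro:BPP} and the soft obstacle construction of \cite{Xu_2024_obstacle}.

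For compactness, the lower bound $u_p\ge 0$ is immediate from $v_p\le 1$. For a local upper bound on compact $K\Subset\Omega\setminus\bar E_0$, I would construct a positive $p$-harmonic subsolution $\underline v_p\le v_p$ of the form $\underline v_p=\alpha_p\bigl(\gr_p^B-\max_{\partial\Omega}\gr_p^B\bigr)$, where $\gr_p^B$ is the Dirichlet $p$-Green kernel on an ambient ball $B\supset\bar\Omega$ with pole $o\in E_0$, and $\alpha_p>0$ is chosen so that $\underline v_p\le 1$ on $\partial E_0$. The comparison principle then gives $v_p\ge\underline v_p$, and the stable asymptotics of $\gr_p^B(x)^{p-1}$ as $p\to 1^+$ (via \cite[Lemma 4.1]{ajm} together with Harnack chaining as in Proposition \ref{teo_sobolev_inf_2}), combined with $d(o,x)<\mathrm{dist}(o,\partial\Omega)$ for $x\in K$, yield $\underline v_p(x)\ge e^{-C(K)/(p-1)}$ on $K$, whence $u_p\le C(K)$. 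The Kotschwar--Ni interior gradient estimate \cite[Lemma 2.17]{ajm} supplies $|\nabla u_p|\le C(K)$, and Arzel\`a--Ascoli produces the limit $u=\lim u_{p_i}\in C^0_{\loc}(\Omega\setminus E_0)$.

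For the two-sided comparison identifying $u=\bar u$: for $u\ge\bar u$, embed $\bar\Omega$ into a complete manifold $M'$ by attaching a conic end so that Theorem \ref{thm-intro:main_euc_isop} yields a proper IMCF $u'$ on $M'$ with initial $E_0$. Theorem \ref{thm-intro:BPP} applied to $(M',u',\bar\Omega)$ produces $p$-IMCFs $\tilde u_p$ on $\bar\Omega\setminus E_0$ with $\tilde u_p\to u'|_{\bar\Omega}$ in $C^0$, and since $\tilde u_p<+\infty=u_p$ on $\partial\Omega$ the $p$-IMCF comparison principle yields $\tilde u_p\le u_p$, hence $u'|_\Omega\le u$ in the limit. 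Shrinking the conic extension down to $\bar\Omega$ produces a monotone family $u'_k\nearrow\bar u$ (the construction of $\bar u$ from \cite{Xu_2024_obstacle}), giving $\bar u\le u$. The reverse inequality $u\le\bar u$, which encodes the outer obstacle tangency, is the main obstacle: here one exploits the minimality of $v_p$ (precisely the content of the Dirichlet condition $v_p|_{\partial\Omega}=0$) together with the soft-obstacle approximation in \cite{Xu_2024_obstacle}, which builds $\bar u$ from above as a limit of penalized proper IMCFs $\bar u^\eta$ whose BPP $p$-approximations $\tilde u_p^\eta$ dominate $u_p$ on compact subsets of $\Omega$; the nested limits $p\to 1$ and then $\eta\to 0$ give $u\le\bar u$. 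Once the equality is established, uniqueness of $\bar u$ in \cite[Theorem 1.6]{Xu_2024_obstacle} upgrades the subsequential convergence to convergence of the full family $u_p$ as $p\to 1$.
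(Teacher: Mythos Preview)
Your overall strategy---compactness, Moser's observation, then a two-sided comparison with the obstacle solution $\bar u$---is sound, and your argument for $u\ge\bar u$ is essentially the paper's: one produces proper IMCFs $w_i$ on slightly enlarged, complete manifolds (the paper uses the specific soft-obstacle construction from \cite{Xu_2024_obstacle} rather than a conic end), applies Theorem \ref{thm-intro:BPP} to obtain $p$-IMCFs $f_i$ close to $w_i$, and then uses the minimality of $v_p$ to conclude $u_{p_{j_i}}\ge f_i$, hence $\tilde u\ge u_i\to\bar u$ in the limit.

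The gap is in your treatment of $u\le\bar u$. The minimality of $v_p$ (equivalently, the Dirichlet condition $v_p|_{\partial\Omega}=0$) only yields \emph{lower} bounds for $u_p$: if $\tilde v$ is any positive $p$-harmonic function on $\Omega\setminus\bar E_0$ with $\tilde v|_{\partial E_0}=1$, then $v_p\le\tilde v$, i.e.\ $u_p\ge(1-p)\log\tilde v$. There is no mechanism to obtain $p$-IMCFs that \emph{dominate} $u_p$, because $u_p=+\infty$ on $\partial\Omega$ defeats any comparison in that direction. In particular, your claim that BPP approximations $\tilde u_p^\eta$ of penalized IMCFs ``dominate $u_p$ on compact subsets'' cannot be established by minimality. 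Moreover, the obstacle solution in \cite{Xu_2024_obstacle} is not built ``from above'' as a decreasing limit; it is constructed via elliptic regularization on enlarged domains and is characterized as the \emph{maximal} weak IMCF in $\Omega$ with initial value $E_0$ (see \cite[Theorem 6.1(iv)]{Xu_2024_obstacle}). The paper uses exactly this maximality: once $\tilde u$ is known to be a weak IMCF in $\Omega$ with initial value $E_0$, the inequality $\tilde u\le\bar u$ is immediate. So replace your $u\le\bar u$ paragraph with a one-line appeal to maximality, and the proof goes through.
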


The proof makes essential use of Theorem \ref{thm-intro:BPP}. It is crucial for us that this theorem produces a family of $p$-IMCFs, rather than just a subsequence as $p \to 1$. To prove Theorem \ref{thm-phar:main}, we need to further recall the construction of IMCF with outer obstacle in \cite{Xu_2024_obstacle}. Let $\Omega,E_0$ be as in Theorem \ref{thm-phar:main}. Let $d(x)$ be the signed distance function from $\p\Omega$, such that it is negative in $\Omega$ and positive in $M\backslash\Omega$. For $\delta\in\RR$, denote the domain
\[\Omega_\delta=\big\{x\in M: d(x)<\delta\big\}.\]
In \cite[Subsection 6.1]{Xu_2024_obstacle}, a family of functions $\big\{\psi_\delta\in C^\infty(\Omega_\delta)\big\}_{0<\delta\ll1}$ is constructed so that:
\begin{enumerate}[nosep]
    \item $\psi_\delta\equiv0$ on $\Omega$,
    \item $\psi_\delta\to+\infty$ near $\p\Omega_\delta$,
    \item the new metric $g'_\delta=e^{2\psi_\delta/(n-1)}g$ on $\Omega_\delta$ is complete (note that $g'_\delta=g$ in $\Omega$).
\end{enumerate}
In \cite[Lemma 6.3]{Xu_2024_obstacle} the following was shown: for each $\delta\ll1$ and $\lambda\in(0,1)$, there is a constant $\epsilon(\delta,\lambda)$ so that for all $0<\epsilon\leq\epsilon(\delta,\lambda)$, there is a solution $u_{\epsilon,\delta,\lambda}$ of the following elliptic regularized equation
\begin{equation}\label{eq-phar:ellreg}
    \left\{\begin{aligned}
    & \operatorname{div}_{g'_\delta}\Big(\frac{\D_{g'_\delta}u_{\epsilon,\delta,\lambda}}{\sqrt{\epsilon^2+|\D_{g'_\delta}u_{\epsilon,\delta,\lambda}|^2}}\Big)=\sqrt{\epsilon^2+|\D_{g'_\delta}u_{\epsilon,\delta,\lambda}|^2}\qquad\text{on $\Omega_{\lambda\delta}\backslash\overline{E_0}$}, \\
    & u_{\epsilon,\delta,\lambda}=0\qquad\text{on $\p E_0$}.\\
    & u_{\epsilon,\delta,\lambda}=\underline u_\delta-2\qquad\text{on $\p\Omega_{\lambda\delta}$},
\end{aligned}\right.
\end{equation}
where $\underline u_\delta$ is defined in \cite[Lemma 6.2]{Xu_2024_obstacle}. We omit the detailed construction of $\psi_\delta$ and $\underline u_\delta$ here since it does not play any role in the proof below.

Moreover, in \cite[Lemma 6.4, Proposition 6.5\,--\,6.7]{Xu_2024_obstacle} the following is shown: for any sequences $\delta_i\to0$, $\lambda_i\to 1$, $\eps_i\leq\eps(\delta_i,\lambda_i)$ with $\epsilon_i\to0$, there exists a sequence of solutions $u_i=u_{\epsilon_i,\delta_i,\lambda_i}$ of \eqref{eq-phar:ellreg}, such that a subsequence of $u_i$ converges in $C^0_{\loc}(\Omega\backslash E_0)$ to the unique weak IMCF with initial value $E_0$ and outer obstacle $\p\Omega$.

\begin{proof}[Proof of Theorem \ref{thm-phar:main}] 
    It is sufficient to show that for any sequence $p\to1$, there is a subsequence so that the corresponding $u_p$ converges to the IMCF with outer obstacle (since the limit solution is a unique object, this shows that the entire family of $u_p$ also converges).

    The convergence of $u_p$ up to subsequence follows from the interior gradient estimate \cite{kotschwarni} and the Arzela-Ascoli theorem. Let $\tilde u=\lim_{j\to\infty}u_{p_j}$ be a subsequential limit, where $p_j\to1$ as $j\to\infty$. By the classical argument of Moser \cite{moser} (see also \cite{kotschwarni,ajm}), $\tilde u$ is a weak IMCF in $\Omega$ with initial value $E_0$. Then let $u$ the unique weak IMCF in $\Omega$ with initial value $E_0$ and obstacle $\p\Omega$, given by \cite[Theorem 6.1]{Xu_2024_obstacle}. Our goal is to show that $\tilde u=u$. By \cite[Theorem 6.1(iv)]{Xu_2024_obstacle}, namely the maximality of $u$, it suffices to show that $\tilde u\geq u$ in $\Omega\backslash E_0$.

    Fix a sequence $\delta_i\to0$. Recall the domain $\Omega_{\delta_i}$ and metric $g'_i=g'_{\delta_i}$ defined above. By \cite[Lemma 6.2]{Xu_2024_obstacle} (see also \cite[Lemma 2.10]{Xu_2024_obstacle}), the domain $(\Omega_{\delta_i},g'_i)$ admits a proper subsolution of IMCF. Therefore, there is a proper solution $w_i$ of IMCF in $(\Omega_{\delta_i},g'_i)$ with initial value $E_0$. Let $\lambda_i\in(0,1)$ and $\epsilon_i\ll1$, so that a solution $u_i=u_{\epsilon_i,\lambda_i,\delta_i}$ in \eqref{eq-phar:ellreg} exists. By Huisken-Ilmanen's elliptic regularization, we may choose $\lambda_i$ sufficiently close to 1 (depending on $i,\delta_i$) and then $\epsilon_i$ sufficiently small (depending on $i,\delta_i,\lambda_i$), such that
    \begin{equation}\label{eq-phar:aux1}
	\|u_i-w_i\|_{C^0(\Omega\backslash E_0)}\leq i^{-1}.
    \end{equation}
    By Theorem \ref{thm-intro:BPP} (applied to $\Omega$ in the new manifold $(\Omega_{\delta_i},g'_i)$), for each $i$ there exists a large enough index $j_i$ and a function $f_i\in\Lip(\bar\Omega\backslash E_0)$, such that
    \begin{equation}\label{eq-phar:aux2}
        f_i|_{\p E_0}=0,\qquad \exp\Big(\frac{f_i}{1-p_{j_i}}\Big)\ \ \text{is $p_{j_i}$-harmonic,}\qquad \|w_i-f_i\|_{C^0(\Omega\backslash E_0)}\leq i^{-1}.
    \end{equation}
    Note that the function $v_{p_{j_i}}$ in \eqref{eq-phar:vp} is the minimal $p_{j_i}$-harmonic function in $\Omega$ that takes value $1$ on $\p E_0$. Hence $v_{p_{j_i}}\leq\exp\big(f_i/(1-p_{j_i})\big)$, which implies
    \begin{equation}\label{eq-phar:aux3}
        u_{p_{j_i}}\geq f_i\qquad\text{in}\ \ \Omega\backslash E_0.
    \end{equation}
    Combining \eqref{eq-phar:aux1} \eqref{eq-phar:aux2} \eqref{eq-phar:aux3}, we obtain
    \[u_{p_{j_i}}\geq u_i-2i^{-1}\qquad\text{in}\ \ \Omega\backslash E_0.\]
    Since $\lim_{i\to\infty}u_{p_{j_i}}=\tilde u$ as well as $\lim_{i\to\infty}u_i=u$, it follows that $\tilde u\geq u$ in $\Omega\backslash E_0$.
\end{proof}

\appendix
\section{Asymptotic of the $p$-Green's kernel near the pole}\label{sec:green}

We prove Theorem \ref{teo_localsingular} by adapting the argument in \cite{kichenveron}, see also \cite[pp. 243-251]{veron}. First, by \cite{Serrin_2} it holds 
\begin{equation}\label{boundserrin}
   \gr \asymp \mu(r) \qquad \text{as } \, r(x) \to 0.
\end{equation}
Fix $R_0>0$ such that $B_{2R_0} \doteq B_{2R_0}(o) \subset \Omega$ and does not intersect $\cut(o)$, and let $\bar \kappa, \hat \kappa >0$ satisfy
$$
-\hat \kappa^2 \le \Sect \le \bar{\kappa}^2 \qquad \text{on } \, B_{2R_0},
$$
where $B_{2R_0}^*$ is the ball with the origin $o$ removed. Up to reducing $R_0$, we can suppose that $4R_0< \pi/\bar \kappa$. Let $\bar \gr, \hat \gr$ be the Green kernels of the geodesic ball $B_{2R_0}$ in a space form of curvature, respectively, $\bar \kappa^2$ and $- \hat \kappa^2$, transplanted to $M$. Take polar coordinates $(s,\theta) : \BB_{2R_0}^* \subset \R^n \ra B_{2R_0}^*$, and let
$$
g = \di s^2 + s^2 g_{\alpha\beta}(s,\theta) \di \theta^\alpha \di \theta^\beta
$$
be the corresponding expression of the metric $g$. For $R \in (0, R_0]$, consider the function
\begin{equation}\label{def_function}
\frac{\gr(x) - \|\gr\|_{L^\infty(\partial B_{R})}}{\bar \gr(x)}
\end{equation}
on $\BB_{R_0}^*$, and define
$$
\gamma : (0, R] \ra \R, \qquad \ol\gamma(s) \doteq \max \left\{ \frac{\gr(x) - \|\gr\|_{L^\infty(\partial B_{R})}}{\bar \gr(x)} \ : \ r(x) \in [s,R] \right\}.
$$
Since $\gr$ diverges at $o$, by the maximum principle $\ol\gamma$ is non-increasing, $\ol\gamma(R) \le 0$ while $\ol\gamma(s) >0$ for $s$ small enough, thus by comparison $\ol\gamma(s)$ is attained on $\partial B_s$. Indeed, if by contradiction this does not hold, setting
$$
C = \max_{\partial B_s} \left\{ \frac{\gr - \|\gr\|_{L^\infty(\partial B_R)}}{\bar \gr}\right\} >0
$$
for $\eps>0$ small enough the set
$$
U_\eps \doteq \left\{ x : r(x) \in B_R \backslash \overline{B}_s, \ \ \frac{\gr(x) - \|\gr\|_{L^\infty(\partial B_R)}}{\bar \gr(x)} > (1+\eps)C \right\}
$$
would be non-empty, and by the Laplacian comparison theorem
$$
\left\{
\begin{array}{l}
\disp \Delta_p  \big[\gr - \|\gr\|_{L^\infty(\partial B_R)}\big] = 0 \ge \Delta_p \big[(1+\eps)C\bar \gr\big] \qquad \text{on } \, U_\eps, \\[0.2cm]
\gr - \|\gr\|_{L^\infty(\partial B_R)} = (1+\eps)C\bar \gr \qquad \text{on } \, \partial U_\eps.
\end{array}
\right.
$$
by comparison, $\gr - \|\gr\|_{L^\infty(\partial B_R)} \le (1+\eps)C\bar \gr$ on $U_\eps$, contradiction.\par
Define
$$
\gamma^* = \limsup_{x \ra o} \frac{\gr(x)}{\bar \gr(r(x))} = \lim_{s \ra 0} \ol \gamma(s) \in (0,\infty).
$$
For $\lambda \in (0,1)$ define the dilation
\begin{equation}\label{def_Tlambda}
T_\lambda : \BB_{\frac{R_0}{\lambda}}^* \ra \BB_{R_0}^*, \qquad T_\lambda(t,\theta) = (\lambda t, \theta),
\end{equation}
and the rescaled metric $g_\lambda \doteq \lambda^{-2} T_\lambda^*g = \di t^2 + t^2g_{\alpha\beta}(\lambda t, \theta) \di \theta^\alpha \di \theta^\beta$ on $\BB_{R_0/\lambda}^*$. Note that, by homogeneity, $\gr_\lambda = \gr \circ T_\lambda$ solves
$$
\Delta_{p,\lambda} \gr_\lambda = 0 \qquad \text{on } \, \BB^*_{R_0/\lambda},
$$
where $\Delta_{p, \lambda}$ is the $p$-Laplacian with respect to $g_\lambda$. For a fixed $R \in (0, R_0)$ set
$$
u_\lambda(r,\theta) = \frac{\gr(\lambda r,\theta)}{\mu(\lambda R)}, \qquad \text{on } \, \BB^*_{R_0/\lambda},
$$
with $\mu$ the Green kernel on $\R^n$ as in \eqref{def_mup}. For $j \in \mathbb{N}$, $j \ge 2$ and each $\lambda$  small enough that $jR < R_0/\lambda$, because of \eqref{boundserrin} we then deduce
\begin{equation}\label{esti_udasopra}
|u_\lambda(r,\theta)| \le C_{j,p} \qquad \text{on } \, \BB_{jR} \backslash \BB_{\frac{R}{j}},
\end{equation}
for some constant $C_{j,p}$ depending on $j,p$ but not on $\lambda$. Since $g_\lambda$ converges to the Euclidean metric $g_0$ locally smoothly on $\R^m$, and since
\begin{equation}\label{equa_ulambda}
\Delta_{p,\lambda} u_\lambda = 0 \qquad \text{on } \, \BB^*_{R_0/\lambda},
\end{equation}
by elliptic estimates
\begin{equation}\label{elli_esti_C1beta}
\|u_\lambda\|_{C^{1,\beta_j}} \le \hat C_{j,p} \qquad \text{on } \, \BB_{jR} \backslash \BB_{\frac{R}{j}},
\end{equation}
for some $\beta_j \in (0,1)$. Therefore, for each fixed $j$ $\{u_\lambda\}$ has a convergent subsequence in $C^{1, \beta_j/2}(\BB_{jR} \backslash \BB_{\frac{R}{j}})$. By a diagonal argument, $u_\lambda$ subconverges in $C^1_\loc$ (and, on each fixed compact set, in an appropriate $C^{1,\beta}$) to some $u_0$ solving $\Delta_{p,0} u_0 = 0$ on $\R^n \backslash \{0\}$.\\[0.2cm]
\noindent \textbf{Step 1: } $\gr \sim \gamma^*\mu(r)$ as $r \ra 0$.\\[0.2cm]
Let $\{\lambda_k\}_{k \in \mathbb{N}}$, $\lambda_k \downarrow 0$ be a sequence such that $u_{\lambda_k} \ra u$ in $C^1_\loc(\R^n \backslash \{0\})$, and define $s_k = \lambda_k R$. Suppose that $k$ is large enough that $\ol \gamma(s_k)>0$. Let $x_k = (s_k, \theta_k) \in \partial \BB_{s_k}$ be a point where the function defined in \eqref{def_function} attains its maximum $\ol \gamma(s_k)$ over $\overline{B}_{R}\backslash B_{s_k}$. Write $x_k = T_{\lambda_k} y_k$, with $y_k = ( R, \theta_k)$. By compactness, $y_k \looparrowright y_0 = (R, \theta_0)$ as $k \ra \infty$. Letting $k \ra \infty$ along such subsequence and noting that $\bar \gr(s_k) \sim \mu(s_k)$ for $p \le n$ we deduce
\begin{equation}\label{first}
\begin{array}{lcl}
u_0(R, \theta_0) & = & \disp \lim_{k \ra \infty} u_0(R, \theta_k) = \lim_{k \ra \infty} u_{\lambda_k}(R, \theta_k) \\[0.4cm]
& = & \disp \lim_{k \ra \infty} \frac{\gr(s_k, \theta_k)}{\mu(s_k)} = \lim_{k \ra \infty} \ol \gamma(s_k) = \gamma^*.
\end{array}
\end{equation}
On the other hand, for each $(r,\theta)$ fixed and for $k$ large enough that $\lambda_k < R/r$,
\begin{equation}\label{second}
\begin{array}{lcl}
u_0(r, \theta) & = & \disp \lim_{k \ra \infty} u_{\lambda_k}(r, \theta) = \lim_{k \ra \infty} \frac{\gr(\lambda_k r, \theta)}{\mu(\lambda_k R)} \\[0.4cm]
& = & \disp \lim_{k \ra \infty} \frac{\gr(s_k r/R, \theta)}{\mu(s_k)} = \lim_{k \ra \infty} \frac{\gr(s_k r/R, \theta)}{\bar \gr(s_kr/R)} \cdot \frac{\mu(s_kr/R)}{\mu(s_k)} \\[0.5cm]
& \le & \disp \gamma^* \lim_{k \ra \infty} \frac{\mu(s_kr/R)}{\mu(s_k)} = \left\{ \begin{array}{ll} \gamma^* \frac{\mu(r)}{\mu(R)} & \quad \text{if } \, p<n \\[0.3cm]
\gamma^* & \quad \text{if } \, p=n.
\end{array}\right.
\end{array}
\end{equation}
Therefore, $\Delta_{p,0} u_0 =0$ on $\R^n \backslash \{0\}$ and
$$
\begin{array}{llll}
\text{if $p<n$,} & \disp \quad u_0(r,\theta) \le \gamma^* \frac{\mu(r)}{\mu(R)} \qquad \text{on } \, \R^n \backslash \{0\}, & \quad u_0(r,\theta) = \gamma^* \frac{\mu(r)}{\mu(R)} \quad \text{at $y_0$}, \\[0.4cm]
\text{if $p=n$,} & \disp \quad u_0(r,\theta) \le \gamma^* \qquad \text{on } \, \R^n \backslash \{0\}, & \quad u_0(r,\theta) = \gamma^* \quad \text{at $y_0$}.
\end{array}
$$
By the strong maximum principle (\cite[Prop. 3.3.2]{tolksdorf2}; a version directly applicable to the manifold setting can be found in \cite[Thm. 2.5.2]{pucciserrin} and \cite[Thm. 1.2]{puccirigoliserrin}),
\begin{equation}\label{coseuo}
u_0(r,\theta) = \left\{ \begin{array}{ll}
\gamma^* \mu(r)/\mu(R) & \quad \text{if } \, p <n \\[0.3cm]
\gamma^* & \quad \text{if } \, p=n.
\end{array}\right.
\end{equation}
The uniqueness of any subsequential limit shows that the entire family $\{u_\lambda\}$ locally converges in $C^1$ to $u_0$ as $\lambda \ra 0$. Consequently, if $p<n$
$$
\disp \frac{\gamma^*}{\mu(R)} =  \disp \lim_{\lambda \ra 0} \frac{u_\lambda(r,\theta)}{\mu(r)} = \lim_{\lambda \ra 0} \frac{\gr(\lambda r,\theta)}{\mu(\lambda R)\mu(r)} = \lim_{\lambda \ra 0} \frac{\gr(\lambda r,\theta)}{\mu(\lambda r)\mu(R)} = \frac{1}{\mu(R)} \lim_{s \ra 0} \frac{\gr(s,\theta)}{\mu(s)},
$$
and if $p=n$
$$
\disp \gamma^* = \disp \lim_{\lambda \ra 0} u_\lambda(r,\theta) = \lim_{\lambda \ra 0} \frac{\gr(\lambda r,\theta)}{\mu(\lambda R)} = \lim_{\lambda \ra 0} \frac{\gr(\lambda r,\theta)}{\mu(\lambda r)} = \lim_{s \ra 0} \frac{\gr(s,\theta)}{\mu(s)}.
$$
Hence, $\gr(s,\theta) \sim \gamma^*\mu(s)$ as $s \ra 0$, as claimed.\\[0.2cm]
\noindent \textbf{Step 2:} it holds
$$
\gamma^* \hat \gr \le \gr \le \|\gr\|_{L^\infty(\partial B_R)} + \gamma^*\bar \gr \qquad \text{on } \, B_R^*(o).
$$
\emph{Proof:} Because of Step 1, for $\eps>0$ we can apply the comparison theorem to $\gr$, $(\gamma^*-\eps)\hat \gr$ and $(\gamma^*+\eps)\bar \gr$ to deduce
$$
(\gamma^*-\eps)\hat \gr \le \gr \le \|\gr\|_{L^\infty(\partial B_R)} + (\gamma+\eps)\bar \gr,
$$
and let $\eps \ra 0$.\\[0.2cm]
\noindent \textbf{Step 3:} it holds
$$
|\nabla \gr - \gamma^* \mu'(r) \nabla r| = o\Big(|\mu'(r)|\Big) \qquad \text{as } \, r \ra 0.
$$
\emph{Proof: } From the convergence $u_\lambda \ra u_0$ in $C^1_\loc$, we deduce
\begin{equation}\label{convegradiente}
\| \bar \nabla u_\lambda - \bar \nabla u_0 \|_{L^\infty( \BB_{jR} \backslash \BB_{R/j})} = o_j(1)
\end{equation}
as $\lambda \ra 0$, where $o_j(1)$ is a quantity depending on $j$ which vanishes as $\lambda \ra 0$, and $\bar \nabla$ denotes the Euclidean gradient.\\[0.2cm]
\noindent \textbf{The case $p<n$}.\\
From \eqref{convegradiente} we get
\begin{equation}\label{normzero}
\left[\partial_r u_\lambda - \gamma^* \frac{\partial_r\mu}{\mu(R)}\right]^2 + \frac{1}{r^2}g^{\alpha\beta}(0,\theta)(\partial_\alpha u_\lambda)(\partial_\beta u_\lambda) \ra 0
\end{equation}
locally uniformly on $(\R^n)^*$. Now, for $r \in [R/j, jR]$ we get
$$
\begin{array}{l}
\disp \frac{1}{r^2}\left|g^{\alpha\beta}(0,\theta)(\partial_\alpha u_\lambda)(\partial_\beta u_\lambda) - g^{\alpha\beta}(\lambda r,\theta)(\partial_\alpha u_\lambda)(\partial_\beta u_\lambda)\right| \\[0.5cm]
\qquad \le \disp \frac{j^2}{R^2}\left|g^{\alpha\beta}(\lambda r,\theta)- g^{\alpha\beta}(0,\theta)\right| \left|(\partial_\alpha u_\lambda)(\partial_\beta u_\lambda)\right|
\end{array}
$$
and, since $\partial_\alpha u_0 = 0$, the right-hand side goes to zero uniformly on $\BB_{jR}\backslash \BB_{R/j}$ as $\lambda \ra 0$. It follows from \eqref{normzero} that
\begin{equation}\label{normzero_2}
\partial_r u_\lambda - \gamma^* \frac{\partial_r \mu}{\mu(R)} \ra 0, \qquad \frac{1}{r^2}g^{\alpha\beta}(\lambda r,\theta)(\partial_\alpha u_\lambda)(\partial_\beta u_\lambda) \ra 0
\end{equation}
locally uniformly on $\BB_{jR} \backslash \BB_{R/j}$, as $\lambda \ra 0$. Setting $s = \lambda r$, and using the identity
\begin{equation}\label{ide_mu}
(\partial_s \mu)(s) = - \frac{ n-p}{p-1} \frac{\mu(s)}{s},
\end{equation}
we infer
$$
\partial_r u_\lambda  = \disp \frac{(\partial_s \gr)(\lambda r ,\theta)}{\mu(\lambda R)} \lambda =  \frac{(\partial_s \gr)(\lambda r ,\theta)}{(\partial_s \mu)(\lambda r)}\left[ - \frac{n-p}{p-1} \frac{\mu(r)}{r\mu(R)}\right].
$$
Therefore, from the first in \eqref{normzero_2} we deduce
$$
0 \leftarrow \frac{(\partial_s \gr)(\lambda r ,\theta)}{(\partial_s \mu)(\lambda r,\theta)}\left[ - \frac{n-p}{p-1} \frac{\mu(r)}{r}\right] - \gamma^* \partial_r \mu,
$$
as $\lambda \ra 0$, uniformly on $\BB_{jR}\backslash \BB_{R/j}$, that is, substituting $\lambda r =s$,
\begin{equation}\label{radial_pmm}
0 = \lim_{s \ra 0} \frac{(\partial_s \gr)(s,\theta)}{(\partial_s \mu)(s)} + \gamma^* \frac{r \partial_r \mu}{\mu(r)} \frac{p-1}{n-p} = \lim_{s \ra 0} \frac{(\partial_s \gr)(s,\theta)}{(\partial_s \mu)(s)} - \gamma^*
\end{equation}
as $s \ra 0$. Analogously, from the second in \eqref{normzero_2} we obtain as $\lambda \ra 0$
$$
0 \leftarrow \disp \frac{1}{r^2}g^{\alpha\beta}(s,\theta)(\partial_\alpha u_\lambda)(\partial_\beta u_\lambda) = \frac{1}{r^2 \mu^2(\lambda r)} \frac{\mu^2(r)}{\mu^2(R)} g^{\alpha\beta}(s,\theta)(\partial_\alpha \gr)(\partial_\beta \gr),
$$
and using \eqref{ide_mu},
$$
0 = \lim_{s \ra 0} \frac{1}{(\partial_s \mu)^2(s)} \frac{1}{s^2} g^{\alpha\beta}(s,\theta)(\partial_\alpha \gr)(\partial_\beta \gr).
$$
This, together with \eqref{radial_pmm} proves Step 3.\\[0.2cm]
\noindent \textbf{The case $p=n$.}\\[0.2cm]
Computations are analogous to those for $p<n$, but we need more refined gradient estimates to conclude. See \cite[pp.248-251]{veron} for a complete proof, and we leave to the reader its implementation in a manifold setting.\\[0.2cm]

\noindent \textbf{Step 4:} $\gamma^*=1$.\\
Fix $\eps$ such that $B_{\eps}(o) \Subset \Omega$. We use the weak definition of $\Delta_p \gr = -\delta_o$ with a radial test function $\phi \in C^\infty_c(B_{\eps}(o))$ such that $\phi(o)=1$ to obtain
$$
1 = \phi(o) = \disp \int_\Omega |\nabla \gr|^{p-2} \langle \nabla \gr, \nabla \phi \rangle = \int_{B_\eps} |\nabla \gr|^{p-2} (\partial_r \gr)(\partial_r\phi).
$$
From Step 4, we obtain
$$
\begin{array}{l}
\disp 1 = \int_{B_\eps} |\nabla \gr|^{p-2} (\partial_r \gr)(\partial_r\phi) \\[0.5cm]
\qquad \qquad = \disp \big(1+o_\eps(1)\big)\int_0^\eps (\gamma^*)^{p-1} |\mu'(r)|^{p-2} \mu'(r) \phi'(r) \omega_{m-1} r^{m-1} \di r = (\gamma^*)^{p-1}(1+o_\eps(1)),
\end{array}
$$
and the thesis follows by letting $\eps \ra 0$.\\[0.2cm]

\noindent \textbf{Step 5:} it holds
$$
\left| \nabla^2 \gr - \nabla^2 \mu(r) \right| = o \Big( |\mu''(r)| \Big) \qquad \text{as } \, r \ra 0.
$$
Because of Step 3, having fixed $j \in \mathbb{N}$ choose $\lambda_j$ small enough that
\begin{equation}\label{gradi_vicinozero}
\left| \frac{\nabla \gr(s,\theta)}{\mu'(s)} \right| \in \left[ \frac{1}{2}, 2\right] \qquad \text{on } \, B^*_{R\lambda_j/j}.
\end{equation}
Rearranging \eqref{equa_ulambda}, by homogeneity $u_\lambda$ also solves the linear PDE
$$
\diver_{g_\lambda} \left( \left| \frac{\nabla \gr_\lambda}{\mu'(\lambda R)} \right|^{p-2} \nabla u_\lambda \right) = 0,
$$
that in view of \eqref{gradi_vicinozero} is uniformly elliptic on $\BB^*_{jR}\backslash \BB_{j/R}^*$ for every $\lambda < \lambda_j$, with uniform $C^{0, \beta}$ estimates on its coefficients because of \eqref{elli_esti_C1beta}. Via Schauder's estimates we therefore improve the local $C^{1,\beta}$ estimates to
$$
\|u_\lambda\|_{C^{2,\beta_j}} \le \hat C_{j,p,R} \qquad \text{on } \, \BB_{jR} \backslash \BB_{\frac{R}{j}},
$$
for some $\beta_j \in (0,1)$ and a constant now also depending on $R$, The proof now follows as in Step 3, and is left to the reader.

\addcontentsline{toc}{section}{References}

\bibliographystyle{amsplain}

\end{document}